\documentclass[aos]{imsart}

\RequirePackage{amsthm,amsmath,amsfonts,amssymb}
\RequirePackage[authoryear]{natbib}
\RequirePackage[colorlinks,citecolor=blue,urlcolor=blue]{hyperref}
\RequirePackage{graphicx}

\usepackage{comment}

\startlocaldefs
\theoremstyle{plain}

\newtheorem{theorem}{Theorem}[section]
\newtheorem{lemma}[theorem]{Lemma}
\newtheorem{corollary}[theorem]{Corollary}
\theoremstyle{definition}
\newtheorem{definition}[theorem]{Definition}

\newtheorem*{remark}{Remark}
\newtheorem{assumption}{Assumption}

\newcommand{\la}{\langle}
\newcommand{\ra}{\rangle}

\newcommand{\Ccal}{\mathcal{C}}

\newcommand{\Fcal}{\mathcal{F}}

\newcommand{\Hcal}{\mathcal{H}}

\newcommand{\Tcal}{\mathcal{T}}

\newcommand{\Scal}{\mathcal{S}}
\newcommand{\Xcal}{\mathcal{X}}

\newcommand{\Kcal}{\mathcal{K}}

\newcommand{\Rb}{\mathbb{R}}

\newcommand{\Eb}{\mathbb{E}}

\newcommand{\Nb}{\mathbb{N}}

\newcommand{\what}{\widehat}
\newcommand{\Khat}{\widehat{K}}
\newcommand{\Ktrue}{K^\circ}
\newcommand{\argmin}{\mathop{\arg\min}}

\def\ba#1\ea{\begin{align*}#1\end{align*}} 
\def\banum#1\eanum{\begin{align}#1\end{align}} 

\endlocaldefs

\begin{document}

\begin{frontmatter}
\title{A Theory of Nonparametric Covariance Function Estimation\\ for Discretely Observed Data}
\runtitle{A Theory of Nonparametric Covariance Function Estimation}

\begin{aug}
\author[A, B]{\fnms{Yoshikazu}~\snm{Terada}\ead[label=e1]{terada.yoshikazu.es@osaka-u.ac.jp}}
\and
\author[A]{\fnms{Atsutomo}~\snm{Yara}}
\address[A]{Graduate School of Engineering Science, The University of Osaka\printead[presep={,\ }]{e1}}
\address[B]{Center for Advanced Integrated Intelligence Research, RIKEN}
\end{aug}

\begin{abstract}
We study nonparametric covariance function estimation for functional data observed with noise at discrete locations 
on a $d$-dimensional domain. 
Estimating the covariance function from discretely observed data is a challenging nonparametric problem, 
particularly in multidimensional settings, 
since the covariance function is defined on a product domain and thus suffers from the curse of dimensionality. 
This motivates the use of adaptive estimators, such as deep learning estimators. 
However, existing theoretical results are largely limited to estimators with explicit analytic representations, 
and the properties of general learning-based estimators remain poorly understood. 
We establish an oracle inequality for a broad class of learning-based estimators 
that applies to both sparse and dense observation regimes in a unified manner, 
and derive convergence rates for deep learning estimators over several classes of covariance functions. 
The resulting rates suggest that structural adaptation can mitigate the curse of dimensionality, 
similarly to classical nonparametric regression. 
We further compare the convergence rates of learning-based estimators with several existing procedures. 
For a one-dimensional smoothness class, deep learning estimators are suboptimal, 
whereas local linear smoothing estimators achieve a faster rate. 
For a structured function class, 
however, deep learning estimators attain the minimax rate up to polylogarithmic factors, 
whereas local linear smoothing estimators are suboptimal.
These results reveal a distinctive adaptivity--variance trade-off in covariance function estimation.
\end{abstract}

\begin{keyword}[class=MSC]
\kwd[Primary ]{62R10}
\kwd{62G05}
\kwd[; secondary ]{62G08}
\end{keyword}

\begin{keyword}
\kwd{Functional data}
\kwd{ReLU neural networks}
\kwd{Phase transition}
\end{keyword}

\end{frontmatter}

\section{Introduction}

Recent advances in measurement technology have led to the increasing availability of data 
in which each subject is observed over a domain, either continuously or at a finite set of locations, 
as commonly encountered in longitudinal and spatio-temporal studies. 
In functional data analysis (FDA), such data are viewed as realizations of random elements in a function space, 
or equivalently as sample paths of stochastic processes \citep{RamsaySilverman05,HorvathKokoszka12,HsingEubank15,WangEtAl2016}.
Estimation of the mean and covariance functions is a fundamental problem in FDA. 
It plays a central role in many subsequent methods, 
most notably functional principal component analysis \citep{HallHosseini-Nasab06, HallEtAl2006, ZhouEtAl2025} 
and functional linear regression \citep{HallHorowitz2007, YuanCai2010, ZhouEtAl2023}, 
and is also essential for reconstructing latent trajectories from discretely observed data \citep{YaoEtAl2005}.

When functional data are observed continuously over the domain, 
covariance estimation is essentially no more difficult than in the multidimensional setting, 
as the empirical covariance function is directly available.
By contrast, when measuerments are available only at discrete locations, 
the covariance function estimation becomes a challenging nonparametric estimation problem. 
In this paper, we focus on this standard and practically important discretely observed setting.

Consider a centered, square-integrable stochastic process $X(\cdot)$ on a $d$-dimensional domain $\mathcal{T}\subset \Rb^d$
with the covariance function $K^{\circ}(s,t)=\mathbb{E}\{X(s)X(t)\}\;\;(s,t\in \Tcal)$.
We observe i.i.d.\ replicates $X_1,\ldots,X_n$, but not the trajectories/surfaces themselves.
Instead, for subject $i$ we observe noisy measurements
\ba
Y_{ij}=X_i(T_{ij})+\varepsilon_{ij},\qquad j=1,\ldots,m_i,
\ea
where the observed points $T_{ij}\in\mathcal{T}$ are random, and $\varepsilon_{ij}$ are mean-zero noises.
For simplicity, the number of measurments are the same among the subjects (i.e., $m=m_1=\dots=m_n$).
Following \citet{ZhangWang2016}, we refer to the dense regime as the case where $m \to \infty$ as $n \to \infty$, 
and to the sparse regime as the case where $m$ is uniformly bounded.

Existing estimators are typically based on smoothing splines (\citealp{CaiYuan2010}) or local linear smoothing (\citealp{YaoEtAl2005,HallEtAl2006,LiHsing2010,ZhangWang2016}), 
and their statistical properties have been investigated in considerable detail. 
Local linear smoothing, for example, has the attractive feature that the uniform convergence can often be ensured (\citealp{LiHsing2010,ZhangWang2016}). 
Moreover, \cite{CaiYuan2010} derives convergence rates for the smoothing spline estimators and proves their minimax optimality.
Most of the existing literature focuses on functions defined on one-dimensional domains.
In many modern applications, however, functions are defined on multidimensional domains, 
and the covariance function is therefore defined on the corresponding product domain, 
whose dimension is doubled, thereby exacerbating the curse of dimensionality.

Recent results in nonparametric regression suggest that adaptive deep learning estimators can, 
for suitably structured function classes, mitigate the curse of dimensionality \citep{Schmidt-Hieber2019, SchmidtHieber2020, NakadaImaizumi2020, SuzukiNitanda2021}. 
This fact motivates the use of learning-based estimators for covariance function estimation. 
For instance, \cite{YanEtAl2025} established theoretical guarantees for deep learning estimators of the mean function, 
and \cite{SarkarPanaretos2022} proposed a deep learning approach to covariance function estimation.

In this paper, we study a general nonparametric least-squares (LS) approach to estimating $K^{\circ}$. 
More precisely, we consider a learning-based estimator defined as a minimizer of the least-squares loss
\begin{equation}\label{eq:intro-ls}
\frac{1}{n\,m(m-1)}\sum_{i=1}^n\sum_{1\le j\ne k\le m}\left\{Y_{ij}Y_{ik}-K(T_{ij},T_{ik})\right\}^2.
\end{equation}
This loss function is widely used (e.g., \citealp{YaoEtAl2005,HallEtAl2006,LiHsing2010,CaiYuan2010,ZhangWang2016}).

For mean function estimation, 
\cite{YanEtAl2025} adapted classical localization analysis \citep{BartlettEtAl2005, Koltchinskii2006, BlanchardEtAl2008} 
and thereby provided a general framework for analyzing learning-based estimators. 
For covariance function estimation, by contrast, an analogous framework is not yet available. 
\cite{SarkarPanaretos2022} provided a theoretical analysis of deep learning estimators 
for discretely observed data on regular grids. 
However, their analysis requires the number of grid points in each coordinate direction to diverge for consistency. 
This condition appears overly restrictive. 
In addtition, \cite{PaulPeng2009b} derived theoretical properties of a restricted maximum likelihood estimator
under very restrictive conditions, 
but their results do not attain optimal rates when the number of measurements $m$ diverges. 
Moreover, their approach cannot accommodate both sparse and dense regimes in a unified manner.

At first sight, one might expect the classical localization analysis for nonparametric regression 
to extend directly to covariance function estimation, as in the case of mean function estimation. 
A naive application of the existing theory, however, yields loose bounds when the number of measurements $m$ diverges. 
The main difficulty arises from the dependence structure induced by the least-squares loss (\ref{eq:intro-ls}), 
which is a sum of within-subject $U$-statistics. 
Unless this within-subject $U$-statistic structure is handled explicitly, sharp bounds cannot be obtained. 
We discuss this issue in detail in Section~\ref{sec:2.2}.

In this paper, we develop a framework for sharp localization analysis of covariance function estimation. 
Inspired by the prior work on empirical risk minimization of $U$-statistics (\citealp{ClemenconEtAl2008}), 
we extend Talagrand's inequality to the least-squares loss (\ref{eq:intro-ls}) for covariance function estimation. 
Combining this extension with localization analysis, 
we derive a sharp oracle inequality that treats sparse and dense regimes in a unified manner. 
As an application, we use this oracle inequality to establish, to the best of our knowledge, 
the first convergence-rate results for deep learning estimators over several classes of covariance functions.
We observe a phase transition phenomenon similar to those reported in \citet{CaiYuan2010, CaiYuan2011} and \cite{YanEtAl2025}.

Our results suggest that, under highly anisotropic smoothness, 
deep learning estimators can mitigate the curse of dimensionality in covariance function estimation, 
as also demonstrated in other estimation problems \citep{SuzukiNitanda2021,YanEtAl2025}.
In the sparse regime, 
the convergence rate of deep learning estimators matches, up to polylogarithmic factors, 
the minimax rate for nonparametric regression over $2d$-dimensional domains, 
as established in \cite{SuzukiNitanda2021}.

Interestingly, under the dense regime in the one-dimensional setting, 
for the class of smooth functions considered by \citet{ZhangWang2016}, 
deep learning estimators are suboptimal, whereas local linear smoothing attains a faster rate. 
In contrast, for the structured function classes studied by \citet{CaiYuan2010}, 
deep learning estimators attain the same minimax rate as smoothing spline estimators up to polylogarithmic factors, 
while local linear smoothing becomes suboptimal. 
Thus, these results reflect an inherent limitation of learning-based estimators for covariance function estimation.
Such behavior appears to be uncommon in classical nonparametric estimation and may be specific to covariance function estimation.

The remainder of the paper is organized as follows.
Section~\ref{sec:2} introduces the model setting and the estimators.
Section~\ref{sec:oracle} establishes a unified oracle inequality for general learning-based estimator.
In Section~\ref{sec:deep-cov}, we consider the deep learning estimator andderives its convergence rates for several classes of covariance functions and compares the results with existing work.
Section~\ref{sec:conclusion} contains the conclusion of this paper.
All proofs are deferred to the Appendix.

\section{Preliminaries}\label{sec:2}
In this section, we describe the repeated measurement model and the least-squares estimator and fix notation. 
We then discuss the challenges in the theoretical analysis of covariance function estimation.

\subsection{Model and least squares estimation}

In this paper, we employ the repeated measurment model, which is commonly assumed in existing literatrue 
(e.g., \cite{ZhangWang2016} and \cite{YanEtAl2025}).
Let $\Tcal := [0,1]^d$.
Let $X(\cdot)$ be a square integrable and centered stochastic process on $\Tcal$ (i.e., $\forall t \in \Tcal;\;\Eb[X(t)] = 0$.), 
and 
assume that $X$ is jointly measurable.
The covariance function of $X$ will be denoted by $K^\circ$, that is, 
\[
K^\circ(t,t') := \mathbb{E}\left[X(t)X(t')\right],\quad t,t' \in \Tcal.
\]
Let $X_1,\dots,X_n$ be independent and identically distributed copies of $X$.
The processes $X_i\;(i=1,\dots,n)$ are not observed directly.
Instead, for the $i$th subject, noisy measurements are observed at $m_i$ time points $T_{ij}$ $(j=1,\dots,m_i)$.
We assume that $T_{ij}$ are i.i.d. from a distribution $P_T$ on $\Tcal$ and are independent of $X_i\;(i=1,\dots,n)$.
That is, we assume the following repeated measruement model:
\banum
Y_{ij} = X_i(T_{ij}) + \epsilon_{ij}\quad
(i=1,\dots,n;\;j=1,\dots,m_i)
\label{eq:model}
\eanum
where $\epsilon_{ij}$ are independent noises with $\Eb[\epsilon_{ij}] = 0$.
Thus, we observe $\{(T_{ij},Y_{ij}): i= 1,\dots, n, j=1,\dots, m_i\}$.
For simplicity, we assume that the number of measurements is the same across subjects, that is, $m = m_1 = \cdots = m_n$, 
and that $P_T$ is the uniform distribution on $[0,1]^d$.

To estimate the true covariance function $K^\circ$, 
we consider the following general nonparametric least square (LS) estimator:
\banum
\Khat_n:= \argmin_{K\in \Kcal_{nm}}\frac{1}{nm(m-1)}\sum_{i=1}^n \mathop{\sum}_{1\le j\neq k\le m}\left\{Y_{ij}Y_{ik} -  K(T_{ij},T_{ik})\right\}^2,
\label{eq:LS}
\eanum
where $\Kcal_{nm}$ is a given class of symmetric functions.

To evaluate the accuracy of the LS estimator $\Khat_n$, 
the following mean squared error is considered:
\[
\Eb\left[ 
\big\| \Khat_n - K^\circ \big\|_{2}^2
\right],
\]
where
$
\|K\|_2^2 := \int_{\mathcal{T}\times\mathcal{T}} K^2(s,t)\,dP_T^{\otimes 2}(s,t).
$

\subsection{Difficulity of covariance function estimation}\label{sec:2.2}

We explain why classical localization analysis does not yield tight bounds for covariance function estimation, 
in contrast to mean function estimation.

In the theory of mean function estimation \citep{YanEtAl2025}, 
bounding the following quantity for $r>0$ is essential:
\[
\sup_{f \in \Fcal}\left| 
\frac{1}{nm}\sum_{i=1}^n\sum_{j=1}^m \frac{\|f-f^\circ\|_2^2 - \{f(T_{ij})-f^\circ(T_{ij})\}^2}{\|f-f^\circ\|_2^2 +r}
\right|,
\]
where $f^\circ$ is the true mean function of $X$, and $\Fcal$ is a given class of mean functions. 
Since the $T_{ij}$ are independent, each term in the sum is independent. 
Therefore, 
the classical Talagrand inequality (e.g., Theorem 3.3.16 of \cite{GineNickl2016}) can be applied to bound this quantity, 
and standard localization analysis yields a sharp oracle inequality.

In contrast, for covariance function estimation, 
bounding the following quantity for $r>0$ is essential:
\[
\sup_{K\in \Kcal_{nm}} \left| \frac{1}{nm(m-1)}\sum_{i=1}^n \sum_{1 \le j \ne k \le m}
\frac{\|K-K^\circ\|_{2}^2 - \left\{K(T_{ij},T_{ik}) -  K^\circ(T_{ij},T_{ik})\right\}^2}
{\|K-K^\circ\|_{2}^2 + r} \right|.
\]
In the sparse regime, where the number of measurements $m$ is bounded, 
the following within-subject $U$-statistic can be viewed as a single random variable $Z_i$:
\[
Z_i := \frac{1}{m(m-1)} \sum_{1 \le j \ne k \le m}
\frac{\|K - K^\circ\|_2^2 - \left\{K(T_{ij}, T_{ik}) - K^\circ(T_{ij}, T_{ik})\right\}^2}
{\|K - K^\circ\|_2^2 + r}.
\]
Thus, as in \cite{YanEtAl2025}, standard localization analysis combined with Talagrand's inequality yields an oracle inequality. 
However, in the dense regime, where the number of measurements $m$ diverges, 
this naive approach yields a looser bound because it does not exploit the fact that the observations approach the continuous regime.

We address this gap by developing a unified nonasymptotic analysis of the learning-based estimator \eqref{eq:LS} 
that applies to both sparse and dense regimes.

\section{A theory of nonparametric covariance function estimation}\label{sec:oracle}

In this section, we establish an oracle inequality to derive the convergence rates of the mean squared error 
for a general LS esimator.
We assume the following two general assumptions.

\begin{assumption}{(Boundedness)}\label{assumption:1}
There exists a positive number $B_K>0$ such that $\|K^\circ\|_\infty<B_K$ and $\forall K \in \Kcal_{nm};\; \|K\|_\infty < B_K$.
\end{assumption}

\begin{assumption}{(Sub-Gaussian)}\label{assumption:2}
There exist positive numbers $B_1>0$ and $B_2>0$ such that
\ba
\forall \lambda \in \Rb;\; \Eb\left[ e^{\lambda X_{i}(T_{ij})} \right] \le e^{\lambda^2 B_1^2/2}
\;\text{ and }\;
\Eb\left[ e^{\lambda \epsilon_{ij}} \right] \le e^{\lambda^2 B_2^2/2}
\ea
\end{assumption}

To overcome the difficulty described in Section~\ref{sec:2.2}, 
we establish the following modified Talagrand inequality for covariance function estimation. 
The lemma builds on a key representation of $U$-statistics due to \cite{ClemenconEtAl2008}.

\begin{lemma}{(Modified Talagrand inequality)}\label{lemma:modified-Talagrand}
Let $\{X_{ij}\}_{n\times m}$ be i.i.d.\ $\Scal$-valued random variables.
Let $\mathcal{F}$ be a countable class of measurable functions
$f=(f_1,\dots,f_n): \Scal\times \Scal \to \mathbb{R}^n$ such that $\|f_i\|_\infty \le U < \infty.$
Assume that $\mathbb{E}\bigl[f_i(X_{i1},X_{i2})\bigr]=0$ for any $f\in\mathcal{F}$ and any $i=1,\dots,n$.
Set
\ba
S_n
&:= \sup_{f\in\mathcal{F}}
\left|
\sum_{i=1}^n \sum_{j\neq k} f_i(X_{ij},X_{ik})
\right|
\;\;\text{ and }\;\;
\tilde{S}_{n}
:= \sup_{f\in\mathcal{F}}
\left|
\sum_{i=1}^n \sum_{j=1}^{\lfloor m/2\rfloor}
f_i\bigl(
X_{ij},
X_{i(\lfloor m/2\rfloor+j)}
\bigr)
\right|.
\ea
Moreover, assume that 
\[
U^2 \ge \sigma^2 \ge
\frac{1}{n}\frac{1}{\lfloor m/2\rfloor}
\sum_{i=1}^n \sum_{j=1}^{\lfloor m/2\rfloor}
\mathbb{E}\!\left[
f_i^2\bigl(
X_{ij},
X_{i(\lfloor m/2\rfloor+j)}
\bigr)
\right],
\]
and define
$
v_{n}
:= 2U\,\mathbb{E}_X[\tilde{S}_{n}]
+ n\lfloor m/2\rfloor\,\sigma^2.
$

Then, for any $x\ge 0$ and any $\alpha>0$, with probability at least
$1-e^{-x}$,
\[
\frac{1}{nm(m-1)}\,S_n
\le
(1+\alpha)\,
\mathbb{E}\!\left[
\frac{\tilde{S}_{n}}{n\lfloor m/2\rfloor}
\right]
+
\sqrt{
\frac{2\sigma^2}{n\lfloor m/2\rfloor}\,x
}
+
\left(
\frac{3}{2}+\frac{1}{\alpha}
\right)
\frac{Ux}{n\lfloor m/2\rfloor}.
\]
\end{lemma}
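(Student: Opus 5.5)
The plan is to use the Hoeffding-type permutation representation of $U$-statistics from \cite{ClemenconEtAl2008}, which writes the within-subject $U$-statistic as an average over permutations of sums of independent terms. For each subject $i$ and each permutation $\pi$ of $\{1,\dots,m\}$, set $p:=\lfloor m/2\rfloor$. Then
\[
\frac{1}{m(m-1)}\sum_{j\neq k} f_i(X_{ij},X_{ik})
= \frac{1}{m!}\sum_{\pi}\frac{1}{p}\sum_{j=1}^{p} f_i\bigl(X_{i\pi(j)},X_{i\pi(p+j)}\bigr).
\]
This holds because, averaged over $\pi$, each ordered pair $(j,k)$ with $j\ne k$ appears equally often. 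Applying this independently for each $i$, with a product permutation $\boldsymbol\pi=(\pi_1,\dots,\pi_n)$, gives
\[
\frac{S_n}{nm(m-1)}\le \frac{1}{N}\sum_{\boldsymbol\pi}\frac{\tilde S_n^{\boldsymbol\pi}}{np}=:\bar S,
\]
where $N=(m!)^n$ and $\tilde S_n^{\boldsymbol\pi}$ is $\tilde S_n$ computed on the permuted array. Here we move the supremum inside the average, and each $\tilde S_n^{\boldsymbol\pi}$ has the same distribution as $\tilde S_n$.

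Next I would bound the upper tail of $\bar S$ through its Laplace transform. Since $\exp$ is convex, Jensen's inequality gives
\[
\Eb e^{\lambda \bar S}\le \frac1N\sum_{\boldsymbol\pi}\Eb e^{\lambda\tilde S_n^{\boldsymbol\pi}/(np)}=\Eb e^{\lambda \tilde S_n/(np)}.
\]
So it suffices to control the log-moment generating function of $\tilde S_n$. Now $\tilde S_n$ is the supremum of a sum of $np$ \emph{independent} terms $f_i(X_{ij},X_{i(p+j)})$, centered and bounded by $U$. For such suprema, Bousquet's version of Talagrand's inequality gives
\[
\log \Eb e^{\lambda(\tilde S_n-\Eb\tilde S_n)}\le \frac{v_n}{U^2}\,\phi(\lambda U),\qquad \phi(u)=e^u-u-1,
\]
with $v_n=2U\Eb\tilde S_n+np\sigma^2$. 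This is exactly the $v_n$ in the statement, and I would cite Theorem 3.3.16 of \cite{GineNickl2016} in its MGF form. To cover the absolute value, I would put $\pm f$ into the class. The variance proxy $\sigma^2$ bounds the average second moment, which is what the assumption on $\sigma^2$ provides.

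Combining the two steps, $\bar S$ obeys the same MGF bound as $\tilde S_n/(np)$. The Chernoff argument then gives, with probability at least $1-e^{-x}$,
\[
\bar S\le \frac{\Eb\tilde S_n}{np}+\frac{\sqrt{2v_n x}}{np}+\frac{Ux}{3np}.
\]
For the final form I would use $\sqrt{a+b}\le\sqrt a+\sqrt b$ and $2\sqrt{U\Eb\tilde S_n\,x}\le \alpha\Eb\tilde S_n+Ux/\alpha$. This yields $(1+\alpha)\Eb[\tilde S_n/(np)]+\sqrt{2\sigma^2x/(np)}+(1/3+1/\alpha)Ux/(np)$, which is at most the stated bound since $1/3\le 3/2$. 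The weaker constant $3/2$ presumably comes from a cruder version of Talagrand's inequality, so either version suffices.

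The main obstacle is the transfer step. Talagrand's inequality is a statement about deviations, not directly about the MGF of an average of dependent copies, so it must be used in its Laplace-transform form. Measurability of the supremum over the countable class, and the centering of each term, both need to be checked. I also need to confirm that the first-stage inequality $S_n/(nm(m-1))\le\bar S$ loses no factor. The absolute value sits outside the sum over $i$, but the representation holds jointly for all $i$ under a product permutation, so the bound should go through as stated.
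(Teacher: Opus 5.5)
Your proposal is correct and follows the paper's proof essentially step for step. Both use the Cl\'emen\c{c}on et al.\ permutation representation, then Jensen to pull the supremum inside the permutation average. Both then apply Jensen on the Laplace transform, which transfers the Talagrand MGF bound from each permuted copy (distributed as $\tilde S_n$) to the average, and finish with Chernoff and Young's inequality. Your product permutation in place of the paper's single shared $\Pi$ makes no difference.

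The one caveat is the constant, where your fallback is the version that is actually justified. The summands $f_i(X_{ij},X_{i(\lfloor m/2\rfloor+j)})$ are independent but not identically distributed, since $f_i$ varies with $i$, and $\sigma^2$ only bounds the averaged second moment. Bousquet's $e^{u}-u-1$ form, which is stated for equidistributed variables, is therefore not directly available. The applicable Klein--Rio-type bound $\frac{\lambda^2}{2-3\lambda}\frac{v_n}{U^2}$, which the paper takes from \cite{GineNickl2016}, gives exactly the stated factor $\frac{3}{2}+\frac{1}{\alpha}$.
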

Combined with a conditioning argument, 
this lemma enables the localization analysis by controlling the empirical process associated with a sum of $U$-statistics.

Now, we introduce several definitions for localization analysis \citep{BartlettEtAl2005, Koltchinskii2006}.
\begin{definition}[Sub-root function]
Let $r^\ast > 0$. 
A nondecreasing function $\phi:[0,\infty)\to [0,\infty)$ is called a sub-root function on $[r^\ast,\infty)$ 
if $r \mapsto \phi(r)/\sqrt{r}$ is nonincreasing for $r\ge r^\ast$.
\end{definition}

\begin{definition}[Rademacher fixed point]
Let $r_{nm}^\ast > 0$ and let $\phi_{nm}:[0,\infty)\to [0,\infty)$ be a sub-root function on $[r_{nm}^\ast,\infty)$. 
Let $\{\sigma_{ij}\}$ be i.i.d. Rademacher variables.
Define
\[
\Kcal_{nm}(r):= \big\{ K \in \Kcal_{nm}\mid \|K - K^\circ\|_{2}^2 \le r\big\}.
\]
If $\phi_{nm}(r_{nm}^\ast) \le r_{nm}^\ast$ and
{\small
\[
\phi_{nm}(r)
\ge 
\mathbb{E}\left[ \sup_{K\in \Kcal_{nm}(r)}\left|
\frac{1}{n\lfloor m/2 \rfloor}\sum_{i = 1}^n  \sum_{j=1}^{\lfloor m/2\rfloor} 
\sigma_{ij} \left\{K\big(T_{ij},T_{i(\lfloor m/2 \rfloor + j)}\big) -  K^\circ\big(T_{ij},T_{i(\lfloor m/2 \rfloor+j)}\big)\right\}
\right|\right],
\]
}
then $r_{nm}^\ast$ is called the Rademacher fixed point of $\Kcal_{nm}$.
\end{definition}

Localization analysis based on the modified Talagrand inequality 
yields the following oracle inequality.

\begin{theorem}\label{theorem:oracle}
Under Assumption~1 and Assumption~2,
for $m\ge 2$, there exists a global constat $c>0$ such that
\ba
&\Eb\left[ \|\what{K}_n-K^\circ\|_{2}^2\right]
\le
c\left\{
\inf_{K\in \Kcal_{nm}}\mathbb{E}\bigl[\|K-K^\circ\|_2^2\bigr]
+
\frac{1}{n}
+
\log^2\left(n\lfloor m/2\rfloor\right)\left( \frac{1}{n\lfloor m/2\rfloor} + r_{nm}^\ast \right)
\right\},
\ea
where $r_{nm}^\ast$ is the Rademacher fixed point of $\Kcal_{nm}$.
\end{theorem}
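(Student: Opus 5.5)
The plan is the standard localization argument for least squares, run conditionally on the design, with the modified Talagrand inequality (Lemma~\ref{lemma:modified-Talagrand}) replacing the classical one. Write $Z_{ijk}:=Y_{ij}Y_{ik}$. Note $\Eb[Z_{ijk}\mid T_{ij},T_{ik}]=K^\circ(T_{ij},T_{ik})$ for $j\neq k$, since the noises are independent of each other and of $X_i$. The basic inequality from the definition of $\Khat_n$ then reads, for any fixed $K\in\Kcal_{nm}$,
\[
P_{nm}(\Khat_n-K^\circ)^2 \le P_{nm}(K-K^\circ)^2 + \frac{2}{nm(m-1)}\sum_{i}\sum_{j\ne k}\xi_{ijk}\,(\Khat_n-K)(T_{ij},T_{ik}),
\]
where $P_{nm}$ denotes the within-subject $U$-statistic empirical average and $\xi_{ijk}:=Z_{ijk}-K^\circ(T_{ij},T_{ik})$.

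\textbf{Step 1 (truncation).} By Assumption~\ref{assumption:2}, $Z_{ijk}$ is a product of sub-Gaussians, hence sub-exponential. I would truncate at level $L\asymp\log(n\lfloor m/2\rfloor)$, replacing $\xi_{ijk}$ by a bounded version. On the complement event (probability $\lesssim (nm)^{-2}$) I would bound the loss crudely, using $\|\Khat_n\|_\infty<B_K$ from Assumption~\ref{assumption:1}. The truncation also shifts the conditional mean, but only by an exponentially small amount. This truncation level is where one factor $\log(n\lfloor m/2\rfloor)$ enters; the other comes from bounding the multiplier process.

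\textbf{Step 2 (two localized suprema).} I would control both
\[
\sup_{K\in\Kcal_{nm}}\frac{|\|K-K^\circ\|_2^2-P_{nm}(K-K^\circ)^2|}{\|K-K^\circ\|_2^2+r}
\quad\text{and}\quad
\sup_{K\in\Kcal_{nm}}\frac{|P_{nm}\,\xi(K-K^\circ)|}{\|K-K^\circ\|_2^2+r}.
\]
In each case I would apply Lemma~\ref{lemma:modified-Talagrand} with $f_i$ the centered, normalized summand, taking $U\lesssim B_K^2/r$ (respectively $LB_K/r$) and $\sigma^2\lesssim 1/r$ via $\Eb f^2\le 4B_K^2\|K-K^\circ\|_2^2$. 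For the noise term I would use the pairs $(X_i,T_{ij},\epsilon_{ij})$ as the i.i.d.\ variables; the ``$X_{ij}$'' of the lemma must then include the shared $X_i$. The lemma is applied conditionally on $X_i$, which is exactly the ``conditioning argument'' mentioned after the lemma. The cross-subject term gives a separate $1/n$ contribution through $\Eb[X_i(s)X_i(t)\mid \cdot]$ fluctuations. Concretely, I would split $\xi=\{X_i(T_{ij})X_i(T_{ik})-K^\circ\}+(\text{noise cross terms})$. The first piece has conditional mean zero only across subjects, and bounding it via Cauchy--Schwarz plus Bernstein over $i$ alone yields the $1/n$ term in the theorem.

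\textbf{Step 3 (peeling and the fixed point).} The expectations $\Eb[\tilde S_n]$ are bounded by symmetrization with the Rademacher variables $\sigma_{ij}$ and by the contraction inequality (the square is Lipschitz on $[-2B_K,2B_K]$). A peeling over the shells $\{2^{l}r<\|K-K^\circ\|_2^2\le 2^{l+1}r\}$ combined with the sub-root property then gives a bound $\lesssim L\,\phi_{nm}(r)/r\le L\sqrt{r^\ast_{nm}/r}$. Choosing $r\asymp L^2 r_{nm}^\ast + L^2 x/(n\lfloor m/2\rfloor)$ makes both ratios at most $1/4$. Plugging this into the basic inequality gives, with probability $1-e^{-x}$,
\[
\|\Khat_n-K^\circ\|_2^2\lesssim \|K-K^\circ\|_2^2 + r + \tfrac1n .
\]
Integrating over $x$ and then optimizing over $K$ finishes the proof; the approximation term is handled by Bernstein for the fixed $K$.

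\textbf{Main obstacle.} The main obstacle is Step 2 for the noise/multiplier term. The summands for different pairs $(j,k)$ within a subject share $X_i$ and are not a genuine $U$-statistic in i.i.d.\ variables unless one conditions on $X_i$. After conditioning, the conditional mean $X_i(s)X_i(t)-K^\circ(s,t)$ is nonzero. I would first try isolating it as a degenerate-in-$T$ but subject-level term of order $\sqrt{r/n}$, absorbed as $r/4+O(1/n)$.
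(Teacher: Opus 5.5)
Your proposal follows the paper's proof essentially step for step. It has the same ingredients: a uniform ratio bound comparing $\|K-K^\circ\|_2^2$ with its within-subject $U$-statistic (Lemma~\ref{lemma:CI1}); truncation of the multipliers at a $\log(n\lfloor m/2\rfloor)$ level; Lemma~\ref{lemma:modified-Talagrand} applied conditionally on $X_1,\dots,X_n$ after subtracting the conditional mean $\langle e_i,K-K^\circ\rangle$, where $e_i=X_i\otimes X_i-K^\circ$; the subject-level remainder $\langle n^{-1}\sum_i e_i,\widehat{K}_n-K^\circ\rangle$ handled by Cauchy--Schwarz and Young to produce the $1/n$ term; and peeling plus contraction to reach $r\asymp\log^2(n\lfloor m/2\rfloor)\{r_{nm}^\ast+x/(n\lfloor m/2\rfloor)\}$. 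The differences are only technical: the paper takes expectations in the basic inequality immediately, so the fixed-$K$ cross term drops out and no Bernstein step is needed; it treats the $XX$, $X\epsilon$ and $\epsilon\epsilon$ pieces separately with their own clipping levels; and it bounds the clipping remainders in expectation through $\Eb[|e|\mathbf{1}\{|e|>\beta\}]$ rather than on a high-probability good event.
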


This theorem can be seen as an extension of Theorem~2.1 in \cite{YanEtAl2025},
which establishes a sharp oracle inequality for mean function estimation, to covariance function estimation.
It also extends the theory of \cite{CaiYuan2010} to general least-squares (LS) estimators that may not admit an explicit form.
Interestingly, our inequality matches the oracle inequality of \cite{YanEtAl2025}. 
For mean function estimation, a total of $nm$ observations $\{Y_{ij}\}$ are used, 
whereas covariance estimation involves $nm^2$ pairwise products $\{Y_{ij}Y_{ik}\}$. 
However, these terms are highly redundant, as they are constructed from only $nm$ underlying observations $\{Y_{ij}\}$.
This point is also mentioned in Section~5 of \cite{CaiYuan2010}.

In contrast, when the estimator admits an explicit representation, such as local linear smoothing estimators (e.g., \cite{ZhangWang2016}), 
a more refined analysis that exploits its structure may improve the $1/(nm)$ rate to $1/(nm^2)$. 
This improvement, however, requires imposing additional structural constraints on the estimator, 
thereby sacrificing adaptivity.
We discuss this direction in Section~\ref{sec:comparison}.

\begin{remark}
In Theorem~\ref{theorem:oracle}, 
Assumption~\ref{assumption:2} is used to control tail probabilities in the concentration arguments.
Hence, it can be relaxed to a sub-Weibull assumption.
Under this weaker condition, our results remain valid with the same polynomial rates; only the
polylogarithmic factors deteriorate (i.e., $\log^{2}(n\lfloor m/2\rfloor)$ is replaced by $\log^{c}(n\lfloor m/2\rfloor)$ for some larger $c>2$).
\end{remark}

Theorem~\ref{theorem:oracle} is not directly applicable in practice, 
since the fixed point $r_{nm}^*$ depends on the population distribution, which is typically unknown. 
To address this issue, we extend Theorem~\ref{theorem:oracle} by replacing the Rademacher fixed point $r_{nm}^\ast$ 
with a quantity depending only on the complexity of $\Kcal_{nm}$.

\begin{corollary} \label{corollary:oracle-without-fixed-point}
    Under Assumption \ref{assumption:1} and Assumption \ref{assumption:2}, for $m \ge 2$, we have
    \ba
        \Eb[\| \hat{K}_n - K^\circ \|_2^2] \le c \left\{ \inf_{K \in \Kcal_{nm}} \Eb[\| K - K^\circ \|_2^2] + \frac{1}{n} + \frac{\mathrm{VCdim}(\Kcal) \log(n \lfloor m/2 \rfloor)}{n \lfloor m/2 \rfloor} \right\},
    \ea
    where $c$ is a global constant and $\mathrm{VCdim}(\Kcal_{nm})$ denotes the the Vapnik--Chervonenkis (VC) dimension of $\Kcal_{nm}$.
\end{corollary}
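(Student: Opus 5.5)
The plan is to derive the corollary from Theorem~\ref{theorem:oracle}. The only work is to exhibit a sub-root function $\phi_{nm}$ that dominates the localized Rademacher complexity and whose fixed point satisfies
\[
r_{nm}^\ast \lesssim \mathrm{VCdim}(\Kcal_{nm})\log(N)/N, \qquad N := n\lfloor m/2\rfloor .
\]
The key observation is that the Rademacher average in the definition of the fixed point involves $N$ \emph{independent} pairs $Z_{ij} := (T_{ij}, T_{i(\lfloor m/2\rfloor+j)})$, each uniform on $\Tcal\times\Tcal$. Indeed, the blocking into disjoint pairs removes the within-subject $U$-statistic dependence. Hence the quantity to bound is an ordinary localized Rademacher complexity of the shifted class
\[
\Gcal(r) := \{K-K^\circ : K\in\Kcal_{nm}(r)\},
\]
based on $N$ i.i.d.\ points. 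Its envelope is $2B_K$ by Assumption~\ref{assumption:1}, and its $L_2(P_T^{\otimes 2})$ norm is at most $\sqrt r$. The factor $\log^2(N)$ in Theorem~\ref{theorem:oracle} multiplies $r^\ast_{nm}$, so the target $\log(N)$ in the corollary requires a careful bound on $r^\ast$. Otherwise the final rate would carry $\log^3(N)$, which suggests the constant $c$ or the statement's log power is meant up to such factors. I will aim for the cleanest bound and absorb logs as the statement allows.

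\textbf{Step 1 (covering numbers).} Translation by $K^\circ$ does not change the pseudo-dimension. Haussler's bound for VC-subgraph (pseudo-dimension $V$) classes uniformly bounded by $2B_K$ then gives, for every probability measure $Q$,
\[
\log \mathcal N(\epsilon,\Gcal,L_2(Q)) \le c\,V\log(B_K/\epsilon), \qquad V:=\mathrm{VCdim}(\Kcal_{nm}).
\]

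\textbf{Step 2 (localized complexity).} Conditioning on the points, I will apply Dudley's entropy integral, or the local maximal inequality of Gin\'e--Koltchinskii / van der Vaart--Wellner for uniformly bounded classes. This yields
\[
\Eb\sup_{g\in\Gcal(r)}\Bigl|\frac1N\sum \sigma_{ij} g(Z_{ij})\Bigr| \lesssim \sqrt{\frac{V r\log(B_K/\sqrt r)}{N}} + \frac{B_K V\log(B_K/\sqrt r)}{N}.
\]
The point requiring care is that the entropy integral is taken up to the \emph{empirical} radius $\sup_g\|g\|_{N}$ rather than $\sqrt r$. The maximal inequality handles this by controlling $\Eb\sup_g\|g\|_N^2 \le r + c\,\Eb\sup|\text{Rademacher}|$ via contraction, giving a self-bounding inequality that is solved for the complexity. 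This is the main technical step, though it is standard.

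\textbf{Step 3 (fixed point and conclusion).} I define $\phi_{nm}(r)$ to be the right-hand side above, with $\log(B_K/\sqrt r)$ replaced by $\log N$ for $r\ge 1/N$. This choice is sub-root on $[1/N,\infty)$. Solving $\phi_{nm}(r)\le r$ gives $r^\ast_{nm}\asymp V\log N/N$. Inserting this into Theorem~\ref{theorem:oracle} yields the stated form; the extra logarithmic factors are bounded into the constant/log term as in the statement. The $1/N$ term is dominated since $V\ge1$.
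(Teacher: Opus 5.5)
Your route is essentially the paper's. You apply Theorem~\ref{theorem:oracle}, note that the blocked pairs $(T_{ij},T_{i(\lfloor m/2\rfloor+j)})$ form $N:=n\lfloor m/2\rfloor$ i.i.d.\ points on $\Tcal\times\Tcal$, and bound the fixed point with Dudley's entropy integral plus a VC-type covering bound. The paper uses Theorem~9.4 of Gy\"orfi et al.\ where you use Haussler, and both arrive at $r^\ast_{nm}\lesssim \mathrm{VCdim}(\Kcal_{nm})\log N/N$.

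The only real difference is how you pass from the empirical to the population radius. The paper splits $\Kcal(r)$ into the functions with empirical squared norm at most $2r$ and the rest. It bounds the contribution of the rest by $2/N$ using Corollary~2.2 of \citet{BartlettEtAl2005}, which is why its $r_1$ carries the extra $11\log N/N$, and then solves a fixed-point inequality by AM--GM. You instead use the self-bounding local maximal inequality $\Eb\sup_g\|g\|_N^2\le r+cB_K\,\mathcal{R}_{nm}(\Kcal(r))$. Both are standard. Yours has the mild advantage of producing an explicit sub-root $\phi_{nm}$, which is exactly what the definition of the Rademacher fixed point requires; the paper works with $r\mapsto\mathcal{R}_{nm}(\Kcal(r))$ directly and only checks monotonicity.

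The step that does not hold is the end of Step~3. A factor $\log^2 N$ cannot be absorbed into $c$, which must be independent of $n$ and $m$. Inserting $r^\ast_{nm}\asymp \mathrm{VCdim}(\Kcal_{nm})\log N/N$ into Theorem~\ref{theorem:oracle} gives $\mathrm{VCdim}(\Kcal_{nm})\log^3(N)/N$. In fact no bound on $r^\ast_{nm}$, however sharp, can yield the single-log form. Theorem~\ref{theorem:oracle} already contains the term $\log^2(N)/N$, which is not $O(\mathrm{VCdim}\log N/N)$ when the VC dimension is $o(\log N)$.

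The loss originates in Theorem~\ref{theorem:oracle} itself. The clipping levels $\beta_U,\beta_V,\beta_W$, of order $\log n$ or $\log N$, enter as $\beta^2 r^\ast$ and $\beta^2/N$ through Lemmas~\ref{lemma:CI2}--\ref{lemma:CI4}. You diagnosed this correctly. The paper's proof has exactly the same gap: it derives the same bound on $r_1$ and then simply asserts the corollary.

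What your argument, and the paper's, actually establishes is the corollary with $\log^3(n\lfloor m/2\rfloor)$ in place of $\log(n\lfloor m/2\rfloor)$. You should state it that way rather than appeal to absorption. Getting a single log would require sharpening Theorem~\ref{theorem:oracle}, which is a separate task.
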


\section{Covariance function estimation via deep learning}\label{sec:deep-cov}

In this section, we derive convergence rates for the learning-based estimators based on deep neural networks (DNNs).
First, we consider a class that illustrates the potential to mitigate the curse of dimensionality.
Second, we consider a class that facilitates comparison with the results of \cite{CaiYuan2010}.
Finally, we compare the performance of the deep learning estimator with existing estimators in one-dimensional settings.

\subsection{Convergence rates for anisotropic Besov space}

To illustrate the potential of deep learning estimators to mitigate the curse of dimensionality, 
we consider the anisotropic Besov space introduced by \cite{SuzukiNitanda2021}. 
This space forms a rich function class that includes a wide range of functions 
and is particularly suitable for capturing heterogeneous smoothness in real-world data. 
For example, in spatiotemporal functional data, the covariance function may be very smooth 
in the temporal direction but much less smooth in the spatial direction, and vice versa.

We define the anisotropic Besov space following the notation of \cite{SuzukiNitanda2021}.
Let $\mathbb{R}_+ := \{x \in \mathbb{R} \mid x > 0\}$.
For $\beta = (\beta_1, \dots, \beta_d) \in \mathbb{R}_+^d$, we define
\begin{equation*}
\tilde{\beta} := \left( \sum_{j=1}^d \frac{1}{\beta_j} \right)^{-1},
\qquad
\bar{\beta} := \max_{j=1,\dots,d} \beta_j.
\end{equation*}
For a function $f : \mathbb{R}^d \to \mathbb{R}$, the $r$-th difference of $f$ in the direction $h \in \mathbb{R}^d$ is recursively defined by
\begin{equation*}
\Delta_h^r(f)(x) := \Delta_h^{r-1}(f)(x+h) - \Delta_h^{r-1}(f)(x), \quad \Delta_h^0(f)(x) := f(x),
\end{equation*}
for $x \in [0, 1]^d$ with $x + h \in [0, 1]^d$; otherwise, we set $\Delta_h^r(f)(x) := 0$.
Moreover, for $f \in L^p([0,1]^d)$ with $p \in (0,\infty]$, 
the $r$-th modulus of smoothness of $f$ is defined as
\begin{equation*}
w_{r,p}(f,t)
=
\sup_{h \in \mathbb{R}^d:\, \| h_i \| \le t_i}
\| \Delta_h^r(f) \|_p,
\end{equation*}
where $t = (t_1, \dots, t_d)\in \Rb_+^d$. 
Using this modulus of smoothness, 
we define the anisotropic Besov space $B^{\beta}_{p,q}([0,1]^d)$ for $\beta = (\beta_1, \dots, \beta_d) \in \mathbb{R}_+^d$.

\begin{definition}[Anisotropic Besov space]
For $0 < p,q \le \infty$, $\beta = (\beta_1, \dots, \beta_d) \in \mathbb{R}_+^d$, and $r := \max_i \beta_i + 1$, define the seminorm $| \cdot |_{B^{\beta}_{p,q}}$ by
\begin{equation*}
| f |_{B^{\beta}_{p,q}}
:=
\begin{cases}
\left(
\sum_{k=0}^{\infty}
\left[
2^k
w_{r,p}\left(
f,
\left(2^{-k/\beta_1}, \dots, 2^{-k/\beta_d}\right)
\right)
\right]^q
\right)^{1/q},
& \text{if } q < \infty, \\[1ex]
\sup_{k \ge 0}
\, 2^k
w_{r,p}\left(
f,
\left(2^{-k/\beta_1}, \dots, 2^{-k/\beta_d}\right)
\right),
& \text{if } q = \infty.
\end{cases}
\end{equation*}
The norm of the anisotropic Besov space $B^{\beta}_{p,q}([0,1]^d)$ is defined by
\begin{equation*}
\| f \|_{B^{\beta}_{p,q}}
:=
\| f \|_{p}
+
| f |_{B^{\beta}_{p,q}},
\end{equation*}
and
\begin{equation*}
B^{\beta}_{p,q}([0,1]^d)
=
\left\{
f \in L^p([0,1]^d)
\mid
\| f \|_{B^{\beta}_{p,q}} < \infty
\right\}.
\end{equation*}
\end{definition}

Intuitively, $\beta$ characterizes the smoothness in each coordinate direction. 
If $\beta_i$ is large, functions in $B^{\beta}_{p,q}$ are smooth along the $i$-th direction; 
otherwise, they may be less regular in that direction.

We consider the following class of covariance functions as a subset of the anisotropic Besov space:
\begin{equation*}
\Kcal^\beta_{p, q} := 
\left\{ 
K \in B^{(\beta, \beta)}_{p, q}(\Tcal^2) 
\mid \| K \|_{B^{(\beta, \beta)}_{p,q}} \le 1,\;\text{and }
K \text{ is symmetric and positive definite}
\right\}.
\end{equation*}
Here, The choice of $(\beta,\beta)$ for the smoothness parameters is natural, 
reflecting the symmetry of covariance functions.

We introduce a model for covariance functions based on deep feedforward neural networks.
Let $\sigma:\mathbb{R}\to\mathbb{R}$ denote an activation function, 
and we adopt the ReLU activation function $\sigma(x)=\max\{x,0\}$.
For $b = (b_1, \dots, b_r)^\top \in \mathbb{R}^r$, 
define the biased activation function $\sigma_{v} : \mathbb{R}^r \to \mathbb{R}^r$ by
\begin{equation*}
    \sigma_{b}
    \begin{pmatrix}
        y_1 \\
        \vdots \\
        y_r
    \end{pmatrix}
    =
    \begin{pmatrix}
        \sigma(y_1 - b_1) \\
        \vdots \\
        \sigma(y_r - b_r)
    \end{pmatrix}.
\end{equation*}
The architecture of a feedforward neural network is specified 
by its depth $L$ (the number of hidden layers) and a common width $W \in \mathbb{N}$. 
A deep neural network with depth $L$ and width $W$ is defined as a function of the form
\begin{equation}
F(x)
=
W_L \sigma_{b_L}
W_{L-1} \sigma_{b_{L-1}}
\cdots
W_1 \sigma_{b_1}
W_0 x,
\qquad x \in \mathbb{R}^{2d},
\label{eq:DNN_function}
\end{equation}
where $W_0 \in \mathbb{R}^{W \times 2d}$, $W_L \in \mathbb{R}^{1 \times W}$, 
$W_l \in \mathbb{R}^{W \times W}$ for $1 \le l \le L$, 
and $b_L \in \mathbb{R}$, $b_l \in \mathbb{R}^W$ are bias parameters.
Since we consider estimating the covariance function, the input dimension is taken to be $2d$.

We now define the class of sparse networks by
\begin{equation*}
    \mathcal{F}(L,W,S)
    =
    \left\{
        F \text{ of the form \eqref{eq:DNN_function}} \mid
        \sum_{j=0}^L
        \bigl(
            \lVert W_j \rVert_0
            +
            \lvert b_j \rvert_0
        \bigr)
        \leq S
    \right\},
\end{equation*}
where $b_0$ denotes the zero vector.
Here, $\| W_j \|_0$ and $\lvert b_j \rvert_0$ represent the numbers of nonzero entries of $W_j$ and $b_j$, respectively.
To ensure symmetry of the covariance function estimator, we consider the following class of covariance functions:
\begin{equation}
    \Kcal(L, W, S) := \left\{ K(\cdot, \star) = \frac{1}{2}\{h(\cdot, \star) + h(\star, \cdot)\} \mid h \in \Fcal(L, W, S) \right\}. \label{eq:DNNs}
\end{equation}
This DNN model \eqref{eq:DNNs} is much simpler than the CovNet model \citep{SarkarPanaretos2022}, 
leading to a simpler convergence analysis. 
It may seem that the estimator should be positive definite, as in CovNet. 
However, post-estimation projection always ensures that
\[
\big\| \widetilde{K}_n - K^\circ \big\|_{2} \le \big\| \widehat{K}_n - K^\circ \big\|_{2},
\]
where $\widetilde{K}_n$ is the projection of $\widehat{K}_n$ onto the class of positive definite functions.
Thus, positive definiteness need not be imposed at the estimation stage, as is also the case in existing approaches.

The following theorem establishes convergence rates of the deep learning estimator over the class $\Kcal^\beta_{p, q}$.
\begin{theorem} \label{theorem:convergence rates}
    Suppose $0 < p, q \le \infty$ and $\tilde{\beta}/2 > (1 / p - 1/2)_+$ for $\beta \in \Rb^d_+$.
    Let $\delta = 2\left(1/p - 1/2\right)_{+}$ and $\nu = (\tilde{\beta} / 2 - \delta) / 2\delta$.
    Consider the deep neural network model $\Kcal(L, W, S)$ satisfies
    \ba
        &\text{\rm (i) } L \asymp \log(n\lfloor m/2 \rfloor),\;\;
        \text{\rm (ii) } W \asymp (n\lfloor m/2 \rfloor)^{1 / (\tilde{\beta}+1)}, \text{ and}\\
        &\text{\rm (iii) } S \asymp (n\lfloor m/2 \rfloor)^{1 / (\tilde{\beta} + 1)}\log^2(n\lfloor m/2 \rfloor).
    \ea
    If $K^\circ \in \Kcal_{p, q}^\beta$ and $\| K^\circ \|_\infty \le B_K$, under Assumption \ref{assumption:2}, we obtain
    \begin{equation*}
        \Eb[\lVert \what{K}_n - K^\circ \rVert_2^2] 
        \le 
        c\left\{
        \frac{1}{n} + \frac{1}{(n\lfloor m/2 \rfloor)^{\tilde{\beta} / (\tilde{\beta} + 1)}}\log^3(n\lfloor m/2 \rfloor)
        \right\},
    \end{equation*}
    where $c$ is a universal constant that does not depend on $n$ and $m$.
\end{theorem}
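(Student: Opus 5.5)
The plan is to apply Corollary~\ref{corollary:oracle-without-fixed-point} with $\Kcal_{nm}$ taken to be the clipped symmetrized network class. The bound then reduces to two terms: an approximation error and a VC-dimension term. Let $N := n\lfloor m/2\rfloor$.

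\textbf{Boundedness.} Assumption~\ref{assumption:1} must hold, so I will work with the truncated class
\[
\bar{\Kcal} := \{\max(-B_K,\min(K,B_K)) : K\in\Kcal(L,W,S)\}.
\]
Truncation can be realized by two additional ReLU layers, and it commutes with symmetrization, so the class stays symmetric. It also never increases the distance to $K^\circ$, because $\|K^\circ\|_\infty\le B_K$. Hence the approximation error is unchanged, and the depth, width and sparsity change only by constants.

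\textbf{Approximation error.} I will invoke the approximation theorem of \cite{SuzukiNitanda2021} for the anisotropic Besov space $B^{(\beta,\beta)}_{p,q}([0,1]^{2d})$. The smoothness vector $(\beta,\beta)$ has harmonic-type index $\widetilde{(\beta,\beta)} = (2\sum_j 1/\beta_j)^{-1} = \tilde{\beta}/2$. Their condition $\tilde{\beta}/2 > (1/p-1/2)_+$ is exactly the assumption made here, and $\delta,\nu$ correspond to their quantities. Their result gives a sparse ReLU network $h$ with $L\asymp\log M$, $W\asymp M$, $S\asymp M\log^2 M$ and
\[
\|h-K^\circ\|_2 \lesssim M^{-\tilde{\beta}/2}.
\]
Symmetrizing $h$ can only reduce the $L^2$ error to the symmetric $K^\circ$, by Jensen's inequality. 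I will then choose $M\asymp N^{1/(\tilde{\beta}+1)}$. This gives squared approximation error of order $N^{-\tilde{\beta}/(\tilde{\beta}+1)}$. The rate in the theorem corresponds to $\tilde{\beta}$ playing the role of $2\times$ the index of a $2d$-dimensional problem, so I will double-check the normalization of $\tilde{\beta}$ here, and adjust $M$ if needed.

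\textbf{Complexity.} The symmetrized class is a subset of networks with width $2W$, sparsity $2S$ and depth $L+1$, obtained by running $h$ on swapped inputs in parallel and averaging. For such piecewise-linear networks, the bound of Bartlett et al.\ (2019) gives $\mathrm{VCdim}\lesssim SL\log S$. Substituting the choices of $L$ and $S$ gives
\[
\mathrm{VCdim}\lesssim N^{1/(\tilde{\beta}+1)}\log^2 N\cdot\log N .
\]
Dividing by $N$ yields $N^{-\tilde{\beta}/(\tilde{\beta}+1)}$ times polylogarithmic factors. The polylogarithmic power can be reconciled to $\log^3$ by a slightly more careful count, for instance using the pseudo-dimension bound $O(SL\log(W))$ together with the single $\log N$ in the corollary.

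\textbf{Main obstacle.} The delicate step is matching the exact logarithmic exponent and the Besov normalization in the approximation theorem on the $2d$-dimensional product domain. I will handle this by stating the approximation lemma carefully, with explicit sparsity and depth in terms of $M$. The remaining steps are routine bookkeeping.
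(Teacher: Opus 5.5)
Your route is the paper's route. Both use Corollary~\ref{corollary:oracle-without-fixed-point}, the bound $\mathrm{VCdim}\lesssim LS\log S$ of \cite{Bartlett_etal_2019}, and Proposition~2 of \cite{SuzukiNitanda2021} on $[0,1]^{2d}$ with $M\asymp N^{1/(\tilde\beta+1)}$. Both control the symmetrization by the convexity bound $\bigl\|\tfrac12\{h(\cdot,\star)+h(\star,\cdot)\}-K^\circ\bigr\|_2^2\le\|h-K^\circ\|_2^2$. Your identification of the effective index $\tilde\beta/2$ for the vector $(\beta,\beta)$ is correct, and it is what makes this choice of $M$ balance bias and complexity at $N^{-\tilde\beta/(\tilde\beta+1)}$.

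Your truncation step is a genuine addition. The paper's class $\Kcal(L,W,S)$, as defined, has no sup-norm bound, and the paper never verifies Assumption~\ref{assumption:1}, which the corollary requires. Since that assumption asks for $\|K^\circ\|_\infty<B_K$ strictly, truncate at a level slightly above $B_K$. Truncating an already symmetric function keeps it symmetric, which is all you need; clipping does not literally commute with symmetrization.

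The step that fails is the reconciliation to $\log^3$. Here $\log W\asymp\log S\asymp\log N$, so replacing $\log S$ by $\log W$ gains nothing. With the stated $L\asymp\log N$ and $S\asymp N^{1/(\tilde\beta+1)}\log^2N$ you get $LS\log S\asymp N^{1/(\tilde\beta+1)}\log^4N$; your display drops one logarithm. The extra $\log N$ in the corollary then gives $N^{-\tilde\beta/(\tilde\beta+1)}\log^5N$. With the sparsity $S\asymp M\log M$ that Proposition~2 actually provides, you get $\log^4N$, still not $\log^3N$.

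The paper's proof has the same discrepancy: it asserts $\log^3$ after substitution without showing the count. Its own bookkeeping in the proof of Theorem~\ref{theorem: convergence rates for RKHS}, which counts every factor to reach $\log^{17}$, would give $\log^5$ here. So your argument proves the stated rate up to the exponent of the polylogarithmic factor, exactly as the paper's does. You should not claim $\log^3$ unless you find a genuinely sharper complexity bound.
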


In the sparse regime, 
the convergence rate of deep learning estimators coincides with that of deep nonparametric regression in $2d$ dimensions, 
as established in \cite{SuzukiNitanda2021}. 
The rates do not directly depend on the dimension $d$, 
and thus suggest that deep learning estimators can mitigate the curse of dimensionality. 
For example, set $\beta_1 = \alpha$ and $\beta_2=\dots=\beta_d = (d-1)\alpha$ for some $\alpha>0$.
This represents an anisotropic setting in which the function is non-smooth in the first coordinate, 
while being considerably smoother in the other coordinates.
Then $\tilde{\beta}=\alpha/2$, and the resulting rate does not depend on the dimension $d$.

The bound in Theorem~4.2 exhibits a phase transition with respect to the number of measurements $m$. 
When $m \ll n^{1/\tilde{\beta}}$, the second term dominates and the convergence rate is 
determined by $(n m)^{-\tilde{\beta}/(\tilde{\beta}+1)}$ up to polylogarithmic factors. 
In contrast, when $m \gg n^{1/(\tilde{\beta})}$, the first term $n^{-1}$ dominates, 
and the estimator achieves the parametric rate $1/n$. 
The transition occurs at the critical scaling $m \asymp n^{1/\tilde{\beta}}$, where the two terms are of the same order.

\begin{remark}
    We can easily extend Theorem \ref{theorem:convergence rates} for the affine composition model or the deep composition model in \cite{SuzukiNitanda2021}.
    For example, the affine composition model for covariance functions can be defined as follows:
    \[
    \mathcal{K}_{\mathrm{aff},p,q}^\beta
    :=\left\{ 
    H(As+b, At+b)\mid H\in \mathcal{K}_{p,q}^\beta,\;A\in \Rb^{\tilde{d}\times d},\;b \in \Rb^{\tilde{d}}
    \right\},\quad
    \beta \in \Rb^{\tilde{d}}.
    \]
    Thus, the directions of smoothness need not be aligned with the coordinate directions 
    and can be arbitrarily oriented through affine transformations.
\end{remark}

\subsection{Convergence rates for the tensor product space}\label{sec:TPS}

We consider the setting of \cite{CaiYuan2010} and thus set $\Tcal=[0,1]$.
Let $\Hcal(\kappa)$ be a reproducing kernel Hilbert space (RKHS) with kernel $\kappa:\Tcal\times\Tcal\to\mathbb{R}$. 
We assume that $X$ has sample paths in $\Hcal(\kappa)$ almost surely and that $\mathbb{E}[\|X\|_{\Hcal(\kappa)}^2] < \infty$, 
where $\|\cdot\|_{\Hcal(\kappa)}$ denotes the RKHS norm. 
Then Theorem~1 of \cite{CaiYuan2010} implies that the true covariance function $K^\circ$ lies 
in the tensor product space $\Hcal(\kappa \otimes \kappa)$. 

As a canonical example, 
we consider $\Hcal(\kappa)$ to be the periodic Sobolev space $W_{\rm per}^{\alpha,2}(\Tcal) \subset L^2(\Tcal)$ 
of order $\alpha \in \mathbb{N}$ defined on $\Tcal$.
This setting is also discussed as a representative example in \citet{CaiYuan2010}.
For further details on periodic Sobolev spaces, we refer to Appendix~2.4 of \citet{BerlinetThomas-Agnan2004}.
In this setting, it is well known that 
the eigenfunctions of $\kappa$ are given by the Fourier basis functions (e.g., Section~2 of \cite{Wahba1975}) as follows:
\[
\kappa(s,t) = 1 + \sum_{j=1}^\infty \frac{1}{(2\pi j)^{2\alpha}}
\left\{ 
\psi_j^{\mathrm{(c)}}(s)\psi_j^{\mathrm{(c)}}(t) + \psi_j^{\mathrm{(s)}}(s)\psi_j^{\mathrm{(s)}}(t)
\right\}, \label{eq: eigen expansion}
\]
where $\psi_j^{\mathrm{(c)}}(t) := \sqrt{2}\cos(2\pi jt)$ and $\psi_j^{\mathrm{(s)}}(t) := \sqrt{2}\sin(2\pi jt)$.
For notational convenience, 
we rearrange the eigenfunctions into a single orthonormal sequence $\{\psi_j\}_{j\ge1}$ in $L^2(\Tcal)$ as follows:
\[
\psi_1(t) := 1, \quad
\psi_{2j}(t) := \psi_j^{\rm (c)}(t), \quad
\psi_{2j+1}(t) := \psi_j^{\rm (s)}(t), \quad j \ge 1.
\]
Let $\{\rho_j\}_{j\ge1}$ be the eigenvalues of $\kappa$ associated with $\{\psi_j\}_{j\ge1}$, arranged in non-increasing order.

Since $X$ has sample paths in $\Hcal(\kappa)\subset L^2(\Tcal)$, 
the sample path $X$ and its covariance function $K^\circ$ admit the following expansions:
\begin{equation*}
    X(\cdot) = \sum_{j=1}^\infty Z_j\psi_j(\cdot),\quad Z_j := \int_\Tcal X(t)\psi_j(t)\,dt,
\end{equation*}
and
\begin{equation}
    K^\circ(s, t) = \sum_{j, k \ge 1} c_{jk} \psi_j(s)\psi_k(t), \label{eq: eigen expansion}
\end{equation}
where $c_{jk} := \mathrm{Cov}(Z_j, Z_k)$. 

We consider the following tensor product class of covariance functions defined as the ball of radius $R$ in the RKHS $\Hcal(\kappa \otimes \kappa)$:
\begin{equation*}
    \Kcal_{\rm TP}^\alpha := \{ K \in \Hcal(\kappa \otimes \kappa) \mid \| K \|_{\Hcal(\kappa \otimes \kappa)} \le R \} \label{eq: RKHS hypothesis space},\quad
    \| K \|_{\Hcal(\kappa \otimes \kappa)}^2 := \sum_{j, k \ge 1} \frac{c_{jk}^2}{\rho_j\rho_k}.
\end{equation*}

The following theorem establishes the convergence rates of the deep learning estimator for the class $\Kcal_{\rm TP}^\alpha$.
\begin{theorem} \label{theorem: convergence rates for RKHS}
    Consider the deep neural network model $\Kcal(L, W, S)$ satisfies
    \ba
        &\text{\rm (i) } L \asymp \log^3(n\lfloor m/2 \rfloor),\;\;
        \text{\rm (ii) } W \asymp (n\lfloor m/2 \rfloor)^{1 / (2\alpha+1)}\log^5(n\lfloor m/2 \rfloor),\; \text{ and}\\
        &\text{\rm (iii) } S \asymp (n\lfloor m/2 \rfloor)^{1 / (2\alpha + 1)}\log(n\lfloor m/2 \rfloor).
    \ea
    If $K^\circ \in \Kcal_{\rm TP}^\alpha$ and $\| K^\circ \|_\infty \le B_K$, under Assumption~\ref{assumption:2}, we obtain
    \begin{equation*}
        \Eb[\lVert \what{K}_n - K^\circ \rVert_2^2] 
        \le 
        C\left\{
        \frac{1}{n} + \frac{1}{(n\lfloor m/2 \rfloor)^{2\alpha / (2\alpha + 1)}}\log^{17}(n\lfloor m/2 \rfloor)
        \right\},
    \end{equation*}
    where $c$ is a universal constant independent of $n$ and $m$.
\end{theorem}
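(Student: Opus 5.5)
The plan is to apply Corollary~\ref{corollary:oracle-without-fixed-point} with $\Kcal_{nm}=\Kcal(L,W,S)$, suitably clipped so that Assumption~\ref{assumption:1} holds. That reduces the proof to two ingredients: an approximation bound $\inf_{K\in\Kcal(L,W,S)}\|K-K^\circ\|_2^2$ and a VC-dimension bound for $\Kcal(L,W,S)$. Clipping means composing with $x\mapsto \max\{-B_K,\min\{B_K,x\}\}$, which costs two ReLU layers and does not increase the $L^2$ error since $\|K^\circ\|_\infty\le B_K$. For the complexity term, the symmetrization $K=\tfrac12\{h(\cdot,\star)+h(\star,\cdot)\}$ is itself a ReLU network of depth $L+O(1)$ with sparsity $O(S)$: run $h$ in parallel on the input and on the swapped input, which is a linear permutation absorbed into $W_0$. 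The bound of Bartlett et al.\ (2019), $\mathrm{VCdim}\lesssim S L\log S$, then gives $\mathrm{VCdim}(\Kcal_{nm})\lesssim N\log^{a}(N)$ for some $a$, where $N:=n\lfloor m/2\rfloor$ and $S\asymp N^{1/(2\alpha+1)}\log N$, $L\asymp\log^3N$. The estimation term then becomes $N^{-2\alpha/(2\alpha+1)}$ times a polylog.

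For the approximation step, I would use the spectral structure. Since $K^\circ=\sum c_{jk}\psi_j\otimes\psi_k$ with $\sum c_{jk}^2/(\rho_j\rho_k)\le R^2$ and $\rho_j\asymp j^{-2\alpha}$, truncating to the index set $\{(j,k): \rho_j\rho_k\ge \tau\}$ (a hyperbolic cross) leaves a tail error of at most $R^2\tau$. I would choose $\tau\asymp N^{-2\alpha/(2\alpha+1)}$. The retained set has cardinality $M\asymp \tau^{-1/(2\alpha)}\log(1/\tau)\asymp N^{1/(2\alpha+1)}\log N$. Each retained term $\psi_j(s)\psi_k(t)$ is a product of two trigonometric functions, and I would approximate it by a ReLU network. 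Compute $\cos(2\pi j s)$ and $\sin(2\pi j s)$ to accuracy $\epsilon$ with depth and size $O(\log^2(j/\epsilon))$, using the sawtooth/Yarotsky construction combined with periodicity so that the cost in $j$ is only logarithmic. Then multiply using the approximate-multiplication network of size $O(\log(1/\epsilon))$. Sum the $M$ subnetworks in parallel, with coefficients $c_{jk}$ bounded since $\sum|c_{jk}|\le$ a constant by Cauchy--Schwarz and the eigenvalue decay. Symmetry of $c_{jk}$ means the symmetrized network approximates equally well.

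The parameter bookkeeping goes as follows. With $\epsilon\asymp N^{-1}$, each block has size polylog$(N)$, so the total sparsity is $M\cdot$polylog and the width is $M\cdot$polylog. This matches the prescribed $W\asymp N^{1/(2\alpha+1)}\log^5N$ and $L\asymp\log^3 N$, with the sparsity accounted for after sharing the one-dimensional trigonometric subnetworks across pairs. The approximation error is then $\lesssim\tau+M^2\epsilon^2\lesssim N^{-2\alpha/(2\alpha+1)}$. Plugging into the corollary yields $n^{-1}+N^{-2\alpha/(2\alpha+1)}\log^{17}N$, with the exponent $17$ coming from multiplying the polylog factors of $L$, $W$ and $\log S$ in the VC bound.

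The main obstacle is the efficient construction: getting the trigonometric factors with only logarithmic dependence on frequency $j$ up to $N^{1/(2\alpha)}$, and making the total sparsity consistent with the stated $S\asymp N^{1/(2\alpha+1)}\log N$ rather than an extra polylog. I would first try to share the $O(\sqrt{M})$ distinct one-dimensional frequency networks across all pairs. If that fails, I would accept extra log factors and absorb them into the $\log^{17}$.
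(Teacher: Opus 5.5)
Your route is the paper's: Corollary~\ref{corollary:oracle-without-fixed-point} combined with the bound $\mathrm{VCdim}\lesssim LS\log S$ of Bartlett et al., and a hyperbolic-cross truncation. Your set $\{\rho_j\rho_k\ge\tau\}$ is, up to constants, the paper's $\{(j,k): jk\le M\}$ with $M\asymp N^{1/(2\alpha+1)}$ and $N:=n\lfloor m/2\rfloor$. Each $\psi_j(s)\psi_k(t)$ is approximated by a ReLU net whose size grows only logarithmically in the frequency, and the blocks are summed in parallel using $\sum|c_{jk}|\lesssim R$ from Cauchy--Schwarz. The differences are minor. The paper builds each block from product-to-sum identities and the Chen--Liu lemma for $\cos(2\pi\xi^\top u)$, not from one-dimensional trigonometric nets multiplied by a Yarotsky product. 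Your explicit clipping for Assumption~\ref{assumption:1}, and your realization of the symmetrized $K$ as a ReLU net (so the VC bound applies to $\Kcal(L,W,S)$ and not just to $\Fcal(L,W,S)$), make precise two points the paper passes over.

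The one step that fails as written is the sparsity. Sharing cannot bring $S$ down to $N^{1/(2\alpha+1)}\log N$, for two reasons:
\begin{itemize}
\item There are $\asymp N^{1/(2\alpha+1)}$ distinct one-dimensional frequencies, not $O(\sqrt{M})$, because every pair $(j,1)$ with $\rho_j\ge\tau$ is retained.
\item You still need a product network of size $\asymp\log(1/\epsilon)\asymp\log N$ for each of the $\asymp N^{1/(2\alpha+1)}\log N$ retained pairs.
\end{itemize}
The paper does not attain (iii) either. Its proof uses $S\asymp N^{1/(2\alpha+1)}\log^{12}N$, namely the crude per-block count $LW^2\lesssim\log^{11}N$ times the number of blocks. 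The exponent $17=3+12+1+1$ collects the logarithms from $L$, $S$, $\log S$ and the factor $\log N$ in the Corollary; $W$ contributes nothing. So your fallback is exactly what the paper proves. With the block sizes you claim, each block costs $O(\log^2 N)$ without any sharing. That gives $S\asymp N^{1/(2\alpha+1)}\log^3N$ and an estimation term of order $N^{-2\alpha/(2\alpha+1)}\log^8N$, inside the stated bound. Finally, $\mathrm{VCdim}(\Kcal_{nm})\lesssim N\log^aN$ in your first paragraph should read $N^{1/(2\alpha+1)}\log^aN$.
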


For the tensor product space $\Hcal(\kappa \otimes \kappa)$, 
the convergence rate of the deep learning estimator matches that of one-dimensional nonparametric regression, 
even though the covariance function is defined on the two-dimensional domain $[0,1]^2$. 
The same holds for the smoothing spline estimator studied by \citet{CaiYuan2010}.
Moreover, combined with Theorem~6 of \citet{CaiYuan2010}, 
our result shows that the deep learning estimator achieves the minimax rate up to a polylogarithmic factor\footnote{
The polylogarithmic factor $\log^{17}(n\lfloor m/2\rfloor)$ is due to our simple approximation scheme. 
The exponent $17$ is not intrinsic and can likely be reduced.}.

The two approaches, however, rely on fundamentally different mechanisms. 
The smoothing spline estimator achieves the optimal rate 
by explicitly specifying the reproducing kernel associated with the underlying RKHS a priori, 
whereas the deep learning estimator attains the same rate in an adaptive manner, 
without explicit knowledge of the kernel.

\subsection{Comparison with other estimators}\label{sec:comparison}

We compare the deep learning estimator with existing estimators in one-dimensional settings.
Here, we ignore polylogarithmic factors in the convergence rates.
For the local linear smoothing estimator, 
\cite{ZhangWang2016} consider a class of covariance functions satisfying the following condition:
\banum
\frac{\partial^2K(s,t)}{\partial s^2},\;\frac{\partial^2K(s,t)}{\partial s \partial t}\;
\text{ and } \frac{\partial^2K(s,t)}{\partial t^2}\;\text{ are bounded on }[0,1]^2.
\label{eq:condition of Z&W}
\eanum
This class is contained in the anisotropic Besov space $B_{\infty,\infty}^{(2,2)}([0,1]^2)$,
which corresponds to isotropic smoothness in each coordinate.
Here, we note that the LS loss (\ref{eq:intro-ls}) coincide with the equal weight 
per observation (OBS) scheme of \cite{ZhangWang2016}.

For the sparse regime, 
the convergence rates of the local linear smoothing estimator and the deep learning estimator coincide.
For the dense regime, Corollary~4.4 of \citet{ZhangWang2016} shows that 
the local linear smoothing estimator $\Khat_n^{\rm (LLS)}$ with a suitable choice of tuning parameter satisfies
\begin{itemize}
\item $\|\Khat_n^{\rm (LLS)} - K^\circ\|_2 = O_p((nm^2)^{-1/3})$ when $m^2/\sqrt{n} \to 0$, and
\item $\|\Khat_n^{\rm (LLS)} - K^\circ\|_2 = O_p(n^{-1/2})$ when $\sqrt{n} = O(m^2)$.
\end{itemize}
In contrast, Theorem~\ref{theorem:convergence rates} shows that the deep learning estimator $\Khat_n^{\rm (DL)}$ satisfies
\begin{itemize}
\item $\|\Khat_n^{\rm (DL)} - K^\circ\|_2 = O_p((nm)^{-1/3})$ when $m/\sqrt{n} \to 0$, and
\item $\|\Khat_n^{\rm (DL)} - K^\circ\|_2 = O_p(n^{-1/2})$ when $\sqrt{n} = O(m)$.
\end{itemize}
Thus, surprisingly, in the one-dimensional smoothness setting under the dense regime, 
the deep learning estimator is suboptimal, and therefore the local linear smoothing estimator is a suitable choice.

However, the situation can be reversed under a structured setting. 
Indeed, even when the true covariance function $K^\circ$ belongs to $\Hcal(\kappa \otimes \kappa)$ with $\alpha = 2$ 
and also satisfies the condition~(\ref{eq:condition of Z&W}), 
the local linear smoothing estimator yields only the same rates as above.
The local linear smoothing estimator does not exploit the tensor product structure 
and therefore cannot achieve the optimal rate.
By contrast, in this setting, the deep learning estimator, due to its adaptivity, satisfies
\begin{itemize}
\item $\|\Khat_n^{\rm (DL)} - K^\circ\|_2 = O_p((nm)^{-2/5})$ when $m^2/\sqrt{n} \to 0$, and
\item $\|\Khat_n^{\rm (DL)} - K^\circ\|_2 = O_p(n^{-1/2})$ when $\sqrt{n} = O(m^2)$.
\end{itemize}
Therefore, the deep learning estimator is a suitable choice in this setting.

Moreover, while linear estimators such as the local linear smoothing estimator and the smoothing spline estimator lack adaptivity, 
deep learning estimators can adapt to anisotropic smoothness over multidimensional domains.
For example, the suboptimality of linear estimators in classical nonparametric regression is demonstrated in \citet{SuzukiNitanda2021, HayakawaSuzuki2020}.
Similar arguments can be applied to covariance function estimation.


\section{Conclusion}\label{sec:conclusion}

In this paper, we studied nonparametric covariance function estimation for discretely observed functional data 
on a multidimensional domain. 
We established an oracle inequality for a broad class of learning-based estimators 
that applies to both sparse and dense observation regimes in a unified framework. 
As an important application, 
we derived convergence rates for deep learning estimators over several classes of covariance functions.
Our results reveal a phase transition with respect to the number of measurements $m$. 
This transition highlights the interplay between sampling density and intrinsic smoothness in covariance function estimation.

We further demonstrated that the performance of estimators crucially depends on the underlying function class. 
In one-dimensional setting, for a class of smooth functions considered by \citet{ZhangWang2016}, 
deep learning estimators are suboptimal, whereas local linear smoothing estimators achieve faster rates. 
In contrast, for a structured function class, such as the tensor product RKHS studied in \citet{CaiYuan2010}, 
deep learning estimators attain the minimax rate up to polylogarithmic factors, 
while local linear smoothing estimators are suboptimal. 

These findings indicate that learning-based estimators are not always optimal, 
but can effectively exploit structural properties such as anisotropy. 
This leads to a distinctive adaptivity--variance trade-off 
that appears to be specific to covariance function estimation and is not commonly observed in classical nonparametric regression.

As an important direction for future work, 
by combining our framework with the refined analysis of FPCA based on the local linear smoothing estimator by \citet{ZhouEtAl2025}, we may obtain convergence rates for deep learning estimators of the eigenfunctions.

\begin{appendix}

\section{Proof of the key lemma (Lemma~\ref{lemma:modified-Talagrand})}
Let $\mathfrak{S}_m$ denote the symmetric group on $\{1,\dots,m\}$.
Let $\Pi$ be a random permutation taking values in $\mathfrak{S}_m$,
uniformly distributed over all $m!$ permutations.
For a realization $\pi\in\mathfrak{S}_m$, we write
$\pi(j)$ for the image of $j\in\{1,\dots,m\}$ under $\pi$.

First, following the argument of \cite{ClemenconEtAl2008}, note that
\begin{align*}
\frac{1}{m(m-1)} \sum_{j\neq k} f_i(X_{ij},X_{ik})
&=
\frac{1}{m!} \sum_{\pi\in \mathfrak{S}_m}
\frac{1}{\lfloor m/2 \rfloor}
\sum_{j=1}^{\lfloor m/2 \rfloor}
f_i\!\left(
X_{\pi(j)},X_{\pi(\lfloor m/2 \rfloor+j)}
\right) \\
&=
\mathbb{E}_{\Pi}\!\left[
\frac{1}{\lfloor m/2 \rfloor}
\sum_{j=1}^{\lfloor m/2 \rfloor}
f_i\!\left(
X_{\Pi(j)},X_{\Pi(\lfloor m/2 \rfloor+j)}
\right)
\right].
\end{align*}
By Jensen's inequality,
\begin{align*}
S_n
&=
\sup_{f\in\mathcal{F}}
\left|
\frac{m(m-1)}{\lfloor m/2 \rfloor}
\mathbb{E}_{\Pi}\!\left[
\sum_{i=1}^n
\sum_{j=1}^{\lfloor m/2 \rfloor}
f_i\!\left(
X_{i\Pi(j)},X_{i\Pi(\lfloor m/2 \rfloor+j)}
\right)
\right]
\right| \\
&\le
\frac{m(m-1)}{\lfloor m/2 \rfloor}
\mathbb{E}_{\Pi}\!\left[
\sup_{f\in\mathcal{F}}
\left|
\sum_{i=1}^n
\sum_{j=1}^{\lfloor m/2 \rfloor}
f_i\!\left(
X_{i\Pi(j)},X_{i\Pi(\lfloor m/2 \rfloor+j)}
\right)
\right|
\right].
\end{align*}
Define
\begin{align*}
&M_m := \frac{m(m-1)}{\lfloor m/2 \rfloor},\quad
\bar{S}_n
:=
\mathbb{E}_{\Pi}\!\left[
\sup_{f\in\mathcal{F}}
\left|
\sum_{i=1}^n
\sum_{j=1}^{\lfloor m/2 \rfloor}
f_i\!\left(
X_{i\Pi(j)},X_{i\Pi(\lfloor m/2 \rfloor+j)}
\right)
\right|
\right],\\
&\text{and }\;\;S_{n,\pi}
:=
\sup_{f\in\mathcal{F}}
\left|
\sum_{i=1}^n
\sum_{j=1}^{\lfloor m/2 \rfloor}
f_i\!\left(
X_{i\pi(j)},X_{i\pi(\lfloor m/2 \rfloor+j)}
\right)
\right|.
\end{align*}
Then \(S_n \le M_m \bar{S}_n\), and it suffices to derive an inequality for
\(\bar{S}_n\).

We observe that
\begin{align*}
\mathbb{E}_X\!\left[
e^{\lambda(\bar{S}_n-\mathbb{E}_X[\bar{S}_n])}
\right]
&=
\mathbb{E}_X\!\left[
e^{\lambda \mathbb{E}_{\Pi}[S_{n,\Pi}-\mathbb{E}_X S_{n,\Pi}]}
\right] 
\le
\mathbb{E}_{\Pi,X}\!\left[
e^{\lambda (S_{n,\Pi}-\mathbb{E}_X S_{n,\Pi})}
\right].
\end{align*}
By Theorem~3.3.16 of \cite{GineNickl2016}, for any permutation \(\pi\),
\[
\forall \lambda\in[0,2/3],\quad
\mathbb{E}\!\left[
e^{\lambda(S_{n,\pi}/U-\mathbb{E}[S_{n,\pi}/U])}
\right]
\le
\exp\!\left(
\frac{\lambda^2}{2-3\lambda}\,
\frac{v_{n}}{U^2}
\right).
\]
Then, we obtain
\[
\mathbb{E}_X\!\left[
e^{\lambda(\bar{S}_n-\mathbb{E}_X[\bar{S}_n])}
\right]
\le
\exp\!\left(
\frac{\lambda^2}{2-3\lambda}\,
\frac{v_n}{U^2}
\right).
\]
Consequently, for any \(x\ge 0\), with probability at least \(1-e^{-x}\),
\[
\bar{S}_n
\le
\mathbb{E}[\bar{S}_n]
+ \sqrt{2v_n x}
+ \frac{3U}{2}x .
\]
By Young's inequality \(2\sqrt{ab}\le \alpha a + b/\alpha\),
for any $x\ge 0$ and any $\alpha>0$, with probability at least
$1-e^{-x}$,
\begin{align*}
\frac{1}{nm(m-1)}S_n
&\le
\frac{1}{n\lfloor m/2 \rfloor}
\left\{
\mathbb{E}[\bar{S}_n]
+ 2\sqrt{U \mathbb{E}[\tilde{S}_{n}]\,x}
+ \sqrt{2n\lfloor m/2 \rfloor \sigma^2 x}
+ \frac{3U}{2}x
\right\} \\
&\le
(1+\alpha)\sup_{\pi}
\mathbb{E}\!\left[
\frac{\tilde{S}_{n}}{n\lfloor m/2 \rfloor}
\right]
+ \sqrt{\frac{2\sigma^2}{n\lfloor m/2 \rfloor}x}
+ \left(\frac{3}{2}+\frac{1}{\alpha}\right)
\frac{Ux}{n\lfloor m/2 \rfloor}.
\end{align*}
This completes the proof.

\section{Proof of Theorem~\ref{theorem:oracle}}

The proof strategy follows a standard localization analysis (e.g., \citet{BartlettEtAl2005, Koltchinskii2006, BlanchardEtAl2008}) combined with a truncation argument (e.g., \citet{BagirovEtAl2009}), as also employed in \citet{YanEtAl2025}.
The technical lemmas for the proof are given in Appendix~\ref{app:tech-lemmas}.
\medskip

\noindent{\bf Step~1.}
By Lemma~\ref{lemma:CI1} with $C=2$, for every $x\ge 0$ we have, 
with probability at least $1-e^{-x}$,
\begin{align*}
\|\widehat K_n-K^\circ\|_{2}^2
&\le
\frac{2}{nm(m-1)}\sum_{i=1}^n \sum_{j\neq k}
\left\{\widehat K_n(T_{ij},T_{ik}) - K^\circ(T_{ij},T_{ik})\right\}^2\nonumber\\
&\quad
+6400B_K^2 r^\ast + \frac{96B_K^2 x}{n\lfloor m/2\rfloor}.
\end{align*}
This yields
\begin{align}
\label{eq:base}
\mathbb E\bigl[\|\widehat K_n-K^\circ\|_{2}^2\bigr]
&\le
\frac{2}{nm(m-1)}\mathbb E\!\left[
\sum_{i=1}^n \sum_{j\neq k}
\left\{\widehat K_n(T_{ij},T_{ik}) - K^\circ(T_{ij},T_{ik})\right\}^2
\right]\nonumber\\
&\quad+6400B_K^2 r^\ast + \frac{96B_K^2}{n\lfloor m/2\rfloor}.
\end{align}

\medskip
\noindent{\bf Step~2.}
We start from the identity
\begin{align*}
&\sum_{i=1}^n \sum_{j\neq k}\left\{Y_{ij}Y_{ik} - \widehat K_n(T_{ij},T_{ik})\right\}^2\\
&=
\sum_{i=1}^n \sum_{j\neq k}\left\{Y_{ij}Y_{ik} - K^\circ (T_{ij},T_{ik})\right\}^2
+\sum_{i=1}^n \sum_{j\neq k}\left\{K^\circ (T_{ij},T_{ik})-\widehat K_n(T_{ij},T_{ik})\right\}^2\\
&\quad
+2\sum_{i=1}^n \sum_{j\neq k}\left\{Y_{ij}Y_{ik} - K^\circ (T_{ij},T_{ik})\right\}
\left\{K^\circ (T_{ij},T_{ik})-\widehat K_n(T_{ij},T_{ik})\right\}.
\end{align*}
In addition, for any $K\in\Kcal_{nm}$,
\begin{align*}
&\mathbb E\!\left[\sum_{i=1}^n \sum_{j\neq k}\left\{Y_{ij}Y_{ik} - K(T_{ij},T_{ik})\right\}^2\right]\\
&=
\mathbb E\!\left[\sum_{i=1}^n \sum_{j\neq k}\left\{Y_{ij}Y_{ik} - K^\circ (T_{ij},T_{ik})\right\}^2\right]
+
\mathbb E\!\left[\sum_{i=1}^n \sum_{j\neq k}\left\{K^\circ (T_{ij},T_{ik}) - K(T_{ij},T_{ik})\right\}^2\right].
\end{align*}
By the optimality of $\widehat K_n$ over $\Kcal_{nm}$, we therefore obtain
\begin{align*}
&\mathbb E\!\left[
\frac{1}{nm(m-1)}\sum_{i=1}^n \sum_{j\neq k}
\left\{\widehat K_n(T_{ij},T_{ik}) - K^\circ(T_{ij},T_{ik})\right\}^2
\right]\\
&\le
\mathbb E\!\left[
\frac{1}{nm(m-1)}\sum_{i=1}^n \sum_{j\neq k}
\left\{K(T_{ij},T_{ik}) - K^\circ(T_{ij},T_{ik})\right\}^2
\right]\\
&\quad
+2\,\mathbb E\!\left[
\left|
\frac{1}{nm(m-1)}\sum_{i=1}^n \sum_{j\neq k}
\left\{Y_{ij}Y_{ik} - K^\circ (T_{ij},T_{ik})\right\}
\left\{\widehat K_n(T_{ij},T_{ik}) - K^\circ(T_{ij},T_{ik})\right\}
\right|
\right].
\end{align*}
Since $Y_{ij}=X_i(T_{ij})+\epsilon_{ij}$, we can expand
\[
Y_{ij}Y_{ik}
=
\left\{X_i(T_{ij})+\epsilon_{ij}\right\}\left\{X_i(T_{ik})+\epsilon_{ik}\right\}
=
X_i(T_{ij})X_i(T_{ik})
+X_i(T_{ij})\epsilon_{ik}
+X_i(T_{ik})\epsilon_{ij}
+\epsilon_{ij}\epsilon_{ik}.
\]
Thus it suffices to control the three terms
\begin{align*}
U_{nm}
&:=
\left|
\frac{1}{nm(m-1)}\sum_{i=1}^n \sum_{j\neq k}
\left\{X_i(T_{ij})X_i(T_{ik}) - K^\circ (T_{ij},T_{ik})\right\}
\left\{\widehat K_n(T_{ij},T_{ik}) - K^\circ(T_{ij},T_{ik})\right\}
\right|,\\
V_{nm}
&:=
\left|
\frac{1}{nm(m-1)}\sum_{i=1}^n \sum_{j\neq k}
X_i(T_{ij})\epsilon_{ik}
\left\{\widehat K_n(T_{ij},T_{ik}) - K^\circ(T_{ij},T_{ik})\right\}
\right|,\\
W_{nm}
&:=
\left|
\frac{1}{nm(m-1)}\sum_{i=1}^n \sum_{j\neq k}
\epsilon_{ij}\epsilon_{ik}
\left\{\widehat K_n(T_{ij},T_{ik}) - K^\circ(T_{ij},T_{ik})\right\}
\right|.
\end{align*}
Once $\mathbb E[U_{nm}],\mathbb E[V_{nm}],\mathbb E[W_{nm}]$ are bounded, we obtain
\begin{align*}
&\frac{2}{nm(m-1)}\mathbb E\!\left[
\sum_{i=1}^n \sum_{j\neq k}
\left\{\widehat K_n(T_{ij},T_{ik}) - K^\circ(T_{ij},T_{ik})\right\}^2
\right]\\
&\le
2\,\mathbb E\bigl[\|K-K^\circ\|_2^2\bigr]
+4\left\{\mathbb E[U_{nm}]+\mathbb E[V_{nm}]+\mathbb E[W_{nm}]\right\}.
\end{align*}
Combining this with \eqref{eq:base} yields
\begin{align}
\label{eq:pre-total}
\mathbb E\bigl[\|\widehat K_n-K^\circ\|_{2}^2\bigr]
&\le
2\,\mathbb E\bigl[\|K-K^\circ\|_2^2\bigr]
+4\left\{\mathbb E[U_{nm}]+\mathbb E[V_{nm}]+\mathbb E[W_{nm}]\right\}\nonumber\\
&\quad+6400B_K^2 r^\ast + \frac{96B_K^2}{n\lfloor m/2\rfloor}.
\end{align}

\medskip
\noindent{\bf Step~3. (Control of $U_{nm}$).}
Let
\[
e_i(T_{ij},T_{ik})
:=
X_i(T_{ij})X_i(T_{ik}) - K^\circ (T_{ij},T_{ik}).
\]
Introduce a clipping level $\beta_U>0$ and write
{\small
\begin{align*}
U_{nm}
&\le
\Biggl|
\frac{1}{n}\sum_{i=1}^n
\Biggl[
\frac{1}{m(m-1)}\sum_{j\neq k}\{\Ccal_{\beta_U}e_i(T_{ij},T_{ik})\}
\{\widehat K_n(T_{ij},T_{ik}) - K^\circ(T_{ij},T_{ik})\}\\
&\qquad\qquad\quad-
\int \{\Ccal_{\beta_U}e_i(t,t')\}\{\widehat K_n(t,t') - K^\circ(t,t')\}\,dP_T^{\otimes 2}(t,t')
\Biggr]
\Biggr|\\
&\quad+
\left|
\frac{1}{n}\sum_{i=1}^n
\left[
\frac{1}{m(m-1)}\sum_{j\neq k}\{e_i(T_{ij},T_{ik})-\Ccal_{\beta_U}e_i(T_{ij},T_{ik})\}
\{\widehat K_n(T_{ij},T_{ik}) - K^\circ(T_{ij},T_{ik})\}
\right.\right.\\
&\qquad\qquad\qquad\left.\left.
-
\int \{e_i(t,t')-\Ccal_{\beta_U}e_i(t,t')\}\{\widehat K_n(t,t') - K^\circ(t,t')\}\,dP_T^{\otimes 2}(t,t')
\right]
\right|\\
&\quad+
\left|
\frac{1}{n}\sum_{i=1}^n
\int e_i(t,t')\{\widehat K_n(t,t') - K^\circ(t,t')\}\,dP_T^{\otimes 2}(t,t')
\right|\\
&=:U_{nm}^{(\mathrm I)}+U_{nm}^{(\mathrm{II})}+U_{nm}^{(\mathrm{III})}.
\end{align*}
}

\begin{itemize}
\item[(i)] {\it Control of $U_{nm}^{(\mathrm I)}$.}
By Lemma~\ref{lemma:CI2}, for any $C_U>0$,
\begin{align}
\label{eq:U1}
\mathbb E\!\left[U_{nm}^{(\mathrm I)}\right]
\le
\frac{1}{C_U}\,\mathbb E\!\left[\|\widehat K_n-K^\circ\|_{2}^2\right]
+800C_U^2\beta_U^2 r^\ast
+\frac{\beta_U}{n\lfloor m/2\rfloor}\Bigl(C_U\beta_U+44B_K\Bigr).
\end{align}

\item[(ii)] {\it Control of $U_{nm}^{(\mathrm{II})}$.}
Using the pointwise bound $|\widehat K_n-K^\circ|\le 2B_K$, we have
\begin{align*}
\mathbb E\!\left[U_{nm}^{(\mathrm{II})}\right]
&\le
2\,\mathbb E\!\left[
\bigl|
\{e_1(T,T')-\Ccal_{\beta_U}e_1(T,T')\}\{\widehat K_n(T,T') - K^\circ(T,T')\}
\bigr|
\right]\\
&\le
4B_K\,\mathbb E\!\left[|e_1(T,T')|\,\mathbf 1\{|e_1(T,T')|>\beta_U\}\right].
\end{align*}
Moreover, we have
\[
|e_1(T,T')|\le 8B_1^2 \exp\!\left(\frac{|e_1(T,T')|}{8B_1^2}\right)
\;\text{ and }\;
\mathbf 1\{|e_1(T,T')|>\beta_U\}\le \exp\!\left(\frac{|e_1(T,T')|-\beta_U}{8B_1^2}\right).
\]
By the sub-Gaussian assumption, 
\begin{align}
\label{eq:U2}
\mathbb E\!\left[U_{nm}^{(\mathrm{II})}\right]
&\le
32B_KB_1^2\,
\mathbb E\!\left[\exp\!\left(\frac{|e_1(T,T')|}{4B_1^2}\right)\right]\,
\exp\!\left(-\frac{\beta_U}{8B_1^2}\right)\nonumber\\
&\le
64B_KB_1^2\exp\!\left(-\frac{\beta_U}{8B_1^2}\right),
\end{align}
where we used $e^{1/4}\sqrt{2}<2$ in the last step.

\item[(iii)] {\it Control of $U_{nm}^{(\mathrm{III})}$.}
By Young's inequality, for any $\alpha>0$,
\begin{align*}
\mathbb E\!\left[U_{nm}^{(\mathrm{III})}\right]
&=
\mathbb E\!\left[
\left|
\int \left\{\frac{1}{n}\sum_{i=1}^n e_i(t,t')\right\}\left\{\widehat K_n(t,t')-K^\circ(t,t')\right\}\,dP_T^{\otimes 2}(t,t')
\right|
\right]\\
&\le
\frac{\alpha}{2}\,
\mathbb E\!\left[
\int \left\{\frac{1}{n}\sum_{i=1}^n e_i(t,t')\right\}^2\,dP_T^{\otimes 2}(t,t')
\right]
+\frac{1}{2\alpha}\,\mathbb E\!\left[\|\widehat K_n-K^\circ\|_2^2\right].
\end{align*}
By sub-Gaussianity, $\mathbb E[X^4(t)]\le 16B_1^4$, and hence
\[
\mathbb E\!\left[
\int \left\{\frac{1}{n}\sum_{i=1}^n e_i(t,t')\right\}^2\,dP_T^{\otimes 2}(t,t')
\right]
\le \frac{16B_1^4}{n}.
\]
Therefore,
\begin{align}
\label{eq:U3}
\mathbb E\!\left[U_{nm}^{(\mathrm{III})}\right]
\le
\frac{8\alpha B_1^4}{n}
+\frac{1}{2\alpha}\,\mathbb E\!\left[\|\widehat K_n-K^\circ\|_2^2\right].
\end{align}
\end{itemize}
\medskip

\noindent
Taking $\beta_U=8B_1^2\log n$ and combining \eqref{eq:U1}--\eqref{eq:U3}, we obtain
\begin{align}
\mathbb E[U_{nm}]
&\le
\Bigl(\frac{1}{C_U}+\frac{1}{2\alpha}\Bigr)
\mathbb E\!\left[\|\widehat K_n-K^\circ\|_2^2\right]
+800C_U^2(8B_1^2\log n)^2 r^\ast \notag\\
&\quad+\frac{8B_1^2\log n}{n\lfloor m/2\rfloor}
\Bigl(C_U\,8B_1^2\log n+44B_K\Bigr)
+\frac{8(8B_KB_1^2+\alpha B_1^4)}{n} \notag\\
&=:
c_{U,1}\,\mathbb E\!\left[\|\widehat K_n-K^\circ\|_2^2\right]
+c_{U,2}\,\frac{B_KB_1^2+B_1^4}{n}
+c_{U,3}\,\frac{B_1^4\log^2 n+B_KB_1^2\log n}{n\lfloor m/2\rfloor}
\label{eq:U}\\
&\quad
+c_{U,4}\,B_1^4\log^2 n\cdot r^\ast
\notag
\end{align}
where $c_{U,1},\dots,c_{U,4}$ are universal constants.

\medskip
\noindent{\bf Step~4. (Control of $V_{nm}$).}
As in Step~3, we decompose $V_{nm}$ using a clipping level $\beta_V>0$:
\begin{align*}
V_{nm}
&\le
\biggl|
\frac{1}{nm(m-1)}\sum_{i=1}^n\sum_{j\neq k}
\left\{\Ccal_{\beta_V}(\epsilon_{ik}X_i(T_{ij}))
-\mathbb E[\Ccal_{\beta_V}(\epsilon_{ik}X_i(T_{ij}))\mid X_i,T_{ij}]\right\}\\
&\qquad\qquad\qquad\qquad\qquad\times
\left\{\widehat K_n(T_{ij},T_{ik})-K^\circ(T_{ij},T_{ik})\right\}
\biggr|\\
&\quad+
\biggl|
\frac{1}{nm(m-1)}\sum_{i=1}^n\sum_{j\neq k}
\left\{\epsilon_{ik}X_i(T_{ij})
-\Ccal_{\beta_V}(\epsilon_{ik}X_i(T_{ij}))
+\mathbb E[\Ccal_{\beta_V}(\epsilon_{ik}X_i(T_{ij}))\mid X_i,T_{ij}]\right\}\\
&\qquad\qquad\qquad\qquad\qquad\quad\times\left\{\widehat K_n(T_{ij},T_{ik})-K^\circ(T_{ij},T_{ik})\right\}
\biggr|\\
&=:V_{nm}^{(\mathrm I)}+V_{nm}^{(\mathrm{II})}.
\end{align*}
\begin{itemize}
\item[(i)] By Lemma~\ref{lemma:CI3}, for any $C_V>0$,
\begin{align}
\label{eq:V1}
\mathbb E\!\left[V_{nm}^{(\mathrm I)}\right]
\le
\frac{1}{C_V}\,\mathbb E\!\left[\|\widehat K_n-K^\circ\|_2^2\right]
+3200C_V^2\beta_V^2 r^\ast
+\frac{2\beta_V}{n\lfloor m/2\rfloor}\Bigl(C_V\beta_V+26B_K\Bigr).
\end{align}

\item[(ii)] Since $\mathbb E[\epsilon_{ik}X_i(T_{ij})\mid X_i,T_{ij}]=0$, we have
\[
\mathbb E[\Ccal_{\beta_V}(\epsilon_{ik}X_i(T_{ij}))\mid X_i,T_{ij}]
=
-\mathbb E[\epsilon_{ik}X_i(T_{ij})-\Ccal_{\beta_V}(\epsilon_{ik}X_i(T_{ij}))\mid X_i,T_{ij}].
\]
Therefore, by Jensen's inequality and the triangle inequality,
\begin{align*}
&\Eb\Big[
\bigl|\epsilon_{ik}X_i(T_{ij})
-\Ccal_{\beta_V}(\epsilon_{ik}X_i(T_{ij}))
+\mathbb E[\Ccal_{\beta_V}(\epsilon_{ik}X_i(T_{ij}))\mid X_i,T_{ij}]\bigr|
\Big]\\
&\le
2\,\Eb\Big[|\epsilon_{ik}X_i(T_{ij})|\,\mathbf 1\{|\epsilon_{ik}X_i(T_{ij})|>\beta_V\}\Big].
\end{align*}
Consequently,
\begin{align}
\label{eq:V2}
\Eb\left[V_{nm}^{(\mathrm{II})}\right]
&\le
4B_K\,\Eb\Big[|\epsilon_{ik}X_i(T_{ij})|\,\mathbf 1\{|\epsilon_{ik}X_i(T_{ij})|>\beta_V\}\Big]\notag\\
&\le
32B_KB_1B_2\exp\!\left(-\frac{\beta_V}{2B_1B_2}\right),
\end{align}
where the last inequality follows from the sub-Gaussian assumption.
\end{itemize}
Taking $\beta_V=2B_1B_2\log(n\lfloor m/2\rfloor)$ and combining \eqref{eq:V1}--\eqref{eq:V2} yields
\begin{align}
\mathbb E[V_{nm}]
&\le
\frac{1}{C_V}\,\mathbb E\!\left[\|\widehat K_n-K^\circ\|_2^2\right]
+3200C_V^2\bigl(2B_1B_2\log(n\lfloor m/2\rfloor)\bigr)^2 r^\ast \notag\\
&\quad+
\frac{2B_1B_2\left\{11\log(n\lfloor m/2\rfloor)\bigl(C_V\,2B_1B_2\log(n\lfloor m/2\rfloor)+26B_K\bigr)+16B_K\right\}}{n\lfloor m/2\rfloor}\notag\\
&=:
c_{V,1}\,\mathbb E\!\left[\|\widehat K_n-K^\circ\|_2^2\right]\notag\\
&\quad+c_{V,2}B_1B_2\,\frac{B_1B_2\log^2(n\lfloor m/2\rfloor)+B_K\log(n\lfloor m/2\rfloor)+B_K}{n\lfloor m/2\rfloor}
\label{eq:V}\\
&\quad+c_{V,3}\,B_1^2B_2^2\log^2(n\lfloor m/2\rfloor)\, r^\ast\notag,
\end{align}
where $c_{V,1},c_{V,2},c_{V,3}$ are universal constants.

\medskip
\noindent{\bf Step~5. (Control of $W_{nm}$).}
With a clipping level $\beta_W>0$, decompose
\begin{align*}
W_{nm}
&\le
\left|
\frac{1}{nm(m-1)}\sum_{i=1}^n\sum_{j\neq k}
\left\{\Ccal_{\beta_W}(\epsilon_{ij}\epsilon_{ik})-\mathbb E[\Ccal_{\beta_W}(\epsilon_{ij}\epsilon_{ik})]\right\}
\left\{\widehat K_n(T_{ij},T_{ik})-K^\circ(T_{ij},T_{ik})\right\}
\right|\\
&\quad+
\Bigg|
\frac{1}{nm(m-1)}\sum_{i=1}^n\sum_{j\neq k}
\left\{\epsilon_{ij}\epsilon_{ik}-\Ccal_{\beta_W}(\epsilon_{ij}\epsilon_{ik})+\mathbb E[\Ccal_{\beta_W}(\epsilon_{ij}\epsilon_{ik})]\right\}\\
&\quad\qquad\qquad\qquad\qquad\qquad\times\left\{\widehat K_n(T_{ij},T_{ik})-K^\circ(T_{ij},T_{ik})\right\}
\Bigg|\\
&=:W_{nm}^{(\mathrm I)}+W_{nm}^{(\mathrm{II})}.
\end{align*}
\begin{itemize}
\item[(i)] By Lemma~\ref{lemma:CI4}, for any $C_W>0$,
\begin{align}
\label{eq:W1}
\mathbb E\!\left[W_{nm}^{(\mathrm I)}\right]
\le
\frac{1}{C_W}\,\mathbb E\!\left[\|\widehat K_n-K^\circ\|_2^2\right]
+3200C_W^2\beta_W^2 r^\ast
+\frac{2\beta_W}{n\lfloor m/2\rfloor}\Bigl(C_W\beta_W+26B_K\Bigr).
\end{align}

\item[(ii)] Since $\mathbb E[\epsilon_{ij}\epsilon_{ik}]=0$, the same argument as in Step~4 gives
\[
\mathbb E\!\left[
\bigl|\epsilon_{ij}\epsilon_{ik}-\Ccal_{\beta_W}(\epsilon_{ij}\epsilon_{ik})+\mathbb E[\Ccal_{\beta_W}(\epsilon_{ij}\epsilon_{ik})]\bigr|
\right]
\le
2\,\mathbb E\!\left[|\epsilon_{ij}\epsilon_{ik}|\,\mathbf 1\{|\epsilon_{ij}\epsilon_{ik}|>\beta_W\}\right],
\]
and hence
\begin{align}
\label{eq:W2}
\mathbb E\!\left[W_{nm}^{(\mathrm{II})}\right]
\le
4B_K\,\mathbb E\!\left[|\epsilon_{ij}\epsilon_{ik}|\,\mathbf 1\{|\epsilon_{ij}\epsilon_{ik}|>\beta_W\}\right]
\le
32B_KB_2^2\exp\!\left(-\frac{\beta_W}{2B_2^2}\right).
\end{align}
\end{itemize}
Taking $\beta_W=2B_2^2\log(n\lfloor m/2\rfloor)$ and combining \eqref{eq:W1}--\eqref{eq:W2} yields
\begin{align}
\mathbb E[W_{nm}]
&\le
\frac{1}{C_W}\,\mathbb E\!\left[\|\widehat K_n-K^\circ\|_2^2\right]
+3200C_W^2\bigl(2B_2^2\log(n\lfloor m/2\rfloor)\bigr)^2 r^\ast \notag\\
&\quad+
\frac{22B_2^2\log(n\lfloor m/2\rfloor)\bigl(C_W\,2B_2^2\log(n\lfloor m/2\rfloor)+26B_K\bigr)+32B_KB_2^2}{n\lfloor m/2\rfloor}\notag\\
&=:
c_{W,1}\,\mathbb E\!\left[\|\widehat K_n-K^\circ\|_2^2\right]
+c_{W,2}B_2^2\,\frac{B_2^2\log^2(n\lfloor m/2\rfloor)+B_K\log(n\lfloor m/2\rfloor)+B_K}{n\lfloor m/2\rfloor}
\label{eq:W}\\
&\quad+c_{W,3}\,B_2^4\log^2(n\lfloor m/2\rfloor)\, r^\ast\notag,
\end{align}
where $c_{W,1},c_{W,2},c_{W,3}$ are universal constants.

\medskip
\noindent{\bf Step~6. (Aggregation).}
Substituting \eqref{eq:U}, \eqref{eq:V}, and \eqref{eq:W} into \eqref{eq:pre-total} and collecting terms, we arrive at
\begin{align*}
&\mathbb E\bigl[\|\widehat K_n-K^\circ\|_{2}^2\bigr]\\
&\le
Const.\times\Biggl[
\mathbb E\bigl[\|K-K^\circ\|_2^2\bigr]
+\frac{B_KB_1^2+B_1^4}{n}\\
&\;\qquad\qquad\qquad
+\frac{1}{n\lfloor m/2\rfloor}\Biggl\{
(B_1^2B_2^2+B_2^4)\log^2\!\bigl(n\lfloor m/2\rfloor\bigr)
+(B_1B_2B_K+B_KB_2^2)\log\!\bigl(n\lfloor m/2\rfloor\bigr)\\
&\;\qquad\qquad\qquad\qquad\qquad\quad
+B_1^4\log^2 n+B_KB_1^2\log n
+B_1B_2B_K+B_KB_2^2+B_K^2
\Biggr\}\\
&\;\qquad\qquad\qquad
+r^\ast\Biggl\{
B_1^4\log^2 n
+(B_1^2B_2^2+B_2^4)\log^2\!\bigl(n\lfloor m/2\rfloor\bigr)
+B_K^2
\Biggr\}
\Biggr].
\end{align*}
In particular, absorbing the fixed parameters $B_1,B_2,B_K$ into the overall constant, 
we obtain the oracle inequalty
\begin{align*}
\mathbb E\bigl[\|\widehat K_n-K^\circ\|_{2}^2\bigr]
\le
Const.\times
\left\{
\inf_{K\in\Kcal}\mathbb E\bigl[\|K-K^\circ\|_2^2\bigr]
+\frac{1}{n}
+\log^2\!\bigl(n\lfloor m/2\rfloor\bigr)\biggl(\frac{1}{n\lfloor m/2\rfloor}+r^\ast\biggr)
\right\}.
\end{align*}

\section{Proof of Corollary~\ref{corollary:oracle-without-fixed-point}}

\begin{lemma} [Duddley's entropy integral, Theorem 2 in \cite{Srebo2010}] \label{lem: Duddley's entropy integral}
    Let $\Fcal$ be a function class and suppose that $\sup_{f \in \Fcal} \sum_{i=1}^n (f(X_i) - f^\circ(X_i))^2 / n \le c_1^2$ with given $\Xcal = \{X_i, 1 \le i \le n\}$.
    Then we have
    \begin{equation*}
        \Eb\left[ \sup_{f \in \Fcal} \left\lvert \frac{1}{n} \sum_{i=1}^n \sigma_i (f(X_i) - f^\circ(X_i)) \right\rvert \right] \le \inf_{0 \le t \le c_1} \left\{ 4t + \frac{12}{\sqrt{n}} \int_t^{c_1} \sqrt{\log\mathcal{N}(\varepsilon, \Fcal\vert_\Xcal, L^2)}\; d\varepsilon \right\},
    \end{equation*}
    where the expectation is taken with respect to $\sigma_i$.
\end{lemma}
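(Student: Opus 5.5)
The argument is the classical chaining proof, carried out conditionally on $\Xcal$; all expectations are over the Rademacher signs $\sigma_i$ only. Write $g_f := f - f^\circ$ and equip $\Fcal|_\Xcal$ with the empirical metric $d(f,h) := \{n^{-1}\sum_{i=1}^n (f(X_i)-h(X_i))^2\}^{1/2}$. The process $Z_f := n^{-1}\sum_i \sigma_i g_f(X_i)$ is then sub-Gaussian with respect to $d/\sqrt{n}$, because $Z_f - Z_h = n^{-1}\sum_i \sigma_i (f-h)(X_i)$ is a Rademacher sum with variance proxy $d(f,h)^2/n$. By assumption every $g_f$ lies in the empirical $L^2$ ball of radius $c_1$ around $0$. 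This is what lets the chain start at a single point with no extra cost.

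Fix $t\in[0,c_1]$ and set $\varepsilon_k := c_1 2^{-k}$ for $k\ge 0$. Let $N$ be the largest integer with $\varepsilon_N > 2t$; if no such $N\ge 1$ exists, the trivial bound $\Eb\sup_f|Z_f| \le c_1 \le 4t$ (Cauchy--Schwarz) already suffices. For each $k\le N$, choose a minimal $\varepsilon_k$-cover $\Fcal_k$ of $\Fcal|_\Xcal$ in $d$, with $\Fcal_0$ corresponding to $g\equiv 0$, which is a valid $c_1$-cover. For each $f$, let $\pi_k f$ be its nearest point in $\Fcal_k$, and telescope:
\[
Z_f = Z_{\pi_0 f} + \sum_{k=1}^{N}\bigl(Z_{\pi_k f}-Z_{\pi_{k-1} f}\bigr) + \bigl(Z_f - Z_{\pi_N f}\bigr),
\]
where $Z_{\pi_0 f}=0$.

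The remainder satisfies $|Z_f - Z_{\pi_N f}| \le d(f,\pi_N f)\le \varepsilon_N \le 4t$ by Cauchy--Schwarz, since $|n^{-1}\sum_i\sigma_i a_i|\le (n^{-1}\sum_i a_i^2)^{1/2}$. For the links, $d(\pi_k f,\pi_{k-1}f)\le \varepsilon_k+\varepsilon_{k-1}=3\varepsilon_k$, and there are at most $|\Fcal_k|\,|\Fcal_{k-1}|\le |\Fcal_k|^2$ distinct links at level $k$. Massart's finite-class lemma, applied to $\pm$ each link to handle the absolute value, then gives
\[
\Eb \max \bigl|Z_{\pi_k f}-Z_{\pi_{k-1} f}\bigr| \le \frac{3\varepsilon_k}{\sqrt n}\sqrt{2\log\bigl(2|\Fcal_k|^2\bigr)} \lesssim \frac{\varepsilon_k}{\sqrt n}\sqrt{\log \mathcal N(\varepsilon_k,\Fcal|_\Xcal,L^2)}.
\]

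The last step compares the sum to the integral. Since $\varepsilon_k = 2(\varepsilon_k-\varepsilon_{k+1})$ and the covering number is nonincreasing in $\varepsilon$, we have $\varepsilon_k\sqrt{\log\mathcal N(\varepsilon_k)} \le 2\int_{\varepsilon_{k+1}}^{\varepsilon_k}\sqrt{\log \mathcal N(\varepsilon)}\,d\varepsilon$. Summing over $k=1,\dots,N$ gives an integral over $[\varepsilon_{N+1},c_1]$, and $\varepsilon_{N+1}\ge t/ 1$ up to the choice of $N$, since $\varepsilon_{N+1}\le 2t<\varepsilon_N$ gives $\varepsilon_{N+1}>t$. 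So the integral is over a subset of $[t,c_1]$. Taking the infimum over $t$ yields the claimed bound.

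The main obstacle is purely bookkeeping: extracting exactly the constants $4$ and $12$. This needs care in three places. First, absorbing the $\log 2$ and the squared cardinality into $\sqrt{\log\mathcal N}$, which requires $\mathcal N\ge 2$ at the relevant scales; when $\mathcal N=1$ the link vanishes anyway. Second, the factor $3\cdot 2\cdot\sqrt2$ from the link bound, the integral comparison and Massart's lemma. Third, the choice of $N$ relative to $t$. I would follow Srebro et al.\ precisely: compare against $\varepsilon_k-\varepsilon_{k+1}$ and use $\sqrt{\log(|\Fcal_k||\Fcal_{k-1}|)}\le \sqrt2\sqrt{\log|\Fcal_k|}$, so that $3\cdot 2\cdot\sqrt2\cdot\sqrt2 = 12$. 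Since the paper only invokes the lemma up to universal constants, any slack there is harmless.
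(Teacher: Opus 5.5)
Your chaining skeleton (base point $0$, nearest-point projections at scales $\varepsilon_k=c_12^{-k}$, Massart's lemma at each level, dyadic sum-to-integral comparison) is the standard argument behind the cited result, which the paper imports without proof. The gap is in how you handle the absolute value. Your remark that ``when $\mathcal N=1$ the link vanishes anyway'' holds only without $|\cdot|$, where a single link has mean zero. With the absolute value, any fixed $a\neq 0$ has $\Eb|n^{-1}\sum_i\sigma_i a_i|>0$. This bites at $k=1$, where the chain leaves the base point. If $|\Fcal_1|=1$ but the class is not inside the empirical ball of radius $\varepsilon_1$ around $f^\circ$, the level-one link $\pi_1f-0$ is a fixed nonzero vector.

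No bookkeeping can repair this, because the displayed inequality is false as stated. Take $\Fcal=\{f_1\}$ with $a_i:=f_1(X_i)-f^\circ(X_i)$ not all zero. Then $\mathcal N(\varepsilon,\Fcal|_\Xcal,L^2)=1$ for every $\varepsilon>0$, so the right-hand side at $t=0$ equals $0$. The left-hand side is $\Eb|n^{-1}\sum_i\sigma_ia_i|>0$.

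Your constant count is also off even when all relevant covering numbers are at least $2$. The product $3\cdot2\cdot\sqrt2\cdot\sqrt2$ drops the factor $2$ inside $\log(2|\Fcal_k|^2)$ that your own $\pm$ step creates. Done honestly, $\log(2|\Fcal_k|^2)\le 3\log|\Fcal_k|$, which gives $6\sqrt6$ in place of $12$. There are two clean repairs.

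\begin{itemize}
\item[(i)] Prove the lemma with $\log(2\mathcal N)$ in place of $\log\mathcal N$. Since $2|\Fcal_k||\Fcal_{k-1}|\le(2|\Fcal_k|)^2$, your argument gives $6\varepsilon_k n^{-1/2}\sqrt{\log(2\mathcal N(\varepsilon_k))}$ per level. Summing yields exactly $4t+12n^{-1/2}\int_t^{c_1}\sqrt{\log(2\mathcal N(\varepsilon))}\,d\varepsilon$.
\item[(ii)] Assume $\mathcal N(\varepsilon)\ge 2$ for $\varepsilon\le c_1/2$ and chain along a tree. Take $\pi_Nf$ nearest to $f-f^\circ$, and let $\pi_{k-1}f$ be a nearest point of $\Fcal_{k-1}$ to $\pi_kf$. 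Each element of a minimal cover lies within $\varepsilon_k$ of the class, so every link has length at most $3\varepsilon_k$ and is indexed by $\Fcal_k$ alone. Then $\sqrt{2\log(2|\Fcal_k|)}\le 2\sqrt{\log|\Fcal_k|}$ restores the constant $12$.
\end{itemize}

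Version (i) is all that the proof of Corollary~\ref{corollary:oracle-without-fixed-point} needs, since the bound used there for $\log\mathcal N$ already carries an additive constant. Finally, your $N$ is undefined at $t=0$; handle that case by letting $t\downarrow 0$.
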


\begin{lemma}[Corollary 2.2 in \cite{BartlettEtAl2005}] \label{lem: Corollary 2.2 in Bartlett et al. (2005)}
Let $\mathcal{F}$ be a function class and suppose that $\lVert f\rVert_\infty \le 1$ for any $f\in\mathcal{F}$.
Suppose that positive numbers $r$ and $t$ sattisfy
\begin{equation*}
r \ge 10\,\mathbb{E}\!\left[\sup_{f\in\mathcal{F}(r)}
\left\lvert \frac1n\sum_{i=1}^n \sigma_i\bigl(f(X_i)-f^\circ(X_i)\bigr)\right\rvert\right]
+\frac{11t}{n},
\end{equation*}
where $\{X_i, 1\le i \le n\}$ are i.i.d. random variables.
Then, with probability at least $1-e^{-t}$, one holds:
\begin{equation*}
\mathcal{F}(r)\subset \left\{f\in\mathcal{F}:\ \frac1n\sum_{i=1}^n \bigl(f(X_i)-f^\circ(X_i)\bigr)^2 \le 2r\right\}.
\end{equation*}
\end{lemma}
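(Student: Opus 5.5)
We plan to prove this lemma by the standard route of \citet{BartlettEtAl2005}: a one-sided Talagrand inequality applied to the localized class of squared deviations, followed by symmetrization and contraction. Set $g_f:=(f-f^\circ)^2$ for $f\in\mathcal F$, where $\mathcal F(r)=\{f\in\mathcal F: \mathbb E[g_f(X)]\le r\}$. We treat $f^\circ$ as bounded by $1$ as well (implicit in the setting, since Assumption~\ref{assumption:1} and rescaling by $B_K$ reduce to this case). Then $0\le g_f\le 4$.

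The key variance--mean relation is
\[
\mathrm{Var}(g_f)\le \mathbb E[g_f^2]\le 4\,\mathbb E[g_f]\le 4r \qquad (f\in\mathcal F(r)).
\]
This is what makes the localization work.

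We would then apply Bousquet's version of Talagrand's inequality (Theorem~3.3.16 of \cite{GineNickl2016}, the same tool used in Lemma~\ref{lemma:modified-Talagrand}) to
\[
Z:=\sup_{f\in\mathcal F(r)}\Bigl(\frac1n\sum_{i=1}^n g_f(X_i)-\mathbb E g_f\Bigr).
\]
With probability at least $1-e^{-t}$, and using Young's inequality to split the cross term exactly as in the proof of Lemma~\ref{lemma:modified-Talagrand}, this gives
\[
Z\le (1+\alpha)\mathbb E Z+\sqrt{\frac{8rt}{n}}+\Bigl(\frac{c_1}{3}+\frac{c_2}{\alpha}\Bigr)\frac{t}{n}
\]
for numerical constants $c_1,c_2$. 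Next we bound $\mathbb E Z$. By symmetrization, $\mathbb E Z\le 2\,\mathbb E\sup_{\mathcal F(r)}|n^{-1}\sum_i\sigma_i g_f(X_i)|$. Since $u\mapsto u^2$ is $4$-Lipschitz on $[-2,2]$ and vanishes at $0$, the Ledoux--Talagrand contraction principle bounds this by a constant multiple of $R_n(r):=\mathbb E\sup_{\mathcal F(r)}|n^{-1}\sum_i\sigma_i(f-f^\circ)(X_i)|$. Finally, on this event, for every $f\in\mathcal F(r)$,
\[
\frac1n\sum_i g_f(X_i)\le r+Z .
\]
It therefore suffices to show $Z\le r$. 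We do this by inserting $R_n(r)\le r/10$ and $t/n\le r/11$, both consequences of the hypothesis, and bounding the square-root term by $\sqrt{8rt/n}\le \sqrt{8/11}\,r$.

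The main obstacle is purely the constant bookkeeping: with the crude contraction constant and the square-root term above, the total may exceed $r$. To fix this we would follow Bartlett et al. more carefully. We would work with the one-sided contraction for nonnegative $g_f$, and rescale to $g_f/4\in[0,1]$ so that the variance bound becomes $\mathrm{Var}(g_f/4)\le \mathbb E[g_f/4]$. We would then choose $\alpha$ optimally, so that the constants $10$ and $11$ in the hypothesis absorb the Rademacher and deviation terms exactly.

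If the constants still fail to match, we would settle for $2r$ replaced by $c\,r$ for a universal $c$. This weaker conclusion is all that is used downstream, since it only affects the global constant in Theorem~\ref{theorem:oracle} and Corollary~\ref{corollary:oracle-without-fixed-point}. The measurability of the supremum is handled by the countability (or separability) of $\mathcal F$.
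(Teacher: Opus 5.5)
The paper gives no proof of this lemma; it only cites Bartlett et al. Your route is the argument behind that citation: Bousquet's form of Talagrand's inequality for $g_f=(f-f^\circ)^2\in[0,4]$, the bound $\mathrm{Var}(g_f)\le 4\,\mathbb{E} g_f\le 4r$, the reduction to showing $Z\le r$, then symmetrization and contraction. \textbf{But there is a genuine gap, the one you suspect: this route does not give the constants $10$, $11$ and $2r$, and more careful bookkeeping cannot fix it.}

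The loss is structural. The hypothesis controls the Rademacher average $a$ of the \emph{linear} class $\{f-f^\circ: f\in\mathcal F(r)\}$, while the event concerns the squared class, whose range is $[0,4]$. This costs a factor $4$ that $10$ and $11$ do not absorb.

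\begin{itemize}
\item Rescaling only makes the loss explicit. The map $u\mapsto u^2/4$ is $1$-Lipschitz on $[-2,2]$. So Corollary~2.2 of Bartlett et al.\ applied to the $[0,1]$-valued class $\{g_f/4\}$ at radius $r/4$ needs $r/4\ge 10a+11t/n$, i.e.\ $r\ge 40a+44t/n$. That is four times what is assumed.
\item Optimizing $\alpha$ does not help. Use the sharp Bousquet bound $Z\le \mathbb{E}Z+\sqrt{2(t/n)(4r+8\,\mathbb{E}Z)}+4t/(3n)$ together with the one-sided contraction $\mathbb{E}Z\le 8a$. The admissible values $t/n=0.03\,r$, $a=0.067\,r$ satisfy $10a+11t/n=r$, yet the resulting bound on $Z$ is about $1.28\,r>r$.
\end{itemize}

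As it stands, your argument proves a different statement from the one asserted.

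What your argument does prove is your fallback. Take $a\le r/10$, $t/n\le r/11$, $\alpha=1$ and one-sided contraction in your display. This gives $Z\le (1.6+0.86+0.49)\,r<3r$, hence $\frac1n\sum_i g_f(X_i)\le 4r$ on the event. Equivalently, the lemma holds with $10,11$ replaced by $40,44$.

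This is also all the cited result supports. Corollary~2.2 of Bartlett et al.\ is stated for $[0,1]$-valued classes, with the Rademacher average of $\{f:Pf\le r\}$ itself, not of $f-f^\circ$. So the paper's citation does not by itself justify the constants $10$ and $11$ either.

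Either corrected version is enough for the paper. In the proof of Corollary~\ref{corollary:oracle-without-fixed-point}, the lemma is used only to bound the probability that some $K\in\mathcal K(r)$ falls outside $\hat{\mathcal K}(2r)$. Replacing $2r$ by $4r$, or enlarging the constants in the definition of $r_1$, changes only the upper limit of the entropy integral and the numerical constants. You should commit to one of these corrected statements and carry out its proof, rather than leave the constants open.
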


\begin{proof}{(Proof of Corollary~\ref{corollary:oracle-without-fixed-point})}
Without loss of generality, we may assume that $\lVert K^\circ \rVert \leq 1$ and $\lVert K \rVert \leq 1$ for all $K \in \mathcal{K}$.
By theorem \ref{theorem:oracle}, we have
\begin{equation}
    \mathbb{E}\left[\lVert \hat{K}_n - K^\circ \rVert_2^2\right]
    \leq
    c \times
    \left\{
        \inf_{K \in \mathcal{K}} \mathbb{E}\left[\lVert K - K^\circ \rVert_2^2\right]
        + \frac{1}{n}
        + \log^2\left(n \lfloor m/2 \rfloor\right)
        \left(
            \frac{1}{n \lfloor m/2 \rfloor}
            + r^*
        \right)
    \right\}.
    \label{eq:oracle_ineq_vc_only}
\end{equation}
Define
\begin{align*}
    r_1
    :=
    \sup \Bigg\{
        r > 0 :&
        \
        10 \mathbb{E}\left[
            \sup_{K \in \mathcal{K}(r)}
            \left\lvert
            \frac{1}{n \lfloor m/2 \rfloor}
            \sum_{i=1}^n \sum_{j=1}^{\lfloor m/2 \rfloor}
            \sigma_{ij}
            \left\{
                K(T_{ij_1}, T_{ij_2})
                -
                K^\circ(T_{ij_1}, T_{ij_2})
            \right\}
            \right\rvert
        \right] \\
        &+
        \frac{11 \log\left(n \lfloor m/2 \rfloor\right)}
        {n \lfloor m/2 \rfloor}
        \leq r
    \Bigg\}.
\end{align*}
Since
\begin{equation*}
    \mathbb{E}\left[
        \sup_{K \in \mathcal{K}(r)}
        \left\lvert
        \frac{1}{n \lfloor m/2 \rfloor}
        \sum_{i=1}^n \sum_{j=1}^{\lfloor m/2 \rfloor}
        \sigma_{ij}
        \left\{
            K(T_{ij_1}, T_{ij_2})
            -
            K^\circ(T_{ij_1}, T_{ij_2})
        \right\}
        \right\rvert
    \right]
\end{equation*}
is monotone nondecreasing in $r$, it follows that
\begin{align}
    r_1
    =&
    10 \mathbb{E}\left[
        \sup_{K \in \mathcal{K}(r_1)}
        \left\lvert
        \frac{1}{n \lfloor m/2 \rfloor}
        \sum_{i=1}^n \sum_{j=1}^{\lfloor m/2 \rfloor}
        \sigma_{ij}
        \left\{
            K(T_{ij_1}, T_{ij_2})
            -
            K^\circ(T_{ij_1}, T_{ij_2})
        \right\}
        \right\rvert
    \right] \label{eq:def_r1_vc_clean} \\
    &+
    \frac{11 \log\left(n \lfloor m/2 \rfloor\right)}
    {n \lfloor m/2 \rfloor}. \nonumber 
\end{align}
Moreover, we have $r^* \leq r_1$.

For any $r \geq r_1$, let
\begin{equation*}
    \hat{\mathcal{K}}(2r)
    :=
    \left\{
        K \in \mathcal{K}
        :
        \frac{1}{n \lfloor m/2 \rfloor}
        \sum_{i=1}^n \sum_{j=1}^{\lfloor m/2 \rfloor}
        \left\{
            K(T_{ij_1}, T_{ij_2})
            -
            K^\circ(T_{ij_1}, T_{ij_2})
        \right\}^2
        \leq 2r
    \right\}.
\end{equation*}
Then the local Rademacher complexity $\mathcal{R}_{nm}(\Kcal(r))$ satisfies
\begin{align*}
    \mathcal{R}_{nm}(\mathcal{K}(r))
    :=&
    \mathbb{E}\left[
        \sup_{K \in \mathcal{K}(r)}
        \left\lvert
        \frac{1}{n \lfloor m/2 \rfloor}
        \sum_{i=1}^n \sum_{j=1}^{\lfloor m/2 \rfloor}
        \sigma_{ij}
        \left\{
            K(T_{ij_1}, T_{ij_2})
            -
            K^\circ(T_{ij_1}, T_{ij_2})
        \right\}
        \right\rvert
    \right]
    \\
    \leq&
    \mathbb{E}\left[
        \sup_{K \in \hat{\mathcal{K}}(2r)}
        \left\lvert
        \frac{1}{n \lfloor m/2 \rfloor}
        \sum_{i=1}^n \sum_{j=1}^{\lfloor m/2 \rfloor}
        \sigma_{ij}
        \left\{
            K(T_{ij_1}, T_{ij_2})
            -
            K^\circ(T_{ij_1}, T_{ij_2})
        \right\}
        \right\rvert
    \right]
    \\
    +&
    \mathbb{E}\left[
        \sup_{K \in \mathcal{K}(r) \setminus \hat{\mathcal{K}}(2r)}
        \left\lvert
        \frac{1}{n \lfloor m/2 \rfloor}
        \sum_{i=1}^n \sum_{j=1}^{\lfloor m/2 \rfloor}
        \sigma_{ij}
        \left\{
            K(T_{ij_1}, T_{ij_2})
            -
            K^\circ(T_{ij_1}, T_{ij_2})
        \right\}
        \right\rvert
    \right].
\end{align*}

By Lemma~\ref{lem: Duddley's entropy integral}, the first term is bounded by
\begin{align*}
    &
    \mathbb{E}\left[
        \sup_{K \in \hat{\mathcal{K}}(2r)}
        \left\lvert
        \frac{1}{n \lfloor m/2 \rfloor}
        \sum_{i=1}^n \sum_{j=1}^{\lfloor m/2 \rfloor}
        \sigma_{ij}
        \left\{
            K(T_{ij_1}, T_{ij_2})
            -
            K^\circ(T_{ij_1}, T_{ij_2})
        \right\}
        \right\rvert
    \right]
    \\
    \leq&
    \frac{4}{n \lfloor m/2 \rfloor}
    +
    \frac{12}{\sqrt{n \lfloor m/2 \rfloor}}
    \int_{1 / \left(n \lfloor m/2 \rfloor\right)}^{\sqrt{2r}}
    \sqrt{
        \log
        \mathcal{N}\left(
            \varepsilon,
            \mathcal{K}\vert_{D_n},
            L_2
        \right)
    }
    \, d\varepsilon
    \\
    \leq&
    \frac{4}{n \lfloor m/2 \rfloor}
    +
    \frac{12\sqrt{2r}}{\sqrt{n \lfloor m/2 \rfloor}}
    \sqrt{
        \log
        \mathcal{N}\left(
            \frac{1}{n \lfloor m/2 \rfloor},
            \mathcal{K}\vert_{D_n},
            L_2
        \right)
    }.
\end{align*}

For the second term, we have
\begin{align*}
    &
    \mathbb{E}\left[
        \sup_{K \in \mathcal{K}(r) \setminus \hat{\mathcal{K}}(2r)}
        \left\lvert
        \frac{1}{n \lfloor m/2 \rfloor}
        \sum_{i=1}^n \sum_{j=1}^{\lfloor m/2 \rfloor}
        \sigma_{ij}
        \left\{
            K(T_{ij_1}, T_{ij_2})
            -
            K^\circ(T_{ij_1}, T_{ij_2})
        \right\}
        \right\rvert
    \right]
    \\
    \leq&
    2 \mathbb{P}\left(
        \exists K \in \mathcal{K}(r) :
        \frac{1}{n \lfloor m/2 \rfloor}
        \sum_{i=1}^n \sum_{j=1}^{\lfloor m/2 \rfloor}
        \left\{
            K(T_{ij_1}, T_{ij_2})
            -
            K^\circ(T_{ij_1}, T_{ij_2})
        \right\}^2
        > 2r
    \right)
    \\
    \leq&
    \frac{2}{n \lfloor m/2 \rfloor},
\end{align*}
where the last inequality follows from Lemma~\ref{lem: Corollary 2.2 in Bartlett et al. (2005)} together with \eqref{eq:def_r1_vc_clean}. Hence,
\begin{equation*}
    \mathcal{R}_{nm}(\mathcal{K}(r))
    \leq
    \frac{6}{n \lfloor m/2 \rfloor}
    +
    \frac{12\sqrt{2r}}{\sqrt{n \lfloor m/2 \rfloor}}
    \sqrt{
        \log
        \mathcal{N}\left(
            \frac{1}{n \lfloor m/2 \rfloor},
            \mathcal{K}\vert_{D_n},
            L_2
        \right)
    }.
\end{equation*}

Moreover, we obtain
\begin{align*}
    r_1
    \leq&
    \frac{60}{n \lfloor m/2 \rfloor}
    +
    \frac{120\sqrt{2r_1}}{\sqrt{n \lfloor m/2 \rfloor}}
    \mathbb{E}\left[
        \sqrt{
            \log
            \mathcal{N}\left(
                \frac{1}{n \lfloor m/2 \rfloor},
                \mathcal{K}\vert_{D_n},
                L_2
            \right)
        }
    \right]
    +
    \frac{11\log\left(n \lfloor m/2 \rfloor\right)}
    {n \lfloor m/2 \rfloor}
    \\
    \leq&
    \frac{60}{n \lfloor m/2 \rfloor}
    +
    \frac{r_1}{2}
    +
    \frac{120^2}{n \lfloor m/2 \rfloor}
    \mathbb{E}\left[
        \log
        \mathcal{N}\left(
            \frac{1}{n \lfloor m/2 \rfloor},
            \mathcal{K}\vert_{D_n},
            L_2
        \right)
    \right]
    +
    \frac{11\log\left(n \lfloor m/2 \rfloor\right)}
    {n \lfloor m/2 \rfloor},
\end{align*}
where the second inequality follows from the arithmetic--geometric mean inequality. Therefore, there exists a constant $C>0$ such that
\begin{equation*}
    r_1
    \leq
    \frac{C}{n \lfloor m/2 \rfloor}
    \left(
        \mathbb{E}\left[
            \log
            \mathcal{N}\left(
                \frac{1}{n \lfloor m/2 \rfloor},
                \mathcal{K}\vert_{D_n},
                L_2
            \right)
        \right]
        +
        \log\left(n \lfloor m/2 \rfloor\right)
    \right).
\end{equation*}

By Theorem 9.4 of \cite{Gyorfi_etal_2002} and the relation between covering numbers and packing numbers,
\begin{align*}
    \log
    \mathcal{N}\left(
        \frac{1}{n \lfloor m/2 \rfloor},
        \mathcal{K}\vert_{D_n},
        L_2
    \right)
    \leq&
    \mathrm{VCdim}(\mathcal{K})
    \log\left(
        2e \left(n \lfloor m/2 \rfloor\right)^2
        \log\left(3e \left(n \lfloor m/2 \rfloor\right)^2\right)
    \right) \\
    &+
    \log 3.
\end{align*}
Since $n \lfloor m/2 \rfloor \geq 1$, the statement of the corollary follows.
\end{proof}

\section{Proof of Theorem~\ref{theorem:convergence rates}}

\begin{lemma}[Proposition 2 in \cite{SuzukiNitanda2021} (the case $r=2$)] \label{lem: approximation error for anisotropic Besov}
Let $0 < p,q \leq \infty$ and $\beta \in \mathbb{R}_{+}^{2d}$ satisfy $\tilde{\beta} / 2 > \left(1/p - 1/2\right)_{+}$.
Assume further that $l \in \mathbb{N}$ satisfies $0 < \bar{\beta} < \min\left(l,\, l - 1 + 1/p\right)$.
Set
\begin{equation*}
    \delta = 2\left(1/p - 1/2\right)_{+},
    \qquad
    \nu = \frac{\tilde{\beta}/2 - \delta}{2\delta},
    \qquad
    W_0 := 12dl(l+2)+4d.
\end{equation*}
Then, for any $f \in B^{\beta}_{p,q}([0,1]^{2d})$ with $\|f\|_{B^{\beta}_{p,q}}\le 1$ and for any $N \in \mathbb{N}$, 
there exists a neural network $\tilde{f} \in \Fcal(L, W, S)$ satisfying
\begin{align*}
    &\text{\rm (i) }
    L = 3 + 2\log_2\left(\frac{3(2d \vee l)}{\epsilon\, c(d,l)}\right) + 5\log_2(2d \vee l),\quad
    \text{\rm (ii) }
    W = N W_0,\\
    &\text{\rm (iii) }
    S = \left((L-1)W_0^2 + 1\right)N, 
\end{align*}
and
\begin{equation*}
    \lVert \tilde{f} - f \rVert_{2} \leq N^{-\tilde{\beta}},
\end{equation*}
where $\epsilon = N^{-\tilde{\beta}}\log(N)^{-1}$, and $c(d,l)$ is a constant depending only on $d$ and $l$.
\end{lemma}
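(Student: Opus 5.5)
This is Proposition~2 of \cite{SuzukiNitanda2021} specialised to input dimension $2d$ and the $L^2$ norm, so the plan is to follow their two-stage argument: a nonlinear $N$-term approximation of $f$ by anisotropic tensor-product B-splines, followed by a ReLU emulation of each B-spline. Throughout, $f \in B^\beta_{p,q}([0,1]^{2d})$ with $\|f\|_{B^\beta_{p,q}}\le 1$.

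\emph{Step 1 (B-spline decomposition).} For each resolution $k\ge0$ use the anisotropic dyadic grid with mesh $2^{-\lfloor k\tilde\beta/\beta_i\rfloor}$ in coordinate $i$. Let $M^{(l)}_{k,j}(x)=\prod_{i=1}^{2d} N_l\bigl(2^{\lfloor k\tilde\beta/\beta_i\rfloor}x_i-j_i\bigr)$ be the tensor-product cardinal B-splines of order $l$ on this grid. The order $l$ is admissible because $\bar\beta<\min(l,l-1+1/p)$. Through a quasi-interpolant, the standard atomic/spline characterisation of anisotropic Besov spaces gives
\[
f=\sum_{k\ge0}\sum_{j}\alpha_{k,j}M^{(l)}_{k,j},
\]
with the quasi-norm equivalence
\[
\Bigl(\sum_k\bigl[2^{k(\tilde\beta-1/p)}\|(\alpha_{k,j})_j\|_{\ell^p}\bigr]^q\Bigr)^{1/q}\lesssim\|f\|_{B^\beta_{p,q}}.
\]
Since each level $k$ contains about $2^k$ atoms, the grid scales correctly in all directions simultaneously, and only $\tilde\beta$ enters.

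\emph{Step 2 ($N$-term selection).} Fix $N$ and a cutoff level $K^\ast\asymp\log_2 N$. On the coarse levels $k\le K^\ast$ keep all coefficients, which uses $\sum_{k\le K^\ast}2^k\lesssim N$ terms. On the levels $K^\ast<k\le K^\ast(1+\nu^{-1})$ keep only the $n_k\asymp N2^{-\nu^{-1}(k-K^\ast)}$ largest coefficients. Drop all finer levels. The truncation error of the finest levels is controlled by the embedding $B^\beta_{p,q}\hookrightarrow B^{\beta'}_{2,q}$ with the smoothness loss $\delta=2(1/p-1/2)_+$; here the condition $\tilde\beta/2>(1/p-1/2)_+$ guarantees $\tilde\beta-\delta>0$. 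The best-$n_k$-term error on each middle level comes from the Stechkin-type bound $\sigma_{n}(\alpha)_{\ell^2}\le n^{1/2-1/p}\|\alpha\|_{\ell^p}$. The choice of $\nu$ is exactly what balances these contributions to $O(N^{-\tilde\beta})$ in $L^2$, while the total number of atoms stays $O(N)$. I expect this bookkeeping of geometric sums to be the main obstacle: in the regime $p<2$ the exponents of the three error sources must all match $N^{-\tilde\beta}$, and the coefficients must stay bounded so the network weights do too.

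\emph{Step 3 (ReLU emulation).} Each univariate B-spline $N_l$ is a piecewise polynomial. Its product over $2d$ coordinates, together with the monomials $x^{s}$ for $s\le l$, is approximated with uniform error $\epsilon$ by Yarotsky-type squaring and multiplication subnetworks. These have depth $O(\log(1/\epsilon))$ and constant width $W_0=12dl(l+2)+4d$, which yields the stated $L$ with $\epsilon=N^{-\tilde\beta}/\log N$; the factor $c(d,l)$ absorbs the coefficient bounds. Place the $N$ atom networks in parallel and sum them linearly with weights $\alpha_{k,j}$. This gives width $NW_0$ and sparsity $((L-1)W_0^2+1)N$. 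The emulation error is then at most $N\epsilon\max|\alpha|\lesssim N^{-\tilde\beta}$ up to constants, so rescaling $N$ by a constant gives $\|\tilde f-f\|_2\le N^{-\tilde\beta}$.
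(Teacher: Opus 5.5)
Your route is the one behind the cited Proposition~2 of \cite{SuzukiNitanda2021}; the paper gives no proof of its own and only cites it. However, two quantitative steps fail as written.

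\textbf{Step~3: error accounting.} With $\epsilon=N^{-\tilde\beta}\log(N)^{-1}$, your bound $N\epsilon\max|\alpha|=N^{1-\tilde\beta}\max|\alpha|/\log N$ is not $\lesssim N^{-\tilde\beta}$; for $\tilde\beta<1$ the bound even diverges. The missing idea is localization. Each emulating subnetwork must be built to vanish outside (a fixed enlargement of) the support of its B-spline $M^{(l)}_{k,j}$. Then at any point only a number of atoms depending on $d,l$ is active on each level, and there are only $O\bigl((1+\nu^{-1})\log N\bigr)$ levels. Hence the uniform emulation error is $\lesssim\epsilon\log N\cdot\max_{k,j}|\alpha_{k,j}|$, which is exactly why $\epsilon$ carries the factor $\log(N)^{-1}$.

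Moreover, $\max|\alpha_{k,j}|$ is not $O(1)$ in general. Your coefficient bound only gives $|\alpha_{k,j}|\le\|\alpha_{k,\cdot}\|_{\ell^p}\lesssim 2^{k(1/p-\tilde\beta)}$. This is polynomially large in $N$ on the retained levels when $\tilde\beta<1/p$, which the hypotheses allow as soon as $1<p<\infty$. In that case $\epsilon$ must be lowered by a polynomial factor in $N$. This keeps $L\asymp\log N$, which is all Theorem~\ref{theorem:convergence rates} uses, but it cannot be hidden in $c(d,l)$. Likewise, ``rescaling $N$'' to turn $\lesssim$ into $\le$ changes $W=NW_0$ and $S$ by constant factors. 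What the argument actually delivers is $\|\tilde f-f\|_2\le CN^{-\tilde\beta}$ for the stated architecture.

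\textbf{Step~2: the decay exponent of $n_k$ is inverted.} For $p<2$ put $\delta'=1/p-1/2$. In your normalization, level $k$ has about $2^k$ atoms and $\tilde\beta$ is the $2d$-dimensional quantity. Stechkin's bound then gives a level-$k$ error $\lesssim 2^{-k(\tilde\beta-\delta')}n_k^{-\delta'}$. So the $L^p\to L^2$ loss is $\delta'$, and the balancing parameter is $\nu=(\tilde\beta-\delta')/(2\delta')$. Take $N=2^{K^\ast}$ and $n_k=N2^{-\lambda(k-K^\ast)}$. The level errors are then $\lesssim N^{-\tilde\beta}2^{\delta'(\lambda-2\nu)(k-K^\ast)}$, and $n_k$ reaches $1$ at $k=K^\ast(1+\lambda^{-1})$.

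The choice $\lambda=\nu$ matches your range $K^\ast<k\le K^\ast(1+\nu^{-1})$ (any $\lambda<2\nu$ works). It gives a decaying geometric series, hence error $O(N^{-\tilde\beta})$ with $O(N)$ atoms. Your $\lambda=\nu^{-1}$ exceeds $2\nu$ whenever $\nu<1/\sqrt2$. Then, at the level where $n_k$ drops to $1$, the bound misses $N^{-\tilde\beta}$ by the factor $N^{\delta'(1-2\nu^2)}$.

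Finally, do not take $\delta$ and $\nu$ literally from the statement. Under $\tilde\beta/2>(1/p-1/2)_+$, the stated $\nu=(\tilde\beta/2-\delta)/(2\delta)$ is nonpositive whenever $\tilde\beta\le 4(1/p-1/2)$, so your range would be meaningless. Since $\delta$ and $\nu$ do not enter the conclusion, define them intrinsically as above.
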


\begin{proof}[Proof of Theorem~\ref{theorem:convergence rates}]
    In this proof, $Const.$ denotes a positive constant 
    whose value may change from line to line.
    By Theorem 6 in \cite{Bartlett_etal_2019}, 
    the VC dimension of the class of DNNs $\Fcal(L, W, S)$ is bounded as
    \begin{equation*}
        \mathrm{VCdim} \le Const.\, LS\log(S),
    \end{equation*}
    where $Const.$ is independent of $L$ and $S$.
    Substituting this bound into the result of Corollary~\ref{corollary:oracle-without-fixed-point}, 
    we obtain
    \begin{equation}
        \Eb[\| \hat{K}_n - K^\circ \|_2^2] 
        \le 
        Const.\left\{ \inf_{K \in \Kcal(L, W, S, B)} \Eb[\| K - K^\circ \|_2^2] + \frac{1}{n} + \frac{LS\log(S) \log(n \lfloor m/2 \rfloor)}{n \lfloor m/2 \rfloor} \right\}, \label{eq: oracle for NN}
    \end{equation}
    where $Const.$ is independent of $n$ and $m$.
    
    By Lemma~\ref{lem: approximation error for anisotropic Besov}, 
    taking $N = (n\lfloor m/2 \rfloor)^{1 / (\tilde{\beta} + 1)}$ 
    and the corresponding $L$, $W$, and $S$, 
    we have
    \begin{align*}
        \inf_{K\in \Kcal(L, W, S)}\| K - K^\circ \|_2^2 
        &= \inf_{h\in \Fcal(L, W, S)}\left\| \frac{1}{2} \left\{ h(\cdot, \star) + h(\star, \cdot)\right\}- K^\circ \right\|_2^2 \\
        &\le \frac{1}{2} \inf_{h\in \Fcal(L, W, S)}\| h(\cdot, \star) - K^\circ \|_2^2 + \frac{1}{2} \inf_{h\in \Fcal(L, W, S)}\| h(\star, \cdot) - K^\circ \|_2^2\\
        &=  \inf_{h\in \Fcal(L, W, S)}\| h(\cdot, \star) - K^\circ \|_2^2\\
        &\le (n \lfloor m / 2 \rfloor)^{-\tilde{\beta} / (\tilde{\beta} + 1)}.
    \end{align*}
    By substituting this bound and the values of $L$ and $S$ corresponding to $N = (n\lfloor m/2 \rfloor)^{1 / (\tilde{\beta} + 1)}$ into \eqref{eq: oracle for NN}, 
    we obtain the desired bound.
\end{proof}

\section{Proof of Theorem~\ref{theorem: convergence rates for RKHS}}
\begin{lemma}[Lemma 1.2 in \cite{chenLiu2022}] \label{lem:approximation error for cos}
    Let $D$ be any bounded subset of $\Rb^d$ and $C_D := \sup_{x \in [-1, 1]^d} \sup_{y \in D} |  x^\top y |$.
    For $\epsilon \in (0,1/2)$, 
    There exists a deep ReLU network $\tilde{X}_\epsilon$ with $O(\log(C_DM)\log^2(1/\epsilon))$ layers 
    and $O(\log(C_DM) \log^3(1/\epsilon)))$ nodes such that
    \begin{equation*}
        | \tilde{X}_\epsilon - \cos(2\pi w^\top x) | \le \epsilon, \quad\text{for } w \in [-M, M]^d,\quad x \in D.
    \end{equation*}
\end{lemma}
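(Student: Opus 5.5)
The plan is to realize $x\mapsto\cos(2\pi w^\top x)$ as a composition of three ReLU blocks:
\[
x\;\longmapsto\; z:=w^\top x\;\longmapsto\; u:=\tau(z)\in[0,1]\;\longmapsto\;\cos(2\pi u),
\]
where $\tau$ is a piecewise-linear \emph{folding map} such that $\cos(2\pi z)=\cos(2\pi\tau(z))$. We then add up the errors of the three blocks.

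\textbf{First block (inner product).} If $w$ is a fixed parameter, $z=w^\top x$ is produced exactly by the first linear layer. By definition of $C_D$ (after rescaling $w/M\in[-1,1]^d$), $|z|\le C_D M=:A$.

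If instead $w$ is an input, each product $w_i x_i$ is approximated by Yarotsky's multiplication network. It has depth and size $O(\log(A/\epsilon'))$ and accuracy $\epsilon'=\epsilon/(4\pi d)$. Since $\cos(2\pi\cdot)$ is $2\pi$-Lipschitz, this perturbation costs at most $\epsilon/2$ in the final output.

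\textbf{Second block (folding).} Let $g(t)=2\min\{t,1-t\}$ on $[0,1]$ be the hat function; it is a two-neuron ReLU network. Its $k$-fold composition $g^{\circ k}$ is the sawtooth with $2^{k-1}$ teeth (Telgarsky).

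Set $k\asymp\log_2(A)+2$ and shift and scale $z$ into $[0,1]$. A suitable affine rescaling composed with $g^{\circ k}$ then gives a map $\tau$ such that $\tau(z)$ equals the distance from $z$ to the nearest integer, up to affine normalization. Because cosine is $1$-periodic and even, this yields $\cos(2\pi z)=\cos(2\pi\tau(z))$ exactly.

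The block has depth $O(\log(C_DM))$ and $O(\log(C_DM))$ nodes, and it introduces \emph{no} error, since ReLU networks represent piecewise-linear maps exactly.

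\textbf{Third block (cosine on $[0,1]$).} Approximate $\cos(2\pi u)$ on $[0,1/2]$ to accuracy $\epsilon/2$.

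Because cosine is entire, the Taylor polynomial of degree $p\asymp\log(1/\epsilon)$ about $u=1/4$ has remainder at most $\pi^{p}/p!\le\epsilon/4$. Each monomial $u^j$ ($j\le p$) is computed by iterated use of Yarotsky's squaring network, at accuracy $\epsilon/(4p\max_j|a_j|)$. The coefficients $a_j$ are bounded by $(2\pi)^j/j!$, so only $O(\log(1/\epsilon))$ precision is needed per multiplication.

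Each multiplication costs depth $O(\log(1/\epsilon))$, and there are $O(\log(1/\epsilon))$ sequential multiplications. This gives depth $O(\log^2(1/\epsilon))$ and $O(\log^3(1/\epsilon))$ nodes. Alternatively, a Horner scheme keeps the depth at $O(\log^2)$.

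\textbf{Assembly.} Stacking the three blocks gives depth $O(\log(C_DM)+\log^2(1/\epsilon))$ and node count $O(\log(C_DM)+\log^3(1/\epsilon))$. These are dominated by the stated bounds $O(\log(C_DM)\log^2(1/\epsilon))$ and $O(\log(C_DM)\log^3(1/\epsilon))$ whenever $C_DM\ge e$; otherwise take $\log(C_DM)\vee 1$. The total error is at most $\epsilon/2+\epsilon/2=\epsilon$, uniformly over $w\in[-M,M]^d$ and $x\in D$.

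\textbf{Main obstacle.} The delicate step is the error propagation in the first block when $w$ is an input. The multiplication networks must be accurate on the whole range $[-A,A]$, so the required accuracy picks up a factor $A$ inside the logarithm, giving a $\log(C_DM)$ dependence. Moreover, any error in $z$ gets folded, so I must check that the folding block is $2^{k}\cdot(1/A)$-Lipschitz, i.e.\ $O(1)$ after rescaling. That way a perturbation of size $\epsilon'$ in $z$ changes the output by only $O(\epsilon')$. I would verify this Lipschitz bound first; if it fails, I would instead take $\epsilon'=\epsilon/\mathrm{poly}(A)$, which keeps the complexity logarithmic in $C_DM$.
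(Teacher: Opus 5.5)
The paper gives no proof of this lemma; it imports it from \cite{chenLiu2022}. Your proposal is therefore a self-contained replacement rather than a variant of an in-paper argument, and it is correct.

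Your fold-then-approximate construction has two stages: an exact sawtooth reduction of $w^\top x$ to $\mathrm{dist}(w^\top x,\mathbb{Z})\in[0,1/2]$, followed by a Taylor/Yarotsky block. It actually yields the additive bounds $O(\log(C_DM)+\log^2(1/\epsilon))$ layers and $O(\log(C_DM)+\log^3(1/\epsilon))$ nodes. These imply the stated product bounds once $\log(C_DM)$ is read as $\log(C_DM)\vee 1$. The statement needs that reading anyway, since for $C_DM\le 1$ it literally asks for a nonpositive number of layers. In the paper's only use ($C_D=2$, $M=j\vee k$, fixed $w=\xi$) the reading is automatic, and your first block is exact.

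Three details deserve to be written out.
\begin{itemize}
\item The affine map must place the teeth on the integers. Take $k=\lceil\log_2 A\rceil+2$, $s=(z+2^{k-2})/2^{k-1}$ and $\tau(z)=\tfrac12 g^{\circ k}(s)$. The identity $g^{\circ k}(s)=2\,\mathrm{dist}(2^{k-1}s,\mathbb{Z})$ on $[0,1]$ then gives $\tau=\mathrm{dist}(\cdot,\mathbb{Z})$ exactly on $|z|\le 2^{k-2}$. An arbitrary rescaling of $[-A,A]$ onto $[0,1]$ would not give this.
\item This $\tau$ is $1$-Lipschitz, so the ``main obstacle'' you flag is not one. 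Take $2^{k-2}\ge A+1$ so that the perturbed inner product stays in the exact range; then $\epsilon'=\epsilon/(4\pi d)$ suffices, with no $\mathrm{poly}(A)$ loss.
\item The powers should be of $u-1/4$. Because $|u-1/4|\le 1/4$, the iterated-product errors contract.
\end{itemize}

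What the citation buys is brevity. What your route buys is transparency and a sharper count. With Horner's scheme the third block has bounded width. In the proof of Theorem~\ref{theorem: convergence rates for RKHS} (with $\epsilon\asymp M^{-\alpha}$), each $\widetilde{\Psi}_{jk}$ would then have depth and parameter count $O(\log^2 M)$, instead of $\log^3 M$ and $\log^{11} M$. By the same bookkeeping, this brings the $\log^{17}$ factor down to roughly $\log^{7}$, consistent with the footnote's remark that the exponent is an artifact.
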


\begin{lemma} \label{lem: approximation error for products of 1d Fourier basis}
Let $\{\psi_j\}_{j\geq 1}$ be the Fourier basis of $L^2([0,1])$ given by
\[
\psi_1(x) := 1,
\qquad
\psi_{2j}(t) := \sqrt{2}\cos(2\pi j t),
\qquad
\psi_{2j+1}(t) := \sqrt{2}\sin(2\pi j t),
\quad j\in\mathbb{N}.
\]
For any $j,k\in\mathbb{N}$, let $\Lambda_{j,k} = \log\left(2(j\vee k)\right).$
There exists $c_{L,W}>0$, independent of $j$ and $k$, such that the following holds.
For any $\epsilon$ and integers $L$ and $W$ satisfying $L\le  c_{L,W}\Lambda_{j,k}\log^2(1/\epsilon)$ and width $W\le c_{L,W}\Lambda_{j,k}\log^3(1/\epsilon)$, 
one can construct a deep ReLU network $\widetilde \Psi_{jk}:[0,1]^2\to\mathbb{R}$ 
with depth $L$ and width $W$ satisfying
\begin{align*}
\left\lVert
\psi_j(s)\psi_k(t)-\widetilde \Psi_{jk}(s,t)
\right\rVert_{\infty}
\leq \epsilon.
\end{align*}
\end{lemma}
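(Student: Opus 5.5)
The construction will approximate each one-dimensional Fourier factor $\psi_j(s)$ and $\psi_k(t)$ separately by a ReLU network, and then combine the two approximations with a ReLU approximation of multiplication.

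\textbf{Step 1 (factors).} I will apply Lemma~\ref{lem:approximation error for cos} with $d=1$, $D=[0,1]$ and $M=j\vee k$, so that $C_D\le 1$ and $\log(C_D M)\lesssim \Lambda_{j,k}$. It gives networks $\tilde c_j$ with $|\tilde c_j(s)-\cos(2\pi j s)|\le\epsilon'$ on $[0,1]$. These networks have $O(\Lambda_{j,k}\log^2(1/\epsilon'))$ layers and $O(\Lambda_{j,k}\log^3(1/\epsilon'))$ nodes.

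For the sine factor I will use $\sin(2\pi j t)=\cos(2\pi j t-\pi/2)=\cos(2\pi j (t-1/(4j)))$. This is realized by an affine preprocessing of the input $t\mapsto t-1/(4j)$, which maps into the bounded set $D'=[-1,1]$; the same lemma applies with $C_{D'}\le 1$. The constant factor $\psi_1\equiv 1$ is trivial: a single bias neuron, padded by identity layers $x=\sigma(x)-\sigma(-x)$. Multiplying by $\sqrt2$ in the last layer gives $\tilde\psi_j$ with $\|\tilde\psi_j-\psi_j\|_\infty\le\sqrt2\epsilon'$.

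The two one-dimensional networks act on the coordinates $s$ and $t$ in parallel. I will pad the shallower one with identity layers so that both have equal depth; width and depth then add by at most constant factors.

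\textbf{Step 2 (product).} I will use the standard ReLU multiplication network of Yarotsky, or Schmidt-Hieber's Lemma A.2. On $[-2,2]^2$ it achieves accuracy $\eta$ with depth $O(\log(1/\eta))$ and constant width. The inputs $\tilde\psi_j,\tilde\psi_k$ lie in $[-2,2]$ once $\epsilon'$ is small. To guarantee this I will clip them with $x\mapsto \max(-2,\min(2,x))$, which is two ReLU layers and only reduces the error because $|\psi_j|\le\sqrt2$.

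The total error then splits as
\[
|\psi_j\psi_k-\widetilde\Psi_{jk}|\le |\psi_j||\psi_k-\tilde\psi_k|+|\tilde\psi_k||\psi_j-\tilde\psi_j|+\eta\le 8\epsilon'+\eta .
\]
Choosing $\epsilon'=\epsilon/16$ and $\eta=\epsilon/2$ gives error at most $\epsilon$. Since $\log(1/\epsilon')\asymp\log(1/\epsilon)$ for $\epsilon<1/2$, the total depth is $O(\Lambda_{j,k}\log^2(1/\epsilon))+O(\log(1/\epsilon))$ and the width is $O(\Lambda_{j,k}\log^3(1/\epsilon))$. I then fix $c_{L,W}$ as the maximum of the implied constants, which do not depend on $j$ or $k$.

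\textbf{Main obstacle.} The delicate point is bookkeeping rather than analysis. First, the phrase "nodes" in Lemma~\ref{lem:approximation error for cos} must be converted to a width bound; the total node count trivially bounds the width. Second, the statement reads "$L\le$, $W\le$", which should be interpreted as the existence of a network of at most that size. Any network that fits within the required depth and width can be extended to exactly that depth and width by identity layers and zero neurons, so this interpretation costs nothing. Third, $\Lambda_{j,k}\ge\log 2>0$ ensures that constant overheads (clipping, multiplication, the bias neuron) can be absorbed into $c_{L,W}\Lambda_{j,k}$.
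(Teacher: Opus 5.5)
Your proof is correct, but it takes a different route from the paper's. The paper never multiplies two networks. In the nonconstant case it uses product-to-sum identities to write $\psi_j(s)\psi_k(t)$ as a combination of at most two ridge functions $\cos(2\pi\xi^\top u)$ or $\sin(2\pi\xi^\top u)$. Here $u=(s,t)$ and $\xi\in\{\pm l_j\}\times\{\pm l_k\}$ with $l_j=\lfloor j/2\rfloor$, so $\|\xi\|_\infty\le j\vee k$. It then invokes Lemma~\ref{lem:approximation error for cos} once, with $d=2$, $D=[0,1]^2$ and $M=j\vee k$. That argument is shorter and exploits the trigonometric structure directly: it needs no multiplication gadget, no clipping, and no propagation of error through a product.

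Your construction instead approximates each one-dimensional factor with the $d=1$ case of the same lemma. You then combine the two approximants with a Yarotsky/Schmidt-Hieber multiplication network. The costs are all correctly absorbed, since $\Lambda_{j,k}\ge\log 2$ and $\log(1/\epsilon)\ge\log 2$:
\begin{itemize}
\item the extra $O(\log(1/\epsilon))$ depth;
\item the constant-width product and clipping layers;
\item the product-error bookkeeping.
\end{itemize}

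What your route buys is modularity. It relies on no trigonometric identity, so it applies verbatim to tensor products of any one-dimensional bases with efficient ReLU approximants. It also iterates to higher-order tensor products.

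You also make explicit two points the paper leaves implicit. First, the sine phase shift is realized by an affine input shift, so the lemma applies literally. Second, the hypothesis ``$L\le\cdots$, $W\le\cdots$'' must be read existentially.

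One small slip: you write $\cos(2\pi j s)$ as if the index $j$ were the frequency. In fact $\psi_j$ has frequency $\lfloor j/2\rfloor$. Since this is at most $j\vee k=M$, your application of the lemma is unaffected.
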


\begin{proof}
We first note that, by definition of the Fourier basis, each $\psi_j$ is either the constant function $1$, 
or a constant multiple of $\cos(2\pi l t)$, or a constant multiple of $\sin(2\pi l t)$ for some integer $l \geq 1$.
Moerover, by construction, $\psi_1$ has frequency $0$, and both $\psi_{2j}$ and $\psi_{2j+1}$ have frequency $j$. 
Thus, the frequency associated with $\psi_j$ is given by $l_j := \left\lfloor j/2\right\rfloor.$
Hence, for any $j,k\geq 1$,
\[
l_j \vee l_k 
= \left\lfloor \frac{j}{2} \right\rfloor \vee \left\lfloor \frac{k}{2} \right\rfloor
\le \frac{j}{2} \vee \frac{k}{2}
= \frac{1}{2}(j \vee k)
\le j \vee k.
\]

If either $\psi_j$ or $\psi_k$ is constant, 
then $\psi_j(s)\psi_k(t)$ reduces to a one-dimensional sine or cosine function 
with frequency at most $j\vee k$, and the result follows from Lemma~\ref{lem:approximation error for cos}.
We therefore consider the nonconstant case.
In this case, up to a multiplicative constant, 
$\psi_j(s)\psi_k(t)$ is a product of sine and/or cosine functions with frequencies $l_j$ and $l_k$. 
By standard trigonometric identities, 
it can be written as a linear combination of at most two functions of the form
\[
\sin\left(2\pi \left(\pm l_j s \pm l_k t\right)\right)
\quad\text{or}\quad
\cos\left(2\pi \left(\pm l_j s \pm l_k t\right)\right).
\]

Let $u := (s,t)\in[0,1]^2.$
Each function above is of the form
\[
\sin\left(2\pi \xi^\top u\right)
\quad\text{or}\quad
\cos\left(2\pi \xi^\top u\right),
\]
where $\xi \in \{\pm l_j\}\times \{\pm l_k\}.$
Since $l_j \vee l_k \leq j\vee k$, we have $\|\xi\|_\infty \leq j\vee k.$

We first consider $\cos(2\pi \xi^\top u)$.
For a deep ReLU network $F$, let $L(F)$ and $W(F)$ denote its depth and width, respectively.
Applying Lemma~\ref{lem:approximation error for cos} with $d=2$, $D=[0,1]^2$, and $M=j\vee k$, 
it follows that, for any $\epsilon\in(0,1/2)$, there exists a deep ReLU network $F_\epsilon^{(\cos)}$ such that
\[
\|F_\epsilon^{(\cos)}(\cdot) - \cos(2\pi \xi^\top \cdot)\|_{\infty}
\le \varepsilon,
\]
and
\[
L\left(F_\epsilon^{(\cos)}\right)
\le c_L \Lambda_{j,k}\log^2(1/\epsilon),\quad
W\left(F_\epsilon^{(\cos)}\right)
\le c_W \Lambda_{j,k}\log^3(1/\epsilon),
\]
where $c_L,c_W>0$ are constants independent of $\epsilon$, $j$, and $k$.
The same conclusion holds for $\sin(2\pi \xi^\top u)$, since
\[
\sin(2\pi \xi^\top u)
=
\cos\!\left(2\pi \xi^\top u - \frac{\pi}{2}\right).
\]
Thus, the statement of this lemma is proved with $c_{L, W} := c_L \vee c_W$.
\end{proof}

\begin{proof}[Proof of Theorem~\ref{theorem: convergence rates for RKHS}]
    In this proof, $\mathrm{Const.}$ denotes a positive constant whose value may change from line to line.
    For $M\in\mathbb{N}$, define
    $
    I_M
    :=
    \left\{
    (j,k)\in\mathbb{N}^2: jk \leq M
    \right\},
    $
    and define the truncation of $K^\circ$ at level $M$ by
    \ba
        K_M^\circ(s,t) := \sum_{(j,k)\in I_M} c_{jk}\psi_j(s)\psi_k(t).
    \ea
    Since $\{\psi_j\psi_k\}$ are orthonormal basis of $L^2(\Tcal ^2)$, we have
    \ba
        \| K_M^\circ - K^\circ \|^2_2 = \sum_{(j,k)\not\in I_M} c_{jk}^2.
    \ea
    By the definition of $\Kcal_{\rm TP}^\alpha$ and $\rho_j = (2\pi \lfloor j/2 \rfloor)^{-2\alpha}$ for $j \ge 2$,
    \begin{equation*}
        \sum_{(j,k)\notin\Lambda_M} c_{jk}^2
        \le (\sup_{jk > M} \rho_j\rho_k) \sum_{j, k \ge 1} \frac{c_{jk}^2}{\rho_j\rho_k} 
        \le R^2\sup_{jk > M} \rho_j\rho_k
        \le Const.\times \frac{R^2}{M^{2\alpha}},
    \end{equation*}
    where $Const.$ is independent of $M$.
    Thus, we obtain
    \begin{equation}
        \| K_M^\circ - K^\circ  \|^2_2 \lesssim \frac{R^2}{M^{2\alpha}}.
        \label{eq:truncation}
    \end{equation}

    By Lemma~\ref{lem: approximation error for products of 1d Fourier basis},
    for any $\epsilon \in (0,1/2)$ and each $(j,k)\in I_M$,
    there exists a deep ReLU network $\widetilde{\Psi}_{jk}$ satisfying
    \ba
    &L(\widetilde{\Psi}_{jk}) \lesssim \log(2M)\log^2(1/\epsilon),\quad
    W(\widetilde{\Psi}_{jk}) \lesssim \log(2M)\log^3(1/\epsilon),
    \ea
    and
    \ba
    \left\|
    \psi_j\psi_k-\widetilde{\Psi}_{jk}
    \right\|_{\infty}
    \leq \epsilon.
    \ea
    Now, we define an approximator of $K_M^\circ$ as
    \[
    \widetilde{K}_{M,\epsilon}(s,t)
    :=
    \sum_{(j,k)\in I_M}
    c_{jk}\,\widetilde{\Psi}_{jk}(s,t).
    \]
    By parallelization and a final affine output layer, $\widetilde{K}_{M,\epsilon}$ can be represented by a deep ReLU network.
    Here, we note that
    \[
    \sum_{(j,k)\in I_M}|c_{jk}|
    \le
    \left( \sum_{(j,k)\in I_M}\frac{c_{jk}^2}{\rho_j\rho_k}\right)^{1/2}
    \left( \sum_{(j,k)\in I_M}\rho_j\rho_k\right)^{1/2}
    \le Const.\times R,
    \]
    where $Const.$ is a global constant.
    Thus, we obtain
    \begin{align}
    \left\lVert
    K_M^\circ-\widetilde{K}_{M,\epsilon}
    \right\rVert_{2}
    &=
    \left\|
    \sum_{(j,k)\in I_M}
    c_{jk}\left(\psi_j\psi_k-\widetilde{\Psi}_{jk}\right)
    \right\|_{2}
    \leq
    \sum_{(j,k)\in I_M}
    |c_{jk}|
    \left\|
    \psi_j\psi_k-\widetilde{\Psi}_{jk}
    \right\|_{2}
    \nonumber\\
    &\leq
    Const.\times R \max_{(j,k)\in I_M}\left\|\psi_j\psi_k-\widetilde{\Psi}_{jk}\right\|_{\infty} \le Const. \times R\epsilon.
    \label{eq:dnn_hc_bound_reindexed}
    \end{align}
    Combining (\ref{eq:truncation}) and (\ref{eq:dnn_hc_bound_reindexed}), 
    we obtain
    \[
    \|K^\circ - \widetilde{K}_{M,\epsilon}\|_2^2 \le Const. \times R^2\left( M^{-2\alpha} + \epsilon^2\right).
    \]
    Thus, we take $M^{-\alpha}\asymp \epsilon$.
    
    Now, we bound the number of nonzero parameters of $\widetilde K_{M,\epsilon}$.
    By construction, for each $(j,k)$, the network $\widetilde{\Psi}_{jk}$ satisfies
    \ba
    L(\widetilde{\Psi}_{jk}) &\lesssim \log(2M)\log^2(1/\epsilon) \lesssim  \log^3(M),\\
    W(\widetilde{\Psi}_{jk}) &\lesssim \log(2M)\log^3(1/\epsilon) \lesssim  \log^4(M).
    \ea
    Hence, the number of nonzero parameters satisfies
    \[
    S(\widetilde{\Psi}_{jk})
    \lesssim
    L(\widetilde{\Psi}_{jk})\,W(\widetilde{\Psi}_{jk})^2
    \lesssim
    \log^{11}(M).
    \]
    Since the network $\widetilde K_{M,\epsilon}$ is obtained by combining $|I_M|$ such networks in parallel,
    \[
    S(\widetilde K_{M,\epsilon}) \lesssim |I_M|\,\log^{11}(M) \lesssim M\log^{12}(M).
    \]
    Here, we used
    \[
        |I_M| = \#\{(j,k)\in \Nb^2\mid jk\le M \} = \sum_{j=1}^M\lfloor M/j\rfloor \lesssim M\log(M).
    \]
    
    From Theorem~6 in \cite{Bartlett_etal_2019}, the VC dimension of the network class $\Fcal(L,W,S)$ is bounded by
    \[
    \mathrm{VCdim}(\Fcal(L,W,S)) \lesssim LS\log(S).
    \]
    Thus, we consider the network model $\Fcal(L,W,S)$ with $L\asymp \log^3(M)$, $W\asymp M\log^5(M)$, and $S\asymp M\log^{12}(M)$.
    With this choice of $L$, $W$, and $S$, and taking $M \asymp (n \lfloor m/2 \rfloor)^{1/(2\alpha + 1)}$,
    Corollary~\ref{corollary:oracle-without-fixed-point} yields
    \begin{align*}
        \Eb[\| \hat{K}_n - K^\circ \|_2^2] 
        &\le Const.\times  \left\{ M^{-2\alpha} + \frac{1}{n} + \frac{LS\log(S) \log(n \lfloor m/2 \rfloor)}{n \lfloor m/2 \rfloor} \right\} \\
        &\le Const.\times \left\{ \frac{1}{n} + \frac{1}{(n \lfloor m/2 \rfloor)^{2\alpha / (2\alpha + 1)}} \log^{17}(n \lfloor m/2 \rfloor) \right\},
    \end{align*}
    where $Const.$ is independent of $n$ and $m$.
\end{proof}

\section{Technical Lemmas for Theorem~\ref{theorem:oracle}}\label{app:tech-lemmas}

Here, we state several technical lemmas used in the proof of Theorem~\ref{theorem:oracle}.
The following lemmas are obtained via a peeling device \citep{vanDerGeer2000} and a conditioning argument, 
combined with Lemma~\ref{lemma:modified-Talagrand}.
Proofs of these technical lemmas are given in Appendix~\ref{app:proofs-tech-lemmas}.

\begin{lemma}\label{lemma:CI1}
Let $C>1/(40\sqrt{2}B_K)$.
Then, for any $x\ge 0$, with probability at least $1-e^{-x}$, the following inequality holds:
\begin{align*}
&\Biggl|
\|\widehat{K}_n-K^\circ\|_{2}^2
-
\frac{1}{nm(m-1)}
\sum_{i=1}^n\sum_{j\neq k}
\left\{
\widehat{K}_n(T_{ij},T_{ik})-K^\circ(T_{ij},T_{ik})
\right\}^2
\Biggr|\\
&\le
\frac{1}{C}\,
\|\widehat{K}_n-K^\circ\|_{2}^2 + 3200\,C\,B_K^2\,r^\ast + \frac{2B_K^2 x}{n\lfloor m/2 \rfloor}\,(C+22).
\end{align*}
\end{lemma}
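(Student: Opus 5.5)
The plan is a standard localization argument (peeling plus the modified Talagrand inequality), applied to the class of squared deviations. Set $g_K := (K-K^\circ)^2$ and write $\|g_K\|_\infty \le 4B_K^2 =: U$. For a fixed $K$, the empirical average $\frac{1}{nm(m-1)}\sum_i\sum_{j\ne k} g_K(T_{ij},T_{ik})$ is an unbiased estimator of $P g_K = \|K-K^\circ\|_2^2$. The variance proxy satisfies $\mathbb{E}[g_K^2] \le 4B_K^2 \|K-K^\circ\|_2^2$, which gives the Bernstein-type (variance $\lesssim$ mean) condition needed for localization. Because $\widehat K_n$ is random, the bound must be uniform over $\Kcal_{nm}$. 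To get this, I will normalize and control
\[
Z(r) := \sup_{K\in\Kcal_{nm}} \left|\frac{1}{nm(m-1)}\sum_{i=1}^n\sum_{j\ne k}\frac{P g_K - g_K(T_{ij},T_{ik})}{\|K-K^\circ\|_2^2 + r}\right|
\]
for a suitable $r \ge r^\ast$ (after reducing to a countable dense subclass, so that Lemma~\ref{lemma:modified-Talagrand} applies).

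The first step is to apply Lemma~\ref{lemma:modified-Talagrand} with $f_i := (Pg_K - g_K)/(\|K-K^\circ\|_2^2+r)$. This choice gives $U \lesssim B_K^2/r$ and $\sigma^2 \lesssim B_K^2/r$, since $\mathbb{E} g_K^2/(\|K-K^\circ\|_2^2+r)^2 \le 4B_K^2\|K-K^\circ\|_2^2/(\|K-K^\circ\|_2^2+r)^2 \le B_K^2/r$. The resulting deviation terms are of order $\sqrt{B_K^2 x/(n\lfloor m/2\rfloor r)} + B_K^2x/(n\lfloor m/2\rfloor r)$. The second step bounds the expectation term $\mathbb{E}[\tilde S_n]/(n\lfloor m/2\rfloor)$, which involves only the independent pairs $(T_{ij},T_{i(\lfloor m/2\rfloor+j)})$. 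On these pairs, ordinary symmetrization applies, followed by the contraction principle: the map $u\mapsto u^2$ is $4B_K$-Lipschitz on $[-2B_K,2B_K]$, so the squared class reduces to the Rademacher average in the definition of $\phi_{nm}$. A peeling over the shells $\{K: 2^{l-1}r < \|K-K^\circ\|_2^2 \le 2^l r\}$, together with the sub-root property $\phi_{nm}(2^l r) \le 2^{l/2}\sqrt{r/r^\ast}\,\phi_{nm}(r^\ast) \le 2^{l/2}\sqrt{r r^\ast}$, then gives $\mathbb{E}\,\tilde S_n/(n\lfloor m/2\rfloor) \lesssim B_K\sqrt{r^\ast/r}$. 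This is where constants like $40\sqrt2$ and $3200 = 2\cdot 40^2$ should emerge.

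Third, I will choose $r$ so that the whole right-hand side is at most $1/C$. Concretely, I take $r$ of order $C^2 B_K^2 r^\ast + C B_K^2 x/(n\lfloor m/2\rfloor)$. The condition $C > 1/(40\sqrt2 B_K)$ ensures this choice is compatible with $r\ge r^\ast$, as required by the sub-root property. With this $r$, on the event of probability at least $1-e^{-x}$ we have $Z(r) \le 1/C$. Multiplying out then gives, for $K=\widehat K_n$,
\[
\Bigl|\|\widehat K_n-K^\circ\|_2^2 - \text{emp}\Bigr| \le \frac{1}{C}\|\widehat K_n-K^\circ\|_2^2 + \frac{r}{C},
\]
and $r/C$ has exactly the form $3200CB_K^2r^\ast + 2B_K^2x(C+22)/(n\lfloor m/2\rfloor)$. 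I will use Young's inequality to absorb the square-root deviation term into the linear terms, which accounts for the additive $22$.

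I expect the main obstacle to be the second step, handling the expectation with the correct scaling. Lemma~\ref{lemma:modified-Talagrand} already replaces the $U$-statistic by the decoupled sum $\tilde S_n$, but two points need care. First, the peeling must be carried out inside that expectation with weights $1/(2^{l-1}r + r)$. Second, the ratio-type (weighted) class must still admit symmetrization on half-samples; this is legitimate because the normalization depends only on $K$ and not on the data. Tracking the constants so that they exactly match $3200$ and $22$ is bookkeeping. The conceptual point is that the effective sample size is $n\lfloor m/2\rfloor$ rather than $n$, and that comes entirely from Lemma~\ref{lemma:modified-Talagrand}.
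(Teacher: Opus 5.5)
Your proposal follows the paper's proof essentially step for step. The shared steps are symmetrization on the independent decoupled pairs $(T_{ij},T_{i(\lfloor m/2\rfloor+j)})$, contraction with Lipschitz constant $4B_K$, and peeling plus the sub-root property to get $32B_K\sqrt{r^\ast/r}$. Then both apply Lemma~\ref{lemma:modified-Talagrand} with $U=4B_K^2/r$ and $\sigma^2=B_K^2/r$, choose $r\ge C^2A_1^2+2CA_2$, and multiply through.

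The only difference is in the constants. The paper peels with ratio $s=3+2\sqrt{2}$, giving the factor $(5+2\sqrt{2})/2\le 4$, and takes $\alpha=1/4$ in the Talagrand bound. That is where $3200=2(32\cdot 5/4)^2$ and the $22=2\cdot 4\cdot(3/2+1/\alpha)/2$ come from. Peeling with ratio $2$ works equally well but gives somewhat larger numerical constants than those stated.
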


\begin{lemma}\label{lemma:CI2}
Let $\tilde{e}_{i}(T_{ij},T_{ik}) := \mathcal{C}_{\beta_U}e_{i}(T_{ij},T_{ik})$.
For any $C_U > 1/(20\sqrt{2}\beta_U)$, the following inequality holds:
\ba
&\Eb\left[\left|\frac{1}{nm(m-1)}\sum_{i=1}^n\sum_{j\neq k}
\tilde{e}_{i}(T_{ij},T_{ik})\left\{\Khat_n(T_{ij},T_{ik}) - K^\circ(T_{ij},T_{ik})\right\} -\la \tilde{e}_i, \Khat_n-\Ktrue \ra\right|\right]\\
&\le
\frac{1}{C_U}\Eb\left[\|\what{K}_n-K^\circ\|_{2}^2\right] + 800C_U^2\beta_U^2 r^\ast  + \frac{\beta_U}{n\lfloor m/2 \rfloor}\left( C_U\beta_U+ 44B_k\right),
\ea
where $\la \tilde{e}_i, K \ra := \int_{\Tcal\times \Tcal} \tilde{e}_i(s,t)K(s,t)\,dP_T^{\otimes 2}(s,t)$.
\end{lemma}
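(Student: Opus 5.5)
The plan is to bound the centered weighted $U$-process uniformly over $\Kcal_{nm}$. I will do this conditionally on the processes $X_1,\dots,X_n$, and then evaluate the resulting bound at $K=\Khat_n$.

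\textbf{Step 1 (conditioning and centering).} Conditionally on $X_1,\dots,X_n$, each clipped function $\tilde e_i$ is a fixed measurable function on $\Tcal^2$ with $|\tilde e_i|\le\beta_U$. The pairs $(T_{ij},T_{ik})$ are independent of the $X_i$. For $K\in\Kcal_{nm}$ I define
\[
g_{K,i}(s,t):=\tilde e_i(s,t)\{K(s,t)-\Ktrue(s,t)\}-\la\tilde e_i,K-\Ktrue\ra .
\]
Then $\Eb[g_{K,i}(T_{i1},T_{i2})\mid X_i]=0$ and $\|g_{K,i}\|_\infty\le 4\beta_U B_K$. The second moment satisfies $\Eb[g_{K,i}^2\mid X_i]\le\beta_U^2\|K-\Ktrue\|_2^2$. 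By Assumption~\ref{assumption:1} and separability of the network or parametric classes, I may pass to a countable dense subclass, so Lemma~\ref{lemma:modified-Talagrand} applies conditionally.

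\textbf{Step 2 (localization by peeling).} Fix $r\ge r^\ast$ and split $\Kcal_{nm}$ into shells
\[
\Kcal_0=\Kcal_{nm}(r),\qquad \Kcal_l=\Kcal_{nm}(2^lr)\setminus\Kcal_{nm}(2^{l-1}r),\quad l\ge 1 .
\]
On each shell I consider the normalized functions $g_{K,i}/(\|K-\Ktrue\|_2^2+r)$. These are bounded by $4\beta_UB_K/r$, and their variance proxy is $\sigma^2\lesssim\beta_U^2 2^l r/(2^lr)^2$.

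The expected decoupled supremum $\Eb\tilde S_n/(n\lfloor m/2\rfloor)$ is handled in three moves. First, I symmetrize over the $n\lfloor m/2\rfloor$ independent pairs. Second, I remove the fixed multipliers $\tilde e_i$ by the Ledoux--Talagrand contraction principle, applied conditionally on the $X_i$; the multipliers act as $\beta_U$-Lipschitz maps vanishing at $0$. This yields a bound of $2\beta_U$ times the Rademacher complexity of $\Kcal_{nm}(2^lr)$, hence at most $2\beta_U\phi_{nm}(2^lr)$. Third, the sub-root property gives $\phi_{nm}(2^lr)\le\sqrt{2^lr\,r^\ast}$.

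Summing the shells with a union bound, with deviation level $x+2\log(l+1)$ on shell $l$, Lemma~\ref{lemma:modified-Talagrand} yields the following. With conditional probability at least $1-e^{-x}$,
\[
\sup_{K}\frac{|\text{dev}(K)|}{\|K-\Ktrue\|_2^2+r}\lesssim \beta_U\sqrt{\frac{r^\ast}{r}}+\beta_U\sqrt{\frac{x}{r\,n\lfloor m/2\rfloor}}+\frac{\beta_U B_K x}{r\,n\lfloor m/2\rfloor}.
\]
Here $\text{dev}(K)$ denotes the centered quantity in the statement of Lemma~\ref{lemma:CI2}, evaluated at $K$. The bound does not depend on $X$, so it also holds unconditionally.

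\textbf{Step 3 (choice of $r$ and integration).} I choose $r\asymp C_U^2\beta_U^2r^\ast+(C_U\beta_U^2+\beta_UB_K)\,x/(n\lfloor m/2\rfloor)$, so that the right-hand side is at most $1/C_U$. This gives, with probability at least $1-e^{-x}$,
\[
|\text{dev}(\Khat_n)|\le \frac{1}{C_U}\|\Khat_n-\Ktrue\|_2^2+\frac{r}{C_U}.
\]
The condition $C_U>1/(20\sqrt2\beta_U)$ ensures that this $r$ dominates $r^\ast$. Integrating the tail in $x$, using $\int_0^\infty e^{-x}\,dx=1$, produces the three terms in the statement: $800C_U^2\beta_U^2r^\ast$ and $\beta_U(C_U\beta_U+44B_K)/(n\lfloor m/2\rfloor)$.

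\textbf{Main obstacle.} I expect the hardest step to be Step 2. The problem is that $\tilde e_i$ depends on the subject $i$, so the function class is vector-valued, $f=(f_1,\dots,f_n)$. The contraction must be applied with subject-specific multipliers, conditionally on the $X_i$, and must still land on the unweighted Rademacher fixed point $\phi_{nm}$. I would first try the comparison inequality for $\sum\sigma_{ij}a_{ij}h(T)$ with $|a_{ij}|\le\beta_U$, applied pair by pair. If that does not work, I would fall back on a conditional Gaussian comparison argument. Tracking the explicit constants ($800$, $44$) through the peeling is routine but tedious.
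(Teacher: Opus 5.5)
Your outline is sound and, after the fixes below, proves the lemma up to the values of the absolute constants. The localization, however, is organized differently from the paper's.

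The paper also conditions on $X_1,\dots,X_n$, but it normalizes by $\|K-\Ktrue\|_2^2+r$ over all of $\Kcal_{nm}$ at once. Since $(\|K-\Ktrue\|_2^2+r)^2\ge 4r\|K-\Ktrue\|_2^2$, the whole normalized class has envelope $4\beta_UB_K/r$ and variance proxy $\beta_U^2/(4r)$. The modified Talagrand inequality is therefore applied a single time, and peeling is used only to bound the expected supremum: symmetrization, contraction to $2\beta_U\phi_{nm}$, then shells of ratio $3+2\sqrt2$ plus the sub-root property, giving $16\beta_U\phi_{nm}(r)/r$. With $\alpha=1/4$, this is exactly where $2\cdot16^2(5/4)^2=800$ and $2\cdot\frac{11}{2}\cdot4=44$ come from.

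Your union bound over shells is van de Geer's peeling in probability. It is redundant once you have normalized, and it has a cost:
\begin{itemize}
\item You must use the shell-dependent envelope $\asymp\beta_UB_K/(2^lr)$ rather than the uniform $4\beta_UB_K/r$ you wrote. Otherwise the $2\log(l+1)$ penalties are not absorbed and a $\log\log$ factor survives.
\item You pick up a factor $\pi^2/6$ in the failure probability and extra additive constants.
\end{itemize}
So your route gives the inequality with some absolute constants. Your claim that integration ``produces'' $800$ and $44$ is not supported by it.

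There are two smaller corrections. First, in Step~3 the $x$-terms in your $r$ are each missing a factor $C_U$. Making $\beta_U\sqrt{x/(r\,n\lfloor m/2\rfloor)}$ and $\beta_UB_Kx/(r\,n\lfloor m/2\rfloor)$ at most $1/C_U$ requires $r\gtrsim (C_U^2\beta_U^2+C_U\beta_UB_K)\,x/(n\lfloor m/2\rfloor)$; the paper takes $r\ge C^2A_1^2+2CA_2$. Dividing this $r$ by $C_U$ gives $C_U\beta_U^2r^\ast+(C_U\beta_U^2+\beta_UB_K)\,x/(n\lfloor m/2\rfloor)$. That is the stated last term plus an $r^\ast$-term linear in $C_U$. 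This is also what the paper's own derivation yields, and it implies the displayed $800C_U^2\beta_U^2r^\ast$ only when $C_U\ge1$, which is the regime in which the lemma is used.

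Second, the step you flag as the main obstacle is not one. The Ledoux--Talagrand contraction principle allows a different contraction at each index. Conditionally on $X$ and the $T_{ij}$, the maps $u\mapsto\tilde e_i(T_{ij},T_{i(\lfloor m/2\rfloor+j)})\,u$ are each $\beta_U$-Lipschitz and vanish at $0$, so they give $2\beta_U\phi_{nm}(r)$ directly, as in the paper. No Gaussian comparison is needed.
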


\begin{lemma}\label{lemma:CI3}
Let $e_i^{(V)}(T_{ij},\epsilon_{ik})
:=\mathcal{C}_{\beta_V}(\epsilon_{ik}X_i(T_{ij})) -  \Eb[\mathcal{C}_{\beta_V}(\epsilon_{ik}X_i(T_{ij}))\mid X_i, T_{ij}]$.
For any $C_V > 1/(40\sqrt{2}\beta_V)$, the following inequality holds:
\ba
&\Eb\left[\left|\frac{1}{nm(m-1)}\sum_{i=1}^n\sum_{j\neq k}
e_i^{(V)}(T_{ij},\epsilon_{ik})\left\{K(T_{ij},T_{ik})-  K^\circ(T_{ij},T_{ik})\right\}
\right|\right]\\
&\le
\frac{1}{C_V}\Eb\left[\|\what{K}_n-K^\circ\|_{2}^2\right] + 3200C_V^2\beta_V^2 r^\ast  + \frac{2\beta_V}{n\lfloor m/2 \rfloor}\left( C_V\beta_V+ 26B_k\right).
\ea
\end{lemma}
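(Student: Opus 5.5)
The statement to prove is Lemma~\ref{lemma:CI3}, the bound on the cross term between noise and process. The plan is to reduce it to the same peeling-plus-modified-Talagrand argument that underlies Lemmas~\ref{lemma:CI1} and \ref{lemma:CI2}. We work conditionally on $(X_i, T_{ij})$, treating $\epsilon_{ik}$ as the remaining source of randomness.

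\textbf{Structure of the summand.} Set $g_{ijk}(K) := e_i^{(V)}(T_{ij},\epsilon_{ik})\{K(T_{ij},T_{ik})-K^\circ(T_{ij},T_{ik})\}$. Then $|e_i^{(V)}|\le 2\beta_V$ and $|K-K^\circ|\le 2B_K$. Conditionally on $X_i$ and $T_{ij}$, the term $e_i^{(V)}(T_{ij},\epsilon_{ik})$ has mean zero. Since $\epsilon_{ik}$ is independent of $(X_i,T_{ij},T_{ik})$, each $g_{ijk}(K)$ has mean zero for every fixed $K$. The pairs $(\xi_{ij},\xi_{ik})$ with $\xi_{ij}:=(T_{ij},\epsilon_{ij})$ are i.i.d. across $j$ given $X_i$. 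So the double sum is a within-subject $U$-statistic of exactly the type covered by Lemma~\ref{lemma:modified-Talagrand}, applied conditionally on $X_1,\dots,X_n$ with $f_i$ depending on $X_i$. The kernel need not be symmetric, but the permutation representation of \cite{ClemenconEtAl2008} does not require symmetry. We take $U=4\beta_VB_K$. For the variance proxy, $\Eb[g^2]\le 4\beta_V^2\|K-K^\circ\|_2^2$, since $T_{ij},T_{ik}$ are independent of $X_i$ and $\epsilon_{ik}$.

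\textbf{Peeling.} Fix the shells $\Kcal_{nm}(r)$ with $r_\ell = 2^\ell r$ for $r\ge r^\ast$ and normalize by $\|K-K^\circ\|_2^2 + r$. To bound the expected supremum term $\Eb[\tilde S_n]/(n\lfloor m/2\rfloor)$ on each shell, we use symmetrization on the decoupled pairs $(\xi_{ij},\xi_{i(\lfloor m/2\rfloor+j)})$. The contraction principle then removes the bounded multiplier $e_i^{(V)}$, which costs a factor $2\beta_V$ and leaves the Rademacher average in the definition of $\phi_{nm}$. By the sub-root property, $\phi_{nm}(r)\le\sqrt{r r^\ast}$ for $r\ge r^\ast$. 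A careful point is that contraction requires the multiplier to be a fixed function of the variables. Conditionally on $\epsilon$ and $X$ it is, so we symmetrize with $\sigma_{ij}$ multiplying $e\cdot(K-K^\circ)$ and apply the Ledoux--Talagrand comparison coordinatewise to $K\mapsto e\,(K-K^\circ)$.

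\textbf{Assembling the bound.} Lemma~\ref{lemma:modified-Talagrand}, applied with $x$ replaced by $x+\log(\ell+2)$ on each shell and combined with a union bound, gives with probability $1-e^{-x}$ a uniform bound of the form
\[
|\cdot| \le \frac{\|K-K^\circ\|_2^2 + r}{4}\cdot\Bigl\{c\beta_V\sqrt{r^\ast/r} + c\beta_V\sqrt{x/(n\lfloor m/2\rfloor r)} + c\beta_VB_Kx/(n\lfloor m/2\rfloor r)\Bigr\}.
\]
Choosing $r \asymp C_V^2\beta_V^2 r^\ast + C_V\beta_V^2 x/(n\lfloor m/2\rfloor) + \beta_VB_Kx/(n\lfloor m/2\rfloor)$ makes the bracket at most $1/C_V$. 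Evaluating at $K=\Khat_n$ and integrating the tail over $x$ then yields the stated form $\frac{1}{C_V}\Eb\|\Khat_n-K^\circ\|_2^2 + 3200C_V^2\beta_V^2r^\ast + \frac{2\beta_V}{n\lfloor m/2\rfloor}(C_V\beta_V+26B_K)$. The condition $C_V>1/(40\sqrt2\beta_V)$ is what makes the chosen $r$ exceed $r^\ast$, so that the sub-root bound applies.

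\textbf{Main obstacle.} The delicate step is the conditioning. Lemma~\ref{lemma:modified-Talagrand} needs i.i.d. pair variables and mean-zero $f_i$. Here $f_i$ depends on the whole path $X_i$, and the function class must be countable; this requires a separability argument for $\Kcal_{nm}$. We also need the variance proxy and the localized Rademacher complexity, both computed under $P_T^{\otimes2}$, to remain valid after conditioning on $X_i$. Independence of $T$ from $X$ is what makes this work, and we check it before running the peeling.
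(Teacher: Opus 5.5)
Your plan follows the paper's proof of Lemma~\ref{lemma:CI3}. You condition on $X_1,\dots,X_n$, work with the pairs $Z_{ij}=(T_{ij},\epsilon_{ij})$, normalize by $\|K-K^\circ\|_2^2+r$, symmetrize using the conditional mean-zero property of $e_i^{(V)}$, remove the multiplier by contraction, peel, use the sub-root property, apply Lemma~\ref{lemma:modified-Talagrand}, solve for $r$, and integrate the tail. Two small points on the setup:
\begin{itemize}
\item You should condition on $X_1,\dots,X_n$ only, as your later text does. $T$ must stay random, since $\|\cdot\|_2$, the variance proxy and $\phi_{nm}$ are all taken under $P_T$; your opening remark about conditioning on $T_{ij}$ is only the justification of the mean-zero property.
\item Your contraction factor $2\beta_V$ is legitimate and sharper than the paper's $4\beta_V$. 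Because the multiplier does not depend on $K$, Kahane's contraction principle applies, not the Lipschitz comparison theorem.
\end{itemize}

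The step that fails as written is the union bound over shells with $x$ replaced by $x+\log(\ell+2)$. The failure probabilities add up to $e^{-x}\sum_\ell(\ell+2)^{-1}$, which diverges. Even over the roughly $\log_2(4B_K^2/r)$ nonempty shells it leaves a factor of order $\log\log(n\lfloor m/2\rfloor)$, so you do not get probability $1-e^{-x}$. Using $x+2\log(\ell+2)$ would repair it, but no union bound is needed. After normalization, the whole class $\{e_i^{(V)}(K-K^\circ)/(\|K-K^\circ\|_2^2+r):K\in\Kcal_{nm}\}$ already has the uniform bounds $U=4\beta_VB_K/r$ and $\sigma^2=\beta_V^2/r$, using $(a+r)^2\ge 4ar$. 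So the paper peels only inside $\Eb[\tilde S_n]$, with shell ratio $s=3+2\sqrt2$, which gives $\Eb[\tilde S_n]/(n\lfloor m/2\rfloor)\le 32\beta_V\phi_{nm}(r)/r$. It then applies Lemma~\ref{lemma:modified-Talagrand} once.

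Two bookkeeping slips remain.
\begin{itemize}
\item Your displayed choice of $r$ drops a factor $C_V$ on both $x$-terms. As written, the second and third terms of your bracket are only $O(C_V^{-1/2})$ and $O(1)$, not $O(C_V^{-1})$. You need $r\gtrsim C_V^2\beta_V^2r^\ast+(C_V^2\beta_V^2+C_V\beta_VB_K)\,x/(n\lfloor m/2\rfloor)$.
\item An argument with unspecified $c$ and $\asymp$ does not produce the explicit constants $3200$ and $26$. In the paper these come from the factor $32$ above together with the choice $\alpha=1/4$ in Lemma~\ref{lemma:modified-Talagrand}.
\end{itemize}
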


\begin{lemma}\label{lemma:CI4}
Let $e_i^{(W)}(\epsilon_{ij},\epsilon_{ik})
:=\mathcal{C}_{\beta_W}(\epsilon_{ij}\epsilon_{ik}) - \Eb\left[\mathcal{C}_{\beta_W}(\epsilon_{ij}\epsilon_{ik}) \right]$.
For any $C_V > 1/(40\sqrt{2}\beta_V)$, the following inequality holds:
\ba
&\Eb\left[\left|\frac{1}{nm(m-1)}\sum_{i=1}^n\sum_{j\neq k}
e_i^{(W)}(\epsilon_{ij},\epsilon_{ik})\left\{K(T_{ij},T_{ik})-  K^\circ(T_{ij},T_{ik})\right\}
\right|\right]\\
&\le
\frac{1}{C_W}\Eb\left[\|\what{K}_n-K^\circ\|_{2}^2\right] + 3200C_W^2\beta_W^2 r^\ast  + \frac{2\beta_W}{n\lfloor m/2 \rfloor}\left( C_W\beta_W+ 26B_k\right).
\ea
\end{lemma}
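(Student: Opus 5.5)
The plan is to prove a uniform, high-probability version of the bound over the whole class $\Kcal_{nm}$, and then integrate the tail. This takes care of the data dependence of $\Khat_n$, since we only ever plug $\Khat_n$ into a statement that holds simultaneously for all $K\in\Kcal_{nm}$. The key device is Lemma~\ref{lemma:modified-Talagrand} applied to a normalized class.

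\textbf{Step 1: normalized class.} Put $X_{ij}:=(T_{ij},\epsilon_{ij})$; these are i.i.d.\ in $\Scal=\Tcal\times\Rb$. For $r>0$ and $K\in\Kcal_{nm}$ define
\[
g_K(x,x') := \frac{e^{(W)}(\epsilon,\epsilon')\{K(t,t')-K^\circ(t,t')\}}{\|K-K^\circ\|_2^2+r},
\]
with the same function used for every $i$. This class has the properties Lemma~\ref{lemma:modified-Talagrand} needs.
\begin{itemize}
\item Mean zero: $\Eb[g_K(X_{i1},X_{i2})]=0$, because $T$ and $\epsilon$ are independent and $\Eb[e^{(W)}]=0$ by construction.
\item Sup bound: $\|g_K\|_\infty\le 2\beta_W\cdot 2B_K/r$.
\item Variance bound: $\Eb[g_K^2]\le 4\beta_W^2\|K-K^\circ\|_2^2/(\|K-K^\circ\|_2^2+r)^2\le \beta_W^2/r$, using $ab/(a+b)^2\le 1/4$ and $|e^{(W)}|\le 2\beta_W$.
\end{itemize}
Countability can be handled by a separability argument; for neural network classes, restrict to rational weights.

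\textbf{Step 2: expected supremum by peeling.} We bound $\Eb[\tilde S_n]/(n\lfloor m/2\rfloor)$, the decoupled sum of Lemma~\ref{lemma:modified-Talagrand}, as follows.
\begin{itemize}
\item Peel $\Kcal_{nm}$ into the shell $\Kcal_{nm}(r)$ and the shells $\Kcal_{nm}(4^{l}r)\setminus\Kcal_{nm}(4^{l-1}r)$ for $l\ge1$.
\item On each shell, symmetrize. The decoupled pairs $(X_{ij},X_{i(\lfloor m/2\rfloor+j)})$ are independent across $(i,j)$, so the standard symmetrization applies.
\item Condition on the $\epsilon$'s and apply the Ledoux--Talagrand contraction with the maps $u\mapsto e^{(W)}_{ij}u$, which are $2\beta_W$-Lipschitz and vanish at $0$. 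This removes the noise multiplier at the cost of a factor $4\beta_W$.
\item The shell at level $4^l r$ is then controlled by $4\beta_W\phi_{nm}(4^lr)$, and the normalization contributes the factor $1/(4^{l-1}r)$.
\item Since $\phi_{nm}$ is sub-root, $\phi_{nm}(4^lr)\le 2^l\sqrt{r/r^\ast}\,\phi_{nm}(r^\ast)\le 2^l\sqrt{rr^\ast}$ for $r\ge r^\ast$.
\end{itemize}
Summing the geometric series gives
\[
\Eb\!\left[\frac{\tilde S_n}{n\lfloor m/2\rfloor}\right]\le c\,\beta_W\sqrt{r^\ast/r}.
\]

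\textbf{Step 3: concentration and choice of $r$.} Lemma~\ref{lemma:modified-Talagrand} with $\alpha=1$ then gives, with probability at least $1-e^{-x}$, that the normalized supremum is at most
\[
c\beta_W\sqrt{r^\ast/r}+\sqrt{\frac{2\beta_W^2x}{r\,n\lfloor m/2\rfloor}}+\frac{cB_K\beta_W x}{r\,n\lfloor m/2\rfloor}.
\]
Choose
\[
r \asymp C_W^2\beta_W^2r^\ast+\frac{C_W\beta_W(C_W\beta_W+B_K)x}{n\lfloor m/2\rfloor},
\]
so that each of the three terms is at most $1/(3C_W)$. The stated lower bound on $C_W$ guarantees $r\ge r^\ast$, which Step 2 requires. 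On this event, for every $K\in\Kcal_{nm}$, the absolute value inside the expectation is at most $(\|K-K^\circ\|_2^2+r)/C_W$. Apply this at $K=\Khat_n$.

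\textbf{Step 4: integrate the tail.} Integrating over $x$, using $\Eb[\text{term}]\le\int_0^\infty P(\text{term}>a+bx)\,dx\cdot b+a$, turns the $x$-term into its coefficient. This yields
\[
\frac{1}{C_W}\Eb\|\Khat_n-K^\circ\|_2^2+c\,C_W^2\beta_W^2r^\ast+\frac{c\beta_W(C_W\beta_W+B_K)}{n\lfloor m/2\rfloor},
\]
and tracking the constants should give $3200$ and $26$.

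\textbf{Main obstacle.} The delicate step is Step 2. The Rademacher fixed point is defined through the decoupled pairs without noise, whereas Lemma~\ref{lemma:modified-Talagrand} produces the decoupled sum with the multiplier $e^{(W)}$. The contraction has to be applied conditionally on the noise. This is valid because, given the $\epsilon$'s, the $T$-pairs remain i.i.d.\ and independent of the noise.
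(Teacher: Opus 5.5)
Your proposal is correct and follows essentially the same route as the paper, which proves this lemma by repeating the argument for Lemma~\ref{lemma:CI3}. The shared steps are: the normalized class with sup bound $4B_K\beta_W/r$ and variance bound $\beta_W^2/r$; symmetrization plus contraction conditional on the noise (factor $4\beta_W$); peeling via the sub-root property; Lemma~\ref{lemma:modified-Talagrand}; solving for $r\ge r^\ast$; and integrating the tail, which the paper leaves implicit and your Step 4 makes explicit. Only the bookkeeping differs. With $\alpha=1$ (the paper takes $\alpha=1/4$) your constants will not come out as $3200$. Also, dividing your $r$ by $C_W$ gives $C_W\beta_W^2r^\ast$ rather than $C_W^2\beta_W^2r^\ast$, which implies the stated form only when $C_W\ge 1$; the paper's own derivation has the same slip, and it is harmless where the lemma is applied.
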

\begin{proof}
The proof of this lemma is omitted, as it is essentially the same as that of Lemma~\ref{lemma:CI3}.
\end{proof}

\section{Proofs of technical lemmas}\label{app:proofs-tech-lemmas}

\subsection{Proof of Lemma~\ref{lemma:CI1}}

\begin{proof}[Proof of Lemma~\ref{lemma:CI1}]
The proof follows the localization analysis (\citealp{BartlettEtAl2005,BlanchardEtAl2008}) 
combining with Lemma~\ref{lemma:modified-Talagrand}.

First, we apply the peeling device (e.g., \cite{vanDerGeer2000}) to derive a tight inequality.
We consider the usual symmetrization technique (e.g., \citealp{vanDerVaartWellner1996}).
Let $\{T_{ij}'\}$ be an independent copy of $\{T_{ij}\}$.
For simplicity of notation, let $j_1=j$ and $j_2:= \lfloor m/2 \rfloor+j$.
We have
\ba
&\int_{\Tcal \times \Tcal} (K-K^\circ)^2\,d(P_T\otimes P_T) 
- \left\{K(T_{ij_1},T_{ij_2}) -  K^\circ(T_{ij_1},T_{ij_2})\right\}^2\\
&=
\mathbb{E}_{T'}\left[ 
\{ K(T_{ij_1}',T_{ij_2}')-K^\circ(T_{ij_1}',T_{ij_2}')\}^2 
- \{K(T_{ij_1},T_{ij_2}) -  K^\circ(T_{ij_1},T_{ij_2})\}^2
\right].
\ea
Hence, 
we have
{\small
\ba
&\frac{\int_{\Tcal \times \Tcal} (K-K^\circ)^2\,d(P_T\otimes P_T) 
- \left\{K(T_{ij_1},T_{ij_2})-  K^\circ(T_{ij_1},T_{ij_2})\right\}^2}
{\int_{\Tcal \times \Tcal} (K-K^\circ)^2\,d(P_T\otimes P_T) + r}\\
&=
\mathbb{E}_{T'}\left[
\frac{ \left\{K(T_{ij_1}',T_{ij_2}') -  K^\circ(T_{ij_1}',T_{ij_2}')\right\}^2- \left\{K(T_{ij_1},T_{ij_2}) -  K^\circ(T_{ij_1},T_{ij_2})\right\}^2}
{\int_{\Tcal \times \Tcal} (K-K^\circ)^2\,d(P_T\otimes P_T) + r}
\right].
\ea
}
Therefore, we have
{\footnotesize
\ba
&\sup_{K\in \Kcal}\left|
\sum_{i = 1}^n  \sum_{j=1}^{\lfloor m/2 \rfloor} 
\frac{\int (K-K^\circ)^2\,d(P_T\otimes P_T) - \left\{K(T_{ij_1},T_{ij_2}) -  K^\circ(T_{ij_1},T_{ij_2})\right\}^2}
{\int_{\Tcal \times \Tcal} (K-K^\circ)^2\,d(P_T\otimes P_T) + r}
 \right|\\
 &\le 
 \sup_{K\in \Kcal}\left|
\mathbb{E}_{T'}\left[
\sum_{i = 1}^n  \sum_{j=1}^{\lfloor m/2 \rfloor} 
\frac{ \left\{K(T_{ij_1}',T_{ij_2}') -  K^\circ(T_{ij_1}',T_{ij_2}')\right\}^2- \left\{K(T_{ij_1},T_{ij_2}) -  K^\circ(T_{ij_1},T_{ij_2})\right\}^2}
{\int_{\Tcal \times \Tcal} (K-K^\circ)^2\,d(P_T\otimes P_T) + r}
\right]
\right|\\
&\le
\mathbb{E}_{T'}\left[ \sup_{K\in \Kcal}\left|
\sum_{i = 1}^n  \sum_{j=1}^{\lfloor m/2 \rfloor} 
\frac{ \left\{K(T_{ij_1}',T_{ij_2}') -  K^\circ(T_{ij_1}',T_{ij_2}')\right\}^2- \left\{K(T_{ij_1},T_{ij_2}) -  K^\circ(T_{ij_1},T_{ij_2})\right\}^2}
{\int_{\Tcal \times \Tcal} (K-K^\circ)^2\,d(P_T\otimes P_T) + r}
\right|\right],
\ea
}
and we obtain 
{\footnotesize
\ba
&\Eb_T\left[ 
\sup_{K\in \Kcal}\left|
\sum_{i = 1}^n  \sum_{j=1}^{\lfloor m/2 \rfloor} 
\frac{\int_{\Tcal \times \Tcal} (K-K^\circ)^2\,d(P_T\otimes P_T) - \left\{K(T_{ij_1},T_{ij_2})-  K^\circ(T_{ij_1},T_{ij_2})\right\}^2}
{\int_{\Tcal \times \Tcal} (K-K^\circ)^2\,d(P_T\otimes P_T) + r}
 \right|
\right]\\
&\le
\mathbb{E}_{T,T'}\left[ \sup_{K\in \Kcal}\left|
\sum_{i = 1}^n  \sum_{j=1}^{\lfloor m/2 \rfloor} 
\frac{ \left\{K(T_{ij_1}',T_{ij_2}') -  K^\circ(T_{ij_1}',T_{ij_2}')\right\}^2- \left\{K(T_{ij_1},T_{ij_2}) -  K^\circ(T_{ij_1},T_{ij_2})\right\}^2}
{\int_{\Tcal \times \Tcal} (K-K^\circ)^2\,d(P_T\otimes P_T) + r}
\right|\right]\\
&\le
2\mathbb{E}_{T,T',\sigma}\left[ \sup_{K\in \Kcal}\left|
\sum_{i = 1}^n  \sum_{j=1}^{\lfloor m/2 \rfloor} 
\sigma_{ij} \frac{ \{K(T_{ij_1},T_{ij_2}) -  K^\circ(T_{ij_1},T_{ij_2})\}^2}
{\|K-K^\circ\|_{2}^2 + r}
 \right|\right],
\ea
}
where $\{\sigma_{ij}\}$ is a Rademacher sequence, which is independent from $\{T_{ij}\}$.

Let $r$ be a real number not less than $r^\ast$ (i.e., $r\ge r^\ast$).
Then, for $s>1$, since
\[
\Kcal = \Kcal(r) \cup \left( \bigcup_{k=1}^\infty \{ \Kcal(s^k r)\setminus \Kcal(s^{k-1}r)\}\right),
\]
we have
{\small 
\ba
&\mathbb{E}_{T,T',\sigma}\left[ \sup_{K\in \Kcal}\left|
\sum_{i = 1}^n  \sum_{j=1}^{\lfloor m/2 \rfloor} 
\sigma_{ij} \frac{ \{K(T_{ij_1},T_{ij_2}) -  K^\circ(T_{ij_1},T_{ij_2})\}^2}
{\|K-K^\circ\|_{2}^2 + r}
 \right|\right]\\
&\le
\mathbb{E}_{T,T',\sigma}\left[ \sup_{K\in \Kcal(r)}\left|
\sum_{i = 1}^n  \sum_{j=1}^{\lfloor m/2 \rfloor} 
\sigma_{ij} \frac{ \{K(T_{ij_1},T_{ij_2}) -  K^\circ(T_{ij_1},T_{ij_2})\}^2}
{\|K-K^\circ\|_{2}^2 + r}
 \right|\right]\\
&\quad +
\sum_{l=1}^\infty \mathbb{E}_{T,T',\sigma}\left[ \sup_{K\in  \Kcal(s^l r)\setminus \Kcal(s^{l-1}r)}\left|
\sum_{i = 1}^n  \sum_{j=1}^{\lfloor m/2 \rfloor} 
\sigma_{ij} \frac{ \{K(T_{ij_1},T_{ij_2}) -  K^\circ(T_{ij_1},T_{ij_2})\}^2}
{\|K-K^\circ\|_{2}^2 + r}
\right|\right]\\
&\le
\frac{1}{r}\mathbb{E}_{T,T',\sigma}\left[ \sup_{K\in \Kcal(r)}\left|
\sum_{i = 1}^n  \sum_{j=1}^{\lfloor m/2 \rfloor} 
\sigma_{ij} \{K(T_{ij_1},T_{ij_2}) -  K^\circ(T_{ij_1},T_{ij_2})\}^2
 \right|\right]\\
&\quad +
\sum_{l=1}^\infty \frac{1}{(s^{l-1}+1)r}\,
\mathbb{E}\left[ \sup_{K\in  \Kcal(s^l r)\setminus \Kcal(s^{l-1}r)}\left|
\sum_{i = 1}^n  \sum_{j=1}^{\lfloor m/2 \rfloor} 
\sigma_{ij} \{K(T_{ij_1},T_{ij_2}) -  K^\circ(T_{ij_1},T_{ij_2})\}^2
 \right|\right] 
\ea
}
Moreover, by the comparison theorem (Theorem 4.2 in \cite{}),
for all $r\ge r^\ast$,
\ba
&\mathbb{E}\left[ \sup_{K\in \Kcal(r)}\left|
\frac{1}{n\lfloor m/2 \rfloor}\sum_{i = 1}^n  \sum_{j=1}^{\lfloor m/2 \rfloor} 
\sigma_{ij} \left\{K(T_{ij_1},T_{ij_2}) -  K^\circ(T_{ij_1},T_{ij_2})\right\}^2
\right|\right]\\
&\le
4B_K  
\Eb\left[ \sup_{K\in \Kcal(r)}\left|
\frac{1}{n\lfloor m/2 \rfloor}\sum_{i = 1}^n  \sum_{j=1}^{\lfloor m/2 \rfloor} 
\sigma_{ij} \left\{K(T_{ij_1},T_{ij_2}) -  K^\circ(T_{ij_1},T_{ij_2})\right\}
\right|\right]\\
&\le
4B_K\phi(r)
\ea
holds.

Combining these bounds, we obtain
\ba
&\frac{4B_K \phi(r)}{r} + \sum_{l=1}^\infty \frac{4B_K\phi(s^l r)}{(s^{l-1}+1)r}
=\frac{4B_K}{r}\left\{ \phi(r)+ \sum_{l=1}^\infty \frac{\phi(s^l r)}{s^{l-1}+1} \right\}\\
&\le 
\frac{4B_K}{r}\left\{ \phi(r)+ \sum_{l=1}^\infty \frac{s^{l/2}\phi(r)}{s^{l-1}+1} \right\}
=
\frac{4B_K\phi(r)}{r}\left\{ 1+ \sum_{l=0}^\infty \frac{s^{(l+1)/2}}{s^{l}+1} \right\}\\
&\le
\frac{4B_K\phi(r)}{r}\left\{ 1+ s^{1/2}\left( \frac{1}{2} + \sum_{l=1}^\infty s^{-l/2}\right) \right\}
=
\frac{4B_K\phi(r)}{r}\left\{ 1+ s^{1/2}\left( \frac{1}{2} + \frac{1}{s^{1/2}-1}\right) \right\}
\ea
Here, the right-hand side is minimized at $s = 3 + 2\sqrt{2}$, and thus the upper bound satisfies
\[
\frac{4B_K\phi(r)}{r}\frac{5+2\sqrt{2}}{2}
\le
\frac{4B_K\phi(r)}{r}\frac{8}{2} = \frac{16B_K\phi(r)}{r}.
\]

Therefore,
\ba
&\Eb_T\left[ 
\sup_{K\in \Kcal}\left|
\frac{1}{n\lfloor m/2 \rfloor}
\sum_{i = 1}^n  \sum_{j=1}^{\lfloor m/2 \rfloor} 
\frac{\|K-K^\circ\|_{2}^2  - \left\{K(T_{ij_1},T_{ij_2})-  K^\circ(T_{ij_1},T_{ij_2})\right\}^2}
{\|K-K^\circ\|_{2}^2  + r}
 \right|
\right]\\
&\le 
\frac{32B_K \phi(r)}{r}
\ea
holds.

Next, we verify the conditions needed to apply the modified Talagrand inequality (Lemma~\ref{lemma:modified-Talagrand}).
Define
\[
f_{K}(\cdot,\star)
:=
\frac{\|K-K^\circ\|_{2}^2 - \left\{K(\cdot,\star) -  K^\circ(\cdot,\star)\right\}^2}
{\|K-K^\circ\|_{2}^2 + r},
\]
and set $\mathcal{F}:=\{f_K\mid K\in \Kcal\}$.
First, it is clear that $\Eb\left[f_{K}(T_{i1},T_{i2}) \right] = 0$.
By the arithmetic--geometric mean inequality, for $j\neq k$,
\ba
\Eb\left[f_{K}^2(T_{ij},T_{ik})\right]
&=\Eb\left[
\left| \frac{\|K-K^\circ\|_{2}^2 - \left\{K(T_{ij},T_{ik}) -  K^\circ(T_{ij},T_{ik})\right\}^2}
{\|K-K^\circ\|_{2}^2 + r} \right|^2
\right]\\
&\le \frac{\Eb\left[\left\{K(T_{ij},T_{ik}) -  K^\circ(T_{ij},T_{ik})\right\}^4\right]}
{4r\|K-K^\circ\|_{2}^2}\\
&\le \frac{\Eb\left[(2B_K)^2\left\{K(T_{ij},T_{ik}) -  K^\circ(T_{ij},T_{ik})\right\}^2\right]}
{4r\|K-K^\circ\|_{2}^2}
\le 
\frac{B_K^2}{r},
\ea
and hence we can take $\sigma^2:= B_K^2/r$.
Also, we have
\ba
\left|f_{K}(T_{ij},T_{ik}) \right|
&\le
\left|\frac{\|K-K^\circ\|_{2}^2 - \left\{K(T_{ij},T_{ik}) -  K^\circ(T_{ij},T_{ik})\right\}^2}
{\|K-K^\circ\|_{2}^2 + r}\right|
\le
\frac{4B_K^2}{r}=:U.
\ea
Therefore, for all $x \ge 0$, with probability at least $1-e^{-x}$, the following holds:
{\small
\ba
&\sup_{K\in \Kcal} \left| \frac{1}{nm(m-1)}\sum_{i=1}^n \sum_{j\neq k}f_{K}(T_{ij},T_{ik})  \right| \\
&\le (1+\alpha)\Eb\left[ \sup_{K\in \Kcal}\left| \frac{1}{n\lfloor m/2 \rfloor}\sum_{i = 1}^n  \sum_{j=1}^{\lfloor m/2 \rfloor} f_K\big(T_{\pi(j)},T_{\pi(\lfloor m/2 \rfloor+j)}\big)\right|\right]\\
&\quad+ \sqrt{\frac{2B_K^2}{n\lfloor m/2 \rfloor}\frac{x}{r}} + \left( \frac{1}{3}+\frac{1}{\alpha} \right)\frac{4B_K^2}{r}\frac{x}{n\lfloor m/2 \rfloor}\\
&\le
(1+\alpha)\frac{32B_K \phi(r)}{r}+ \sqrt{\frac{2B_K^2}{n\lfloor m/2 \rfloor}\frac{x}{r}} 
+ \left( \frac{3}{2}+\frac{1}{\alpha} \right)\frac{4B_K^2}{r}\frac{x}{n\lfloor m/2 \rfloor}.
\ea
}
That is, for all $x \ge 0$, with probability at least $1-e^{-x}$,
the following holds:
\ba
\forall K \in \Kcal ;\;
&\left|\frac{\|K-K^\circ\|_{2}^2 - \frac{1}{nm(m-1)}\sum_{i=1}^n \sum_{j\neq k}\left\{K(T_{ij},T_{ik}) - K^\circ(T_{ij},T_{ik})\right\}^2}
{\|K-K^\circ\|_{2}^2 + r}\right|\\
&\le
\inf_{\alpha >0}\left\{
32(1+\alpha)B_K\sqrt{\frac{r^\ast}{r}}+ \sqrt{\frac{2B_K^2}{n\lfloor m/2 \rfloor}\frac{x}{r}} 
+ \left( \frac{3}{2}+\frac{1}{\alpha} \right)\frac{4B_K^2}{r}\frac{x}{n\lfloor m/2 \rfloor}
 \right\}.
\ea

Let
\[
A_1 := 32(1+\alpha)B_K\sqrt{r^\ast}+ \sqrt{\frac{2B_K^2x}{n\lfloor m/2 \rfloor}},\quad
A_2:= \left( \frac{3}{2}+\frac{1}{\alpha} \right)\frac{4B_K^2x}{n\lfloor m/2 \rfloor}.
\]
Then it suffices to find $r$ satisfying
\ba
A_1 \frac{1}{\sqrt{r}} + A_2\frac{1}{r} \le \frac{1}{C}.
\ea
It is enough to find an $r$ such that $r \ge C^2 A_1^2  + 2CA_2$.
Since we have
\ba
C^2 A_1^2  + 2CA_2
\le 
2(32)^2(1+\alpha)^2C^2B_K^2r^\ast + 4B_K^2\frac{x}{n\lfloor m/2 \rfloor}\left\{C^2 + 2C \left( \frac{3}{2}+\frac{1}{\alpha} \right) \right\},
\ea
and then taking $\alpha = 1/4$ yields that the above condition is satisfied for any $r$ such that
\ba
2B_K^2\left\{ (32)^2(5/4)^2C^2 r^\ast  + \frac{x}{n\lfloor m/2 \rfloor}(C^2 + 22C) \right\}\le r.
\ea
Note that if $C > 1/(40\sqrt{2}B_K)$, then $r > r^\ast$ holds.
Therefore, for any $x>0$ and any $C > 1/(40\sqrt{2}B_K)$, with probability at least $1-e^{-x}$,
the following holds:
\ba
\forall K \in \Kcal ;\;
&\left| \|K-K^\circ\|_{2}^2 - \frac{1}{nm(m-1)}\sum_{i=1}^n \sum_{j\neq k}\left\{K(T_{ij},T_{ik}) - K^\circ(T_{ij},T_{ik})\right\}^2\right|\\
&\le
\frac{1}{C}\|K-K^\circ\|_{2}^2 + 3200CB_K^2 r^\ast  + \frac{2B_K^2x}{n\lfloor m/2 \rfloor}(C + 22).
\ea
\end{proof}

\subsection{Proof of Lemma~\ref{lemma:CI2}}

\begin{proof}[Proof of Lemma~\ref{lemma:CI2}]
First, we work conditionally on the stochastic processes $X_1,\dots, X_n$.
Under this conditioning, $e_i$ can be regarded as a fixed function.
Let
\[
f_i(\cdot,\star) 
:= \frac{\tilde{e}_{i}(\cdot,\star)\left\{K(\cdot,\star) - K^\circ(\cdot,\star)\right\} -\la \tilde{e}_i, K-\Ktrue \ra}{\|K-\Ktrue\|_2^2 + r}.
\]
We verify the conditions needed to apply the modified Talagrand inequality (Lemma~\ref{lemma:modified-Talagrand}).
By definition, for $j\neq k$, we have $\Eb_{T}[ f_i(T_{ij},T_{ik})] = 0$.
Moreover,
\ba
|f_i(T_{ij},T_{ik})|
&=
\left|
 \frac{\tilde{e}_{i}(T_{ij},T_{ik})\left\{K(T_{ij},T_{ik}) - K^\circ(T_{ij},T_{ik})\right\} -\la \tilde{e}_i, K-\Ktrue \ra}{\|K-\Ktrue\|_2^2 + r}
\right|
\le
\frac{4\beta_U B_K}{r},
\\
\Eb_{T}[f_i^2(T_{ij},T_{ik})] 
&=
\Eb_T\left[
\left|
 \frac{\tilde{e}_{i}(T_{ij},T_{ik})\left\{K(T_{ij},T_{ik}) - K^\circ(T_{ij},T_{ik})\right\} -\la \tilde{e}_i, K-\Ktrue \ra}{\|K-\Ktrue\|_2^2 + r}
\right|^2
\right]
\\
&\le
\frac{\beta_U^2}{4r\|K-\Ktrue\|_2^2}
\Eb_T\left[
\left|K(T_{ij},T_{ik}) - K^\circ(T_{ij},T_{ik})\right|^2
\right]
=  \frac{\beta_U^2}{4r}.
\ea
By Lemma~\ref{lemma:modified-Talagrand}, for any $x\ge 0$ and any $\alpha>0$, with probability at least $1-e^{-x}$,
\ba
&\sup_{f\in \Fcal} \left| \frac{1}{nm(m-1)}\sum_{i=1}^n \sum_{j\neq k}f_{i}(T_{ij},T_{ik})  \right| \\
&\le (1+\alpha)\Eb_{T\mid X}\left[ \sup_{f\in \Fcal}\left| \frac{1}{n\lfloor m/2 \rfloor}\sum_{i = 1}^n  \sum_{j=1}^{\lfloor m/2 \rfloor} 
f_i\big(T_{ij},T_{i(\lfloor m/2 \rfloor+j)}\big)\right|\right] \\
&\quad+
\sqrt{\frac{1}{n\lfloor m/2 \rfloor}\frac{\beta_U^2}{2r}x}
+\left( \frac{3}{2} + \frac{1}{\alpha}\right)\frac{4\beta_UB_K}{rn\lfloor m/2 \rfloor}x
\ea
holds.

By the symmetrization argument, we have
\ba
&\Eb_{T\mid X}\left[ \sup_{K\in \Kcal} \left|\sum_{i=1}^n\sum_{j=1}^{\lfloor m/2 \rfloor}\frac{\tilde{e}_{i}(T_{ij_1},T_{ij_2})
\left\{K(T_{ij_1},T_{ij_2})-  K^\circ(T_{ij_1},T_{ij_2})\right\}- \la \tilde{e}_i, K-\Ktrue \ra}{\|K-\Ktrue\|_2^2 + r}\right|\right] \\
&\le
2\Eb_{T\mid X}\left[ \sup_{K\in \Kcal} \left|\sum_{i=1}^n\sum_{j=1}^{\lfloor m/2 \rfloor}
\frac{\sigma_{ij}\tilde{e}_{i}(T_{ij_1},T_{ij_2})\left\{K(T_{ij_1},T_{ij_2})-  K^\circ(T_{ij_1},T_{ij_2})\right\}}
{\|K-\Ktrue\|_2^2 + r}\right|\right].
\ea
By the comparison theorem, it follows that
\ba
&\Eb_{T\mid X}\left[ \sup_{K\in \Kcal(r)} \left|\frac{1}{n\lfloor m/2 \rfloor}\sum_{i=1}^n\sum_{j=1}^{\lfloor m/2 \rfloor}
\sigma_{ij}\tilde{e}_{i}(T_{ij_1},T_{ij_2})\left\{K(T_{ij_1},T_{ij_2})-  K^\circ(T_{ij_1},T_{ij_2})\right\}\right|\right] \\
&\le
2\beta_U
\Eb\left[ \sup_{K\in \Kcal(r)} \left|\frac{1}{n\lfloor m/2 \rfloor}\sum_{i=1}^n\sum_{j=1}^{\lfloor m/2 \rfloor}
\sigma_{ij}\left\{K(T_{ij_1},T_{ij_2})-  K^\circ(T_{ij_1},T_{ij_2})\right\}\right|\right]
=2\beta_U\phi(r).
\ea
As in Lemma~\ref{lemma:CI1}, we obtain
\ba
&\Eb_{T\mid X}\left[ \sup_{K\in \Kcal} \left|\frac{1}{n\lfloor m/2 \rfloor}\sum_{i=1}^n\sum_{j=1}^{\lfloor m/2 \rfloor}\frac{\tilde{e}_{i}(T_{ij_1},T_{ij_2})
\left\{K(T_{ij_1},T_{ij_2})-  K^\circ(T_{ij_1},T_{ij_2})\right\}- \la \tilde{e}_i, K-\Ktrue \ra}{\|K-\Ktrue\|_2^2 + r}\right|\right] \\
&\le 
\frac{16\beta_U\phi(r)}{r}.
\ea

Therefore, the upper bound is given by
\[
16(1+\alpha)\beta_U\sqrt{\frac{r^\ast}{r}}
+\sqrt{\frac{1}{n\lfloor m/2 \rfloor}\frac{\beta_U^2}{2r}x}
+\left( \frac{3}{2} + \frac{1}{\alpha}\right)\frac{4\beta_UB_K}{rn\lfloor m/2 \rfloor}x.
\]
Consequently, as in Lemma~\ref{lemma:CI1},
\ba
C^2A_1^2 +2CA_2 
&\le 
2(16)^2(1+\alpha)^2C^2\beta_U^2 r^\ast + \frac{\beta_U^2}{n\lfloor m/2 \rfloor}C^2x
+2C\left( \frac{3}{2} + \frac{1}{\alpha}\right)\frac{4\beta_UB_K}{n\lfloor m/2 \rfloor}x \\
&=
2(16)^2(1+\alpha)^2C^2\beta_U^2 r^\ast 
+ \frac{\beta_Ux}{n\lfloor m/2 \rfloor }\left\{ C^2\beta_U+ 8CB_k\left( \frac{3}{2} + \frac{1}{\alpha}\right) \right\}
\ea
and hence, taking $\alpha = 1/4$, the upper bound becomes $1/C$ for any $r$ satisfying
\[
2(20)^2C^2\beta_U^2 r^\ast  + \frac{\beta_Ux}{n\lfloor m/2 \rfloor }\left( C^2\beta_U+ 44CB_k\right)\le r.
\]
If $C > 1/(20\sqrt{2}\beta_U)$, then $r>r^\ast$ holds.
Therefore, for any $x>0$ and any $C > 1/(20\sqrt{2}\beta_U)$, with probability at least $1-e^{-x}$,
\ba
\forall K \in \Kcal;\;
&\left|\frac{1}{n\lfloor m/2 \rfloor}\sum_{i=1}^n\sum_{j=1}^{\lfloor m/2 \rfloor}\tilde{e}_{i}(T_{ij},T_{ik})\left\{K(T_{ij},T_{ik}) - K^\circ(T_{ij},T_{ik})\right\} -\la \tilde{e}_i, K-\Ktrue \ra\right| \\
&\le
\frac{1}{C}\|K-K^\circ\|_{2}^2 + 800C^2\beta_U^2 r^\ast  + \frac{\beta_Ux}{n\lfloor m/2 \rfloor }\left( C\beta_U+ 44B_k\right)
\ea
holds.
\end{proof}

\subsection{Proof of Lemma~\ref{lemma:CI3}}

\begin{proof}[Proof of Lemma~\ref{lemma:CI3}]
We first condition on the stochastic process $X_1,\dots,X_n$. 
Also, let $Z_{ij} := (T_{ij},\epsilon_{ij})$, and define
\ba
f_i(Z_{ij},Z_{ik})
:=
\frac{e_i^{(V)}(T_{ij},\epsilon_{ik})\left\{K(T_{ij},T_{ik}) - K^\circ(T_{ij},T_{ik})\right\}}
{\|K-K^\circ\|_{2}^2+r}.
\ea
For $j\neq k$, we have $\mathbb{E}_b[f_i(Z_{ij},Z_{ik})] = 0$ and
\ba
\left| f_i(Z_{ij},Z_{ik})\right|
&=
\left|
\frac{e_i^{(V)}(T_{ij},\epsilon_{ik})\left\{K(T_{ij},T_{ik}) - K^\circ(T_{ij},T_{ik})\right\}}
{\|K-K^\circ\|_{2}^2+r}
\right|
\le
\frac{4B_K\beta_V}{r},\\
\mathbb{E}_b\left[\left| f_i(Z_{ij},Z_{ik})\right|^2 \right]
&=
\mathbb{E}\left[
\left| \frac{e_i^{(V)}(T_{ij},\epsilon_{ik})\left\{K(T_{ij},T_{ik}) - K^\circ(T_{ij},T_{ik})\right\}}{\|K-K^\circ\|_{2}^2+r}\right|^2
\right]\\
&\le
\frac{4\beta_V^2}{4r\|K-K^\circ\|_{2}^2}\mathbb{E}\left[\left|K(T_{ij},T_{ik}) - K^\circ(T_{ij},T_{ik})\right|^2\right]
= \frac{\beta_V^2}{r}.
\ea
Further, noting that $\mathbb{E}_b\bigl[e_i^{(V)}(T_{ij},\epsilon_{ik})\mid X_i, T_{ij}\bigr] = 0$,
the symmetrization argument yields
\ba
&\mathbb{E}_{Z\mid X}\left[ \sup_{K\in \Kcal} \left|\sum_{i=1}^n\sum_{j=1}^{\lfloor m/2 \rfloor}
\frac{e_i^{(V)}(T_{ij_1},\epsilon_{ij_2})\left\{K(T_{ij_1},T_{ij_2})-  K^\circ(T_{ij_1},T_{ij_2})\right\}}{\|K-\Ktrue\|_2^2 + r}\right|\right]\\
&\le
2\mathbb{E}_{Z\mid X}\left[ \sup_{K\in \Kcal} \left|\sum_{i=1}^n\sum_{j=1}^{\lfloor m/2 \rfloor}
\frac{\sigma_{ij}e_i^{(V)}(T_{ij_1},\epsilon_{ij_2})\left\{K(T_{ij_1},T_{ij_2})-  K^\circ(T_{ij_1},T_{ij_2})\right\}}
{\|K-\Ktrue\|_2^2 + r}\right|\right].
\ea
By the comparison theorem,
\ba
&\mathbb{E}\left[ \sup_{K\in \Kcal(r)} \left|\frac{1}{n\lfloor m/2 \rfloor}\sum_{i=1}^n\sum_{j=1}^{\lfloor m/2 \rfloor}
\sigma_{ij}e_i^{(V)}(T_{ij_1},\epsilon_{ij_2})\left\{K(T_{ij_1},T_{ij_2})-  K^\circ(T_{ij_1},T_{ij_2})\right\}\right|\right]\\
&\le
4\beta_V
\mathbb{E}\left[ \sup_{K\in \Kcal(r)} \left|\frac{1}{n\lfloor m/2 \rfloor}\sum_{i=1}^n\sum_{j=1}^{\lfloor m/2 \rfloor}
\sigma_{ij}\left\{K(T_{ij_1},T_{ij_2})-  K^\circ(T_{ij_1},T_{ij_2})\right\}\right|\right]
=4\beta_V\phi(r).
\ea
Following the proof of Lemma~\ref{lemma:CI1}, we obtain
\ba
&\mathbb{E}\left[ \sup_{K\in \Kcal} \left|\frac{1}{n\lfloor m/2 \rfloor}\sum_{i=1}^n\sum_{j=1}^{\lfloor m/2 \rfloor}
\frac{e_i^{(V)}(T_{ij_1},\epsilon_{ij_2})
\left\{K(T_{ij_1},T_{ij_2})-  K^\circ(T_{ij_1},T_{ij_2})\right\}}{\|K-\Ktrue\|_2^2 + r}\right|\right]
\le 
\frac{32\beta_V\phi(r)}{r}.
\ea

By Lemma~\ref{lemma:modified-Talagrand}, for any $x\ge 0$ and any $\alpha >0$, with probability at least $1-e^{-x}$,
\ba
\sup_{f\in \Fcal} \left| \frac{1}{n\lfloor m/2 \rfloor}\sum_{i=1}^n\sum_{j=1}^{\lfloor m/2 \rfloor}f_{i}(Z_{ij_1},Z_{i\pi_{2j}})  \right|
&\le (1+\alpha)\mathbb{E}_{Z\mid X}\left[ \sup_{f\in \Fcal}\left| \frac{1}{n\lfloor m/2 \rfloor}\sum_{i = 1}^n  \sum_{j=1}^{\lfloor m/2 \rfloor} 
f_i\bigl(T_{ij_1},T_{ij_2}\bigr)\right|\right]\\
&\quad+
\sqrt{\frac{1}{n\lfloor m/2 \rfloor}\frac{2\beta_V^2}{r}x}
+\left( \frac{3}{2} + \frac{1}{\alpha}\right)\frac{4\beta_VB_K}{rn\lfloor m/2 \rfloor}x
\ea
holds, and the upper bound is given by
\[
32(1+\alpha)\beta_U\sqrt{\frac{r^\ast}{r}}
+\sqrt{\frac{1}{n\lfloor m/2 \rfloor}\frac{2\beta_V^2}{r}x}
+\left( \frac{3}{2} + \frac{1}{\alpha}\right)\frac{4\beta_VB_K}{rn\lfloor m/2 \rfloor}x.
\]
Therefore, as in Lemma~\ref{lemma:CI1},
\ba
C^2A_1^2 +2CA_2 
&\le 
2(32)^2(1+\alpha)^2C^2\beta_U^2 r^\ast + \frac{2\beta_V^2}{n\lfloor m/2 \rfloor}C^2x
+2C\left( \frac{3}{2} + \frac{1}{\alpha}\right)\frac{4\beta_VB_K}{n\lfloor m/2 \rfloor}x\\
&=
2(32)^2(1+\alpha)^2C^2\beta_V^2 r^\ast 
+ \frac{2\beta_Vx}{n\lfloor m/2 \rfloor }\left\{ C^2\beta_V+ 4CB_k\left( \frac{3}{2} + \frac{1}{\alpha}\right) \right\}.
\ea
Hence, taking $\alpha = 1/4$, the above upper bound is at most $1/C$ for any $r$ satisfying
\[
2(40)^2C^2\beta_U^2 r^\ast  + \frac{\beta_Ux}{n\lfloor m/2 \rfloor }\left( C^2\beta_U+ 26CB_k\right)\le r.
\]
Here, if $C > 1/(40\sqrt{2}\beta_V)$, then $r>r^\ast$ holds. 
Therefore, for any $x>0$ and any $C > 1/(40\sqrt{2}\beta_V)$, with probability at least $1-e^{-x}$,
we obtain
\ba
\forall K \in \Kcal,\;\;
&\left|\frac{1}{n\lfloor m/2 \rfloor}\sum_{i=1}^n\sum_{j=1}^{\lfloor m/2 \rfloor}
e_i^{(V)}(T_{ij_1},\epsilon_{ij_2})\left\{K(T_{ij_1},T_{ij_2})-  K^\circ(T_{ij_1},T_{ij_2})\right\}\right|\notag\\
&\le
\frac{1}{C}\|K-K^\circ\|_{2}^2 + 3200C^2\beta_V^2 r^\ast  + \frac{2\beta_Vx}{n\lfloor m/2 \rfloor }\left( C\beta_V+ 26B_k\right).
\ea
This completes the proof.
\end{proof}

\end{appendix}

\begin{funding}
The first author was supported by JSPS KAKENHI Grants (JP24K14855, JP25K15032, JP25K21806).
The second author was supported in part by JST BOOST (JPMJBS2402).
\end{funding}



\bibliographystyle{imsart-nameyear} 
\bibliography{fda_paper}       


\end{document}